\documentclass[a4paper, 11pt, twoside, reqno]{amsart}
  \usepackage[a4paper,inner=2cm,outer=2cm,top=4cm,bottom=3.5cm]{geometry}
\usepackage{amsmath,amscd}
\usepackage{amssymb}
 \usepackage{amsthm}
\usepackage{comment}
\usepackage{mathrsfs}
\usepackage{graphicx, xcolor}
\usepackage{xcolor}
\usepackage{mathtools}
\usepackage{mathrsfs}
\usepackage[normalem]{ulem}
\usepackage[ocgcolorlinks, linkcolor=blue,citecolor=red]{hyperref}
\usepackage{orcidlink}

\usepackage{bm}
\usepackage{bbm}
\usepackage{url}

\usepackage[utf8]{inputenc}
\usepackage{mathtools,amssymb}
\usepackage{esint}

    \usepackage{amssymb}
\usepackage{tikz}
\usepackage{dsfont}
\usepackage{relsize}
\usepackage{url}
\usepackage{xcolor}
\usepackage{graphicx}
\usepackage{mathrsfs}
\usepackage[shortlabels]{enumitem}
\usepackage{lineno}
\usepackage{amsmath}
\usepackage{enumitem}
\usepackage{amsthm} 
\usepackage{verbatim}
\usepackage{dsfont}
\usepackage{tikz}
\numberwithin{equation}{section}

\allowdisplaybreaks

\mathtoolsset{showonlyrefs}

\graphicspath{{images/}}
\theoremstyle{plain}

\newtheorem{theorem}{Theorem}[section]
\newtheorem{lemma}[theorem]{Lemma}

\newtheorem{proposition}[theorem]{Proposition}

\newtheorem{remark}[theorem]{Remark}
\allowdisplaybreaks[0]

\title[ Semilinear wave equation]{Stable determination of damping and potential coefficients in a semilinear wave equation}

\author[R. Bhardwaj]{Rahul Bhardwaj\,\orcidlink{0009-0007-5122-7781}}
\author[M. Kumar]{Mandeep Kumar\,\orcidlink{0009-0004-9577-7759}}
\author[M. Vashisth]{Manmohan Vashisth\,\orcidlink{0000-0002-3417-4055}}
\address{{Department of Mathematics, Indian Institute of Technology Ropar, Rupnagar, Punjab-140001, INDIA.}}

\email{bhardwaj161067@gmail.com}
\email{mandeep.sansanwal@gmail.com}
\email{manmohanvashisth@iitrpr.ac.in}

\newcommand{\R}{{\mathbb R}}

\newcommand {\p} {\partial}

\newcommand{\norm}[1]{\lVert #1 \rVert}
\DeclareMathOperator{\supp}{supp} 

\begin{document}
	\begin{abstract}
We consider an inverse problem for a semilinear wave equation with time-independent damping, linear potential, and nonlinear potential coefficients in a bounded domain of $\mathbb{R}^n$ for $n\geq 2$. The main objective is to establish stability estimates for the simultaneous recovery of these coefficients from the associated Dirichlet-to-Neumann map. Our approach combines second-order linearization with suitably constructed geometric optics and asymptotic solutions. We establish H\"older-type stability estimates for the recovery of 
each of the three coefficients appearing in the semilinear wave equation under suitable a priori bounds on these coefficients. To the best of our knowledge, this is the first stability  result for simultaneous determination of time-independent damping, linear and nonlinear potentials in a semiliner  wave equation.

\medskip
\noindent{\bf Keywords.}  Wave Equation, inverse problems, stability, asymptotic solutions, attenuated ray transform.
		
		\noindent{\bf Mathematics Subject Classification (2020)}: 35R30, 35L10, 35B35, 35B40, 44A12

	\end{abstract}
	\maketitle
  \section{Introduction}
  
\subsection{Motivation and formulation of the inverse problem}

Inverse boundary value problems for hyperbolic equations seek to determine
unknown properties of a medium from measurements performed at its boundary.
These kind of  problems arise naturally in wave propagation, where spatially varying
coefficients may describe various physical quantities such as   dissipation, restoring forces, material
inhomogeneities, or nonlinear responses of the underlying medium.

In this paper, we study the stable simultaneous determination of the
coefficients appearing in a quadratic semilinear damped wave equation. More precisely,
let $\Omega\subset\mathbb{R}^n$, $n\geq2$, be a bounded connected domain
with smooth boundary $\partial\Omega$, and   for a fixed $T>0$,  we denote by $\Omega_T := (0,T)\times\Omega$, a space-time domain and $\Sigma := (0,T)\times\partial\Omega$, a lateral boundary of $\Omega_T$.
An inverse problem related to the  following  initial-boundary value problem (IBVP) for a semilinear wave equation with lower-order terms is  considered in the present paper.
\begin{equation}\label{equation; IBVP}
\begin{cases}
\Box u(t,x)+a(x)\partial_tu(t,x)+b(x)u(t,x)+q(x)u^2(t,x)=0,
&(t,x)\in\Omega_T,\\[1mm]
u(t,x)=f(t,x),
&(t,x)\in\Sigma,\\[1mm]
u(0,\cdot)=0,\qquad \partial_tu(0,\cdot)=0,
&~~~~~~~~~\quad x\in\Omega,
\end{cases}
\end{equation}

where
    $\Box:=\partial_t^2-\Delta_x$
denotes the standard wave operator. The
coefficient $a=a(x)$ represents the damping, while $b=b(x)$ and $q=q(x)$
denote  the linear and nonlinear potentials respectively. 

For sufficiently small Dirichlet data $f$ (see Theorem \ref{main_thm:welposedness}), let $u_f$ denote the unique
solution of \eqref{equation; IBVP}. Then the associated nonlinear
Dirichlet-to-Neumann (DN) map is given by
\begin{equation}\label{eq:DN-map}
    \Lambda_{a,b,q}(f)
    :=
    \left.\partial_\nu u_f\right|_{\Sigma},
\end{equation}
where
    $\partial_\nu u_f
    :=
    \nu(x)\cdot\nabla_xu_f$
and $\nu$ denotes the outward unit normal vector to $\partial\Omega$.
The precise class of sufficiently small admissible boundary values is
introduced in Section~\ref{sec:forward-problem}.

The inverse problem considered here is to determine the three coefficients
$a$, $b$, and $q$ from boundary measurements given by
\eqref{eq:DN-map}. Our main interest is the stability of the coefficients that is, we want to bound the difference between two sets of coefficients in terms of the difference between the corresponding boundary measurements.
\subsection{Physical background}

The hyperbolic nature of the principal part in the IBVP \eqref{equation; IBVP} reflects
fundamental properties of wave motion, in particular causality and
finite propagation speed, whereas the lower-order terms encode different
properties of the underlying medium. The damping
coefficient $a(x)$ accounts for spatially dependent attenuation or energy
dissipation, while the linear potential $b(x)$ represents a
position-dependent linear interaction. The term
   $ q(x)u^2$
describes a spatially varying nonlinear response of the medium. In particular,
the coefficient $q(x)$ determines the spatial strength of the nonlinear
interaction, while the power square describes its dependence on the wave
amplitude.

Equations of this form may be regarded as damped semilinear variants of the
Klein--Gordon equation and arise in mathematical models of wave propagation
in heterogeneous and dissipative media; see, for example,
\cite{morawetz1968time}. Related mechanisms appear in acoustics, elasticity,
seismology, and other areas in which attenuation and nonlinear material
responses influence the observed wave field.

From the inverse problem perspective, the coefficients $a$, $b$, and $q$
describe properties in the inaccessible interior of $\Omega$, whereas waves
can be generated and observed at the boundary. This naturally leads to the
question of whether these interior coefficients can be determined from the
map $\Lambda_{a,b,q}$. Since measured data are necessarily affected by
perturbations, a stability estimate is of particular interest.

\subsection{Admissible coefficients and the main result}
Fix an integer $m>n+1$, and let $K>\frac{n}{2}+3m+4$. For $M>0$, define
\begin{equation}\label{eq:admissible-coefficients}
\mathcal A(M)
:=
\left\{
(a,b,q)\in
\bigl(C_c^\infty(\Omega)\bigr)^3:
\|a\|_{H^K(\Omega)}
+
\|b\|_{H^K(\Omega)}
+
\|q\|_{H^K(\Omega)}
\leq M
\right\}.
\end{equation}

For sufficiently small $\delta>0$, let
$\mathcal D_{m+1}^{\delta}$ denote the admissible class of small boundary
values defined in \eqref{E-delta}. We denote the supremum of the difference between two DN maps over all admissible boundary data by
\begin{equation}\label{eq:def-eta-main}
    \eta
    :=
    \sup_{f\in\mathcal D_{m+1}^{\delta}}
    \|\left(\Lambda_{a_1,b_1,q_1}-\Lambda_{a_2,b_2,q_2}\right)(f)\|_{L^2(\Sigma)}.
\end{equation}

Note that $\eta$ is finite by Theorem
\ref{main_thm:welposedness}. Throughout this paper, $C>0$ denotes a generic constant that is independent of the parameters involved in the estimates and whose value may change from line to line. We are now ready to state the main result of this paper.

\begin{theorem}\label{thm:main-stability}
Let m be an integer with $m>n+1$, $M>0$, and let
$(a_j,b_j,q_j)\in\mathcal{A}(M)$ for $j=1,2.$
Assume that $T>\operatorname{diam}(\Omega)$. Then there exist constants
$C>0, \alpha\in(0,1),$ depending only on $\Omega$, $T$, and $M$ such that, for all $\eta\geq0$, we have
\begin{align}\label{eq:main-stability-a}
\|a_1-a_2\|_{L^\infty(\Omega)} + \|b_1-b_2\|_{L^\infty(\Omega)} +\|q_1-q_2\|_{L^\infty(\Omega)}
\leq
C\eta^\alpha.
\end{align}
In particular, if $\Lambda_{a_1,b_1,q_1}(f) = \Lambda_{a_2,b_2,q_2}(f)$ for all $f\in D^{\delta}_{m+1}$, then $\left(a_1,b_1,q_1\right) = \left(a_2,b_2,q_2\right)$ in $\Omega.$  
\end{theorem}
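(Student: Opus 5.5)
The plan is a second-order linearization, separating the linear coefficients $(a,b)$ from the nonlinear one $q$. Take boundary data $f=\epsilon_1 f_1+\epsilon_2 f_2$ with $f_1,f_2$ in the admissible class and $\epsilon_1,\epsilon_2$ small, so that $f\in\mathcal D^{\delta}_{m+1}$. By Theorem~\ref{main_thm:welposedness}, the solution $u_f$ is smooth in $(\epsilon_1,\epsilon_2)$ and admits the expansion
\[
u_f=\epsilon_1 v_1+\epsilon_2 v_2+\epsilon_1\epsilon_2 w+O(|\epsilon|^3).
\]
Here $v_j$ solves the linear damped equation $\Box v+a\partial_t v+bv=0$ with boundary data $f_j$, and $w$ solves $(\Box+a\partial_t+b)w=-2qv_1v_2$ with zero Cauchy and boundary data. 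The first-order term then gives
\[
\|(\Lambda^{\rm lin}_{a_1,b_1}-\Lambda^{\rm lin}_{a_2,b_2})f_1\|\le C\eta/\epsilon+C\epsilon\|f_1\|^2 .
\]
Optimizing in $\epsilon$ yields $\|\Lambda^{\rm lin}_1-\Lambda^{\rm lin}_2\|\le C\eta^{1/2}$ in the appropriate operator norm. Similarly, a mixed finite difference in $(\epsilon_1,\epsilon_2)$ bounds the second-order DN map $\partial_\nu w_1-\partial_\nu w_2$ by $C\eta^{\beta}$ for some $\beta>0$.

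\textbf{Recovering $a$.} Use geometric optics solutions $v=e^{i\lambda(t+x\cdot\omega)}\bigl(A_0+\lambda^{-1}A_1+\dots\bigr)$ for system 1, with $A_0$ solving the transport equation
\[
2(\partial_t-\omega\cdot\nabla)A_0+aA_0=0,
\]
so $A_0=\phi(x+t\omega)\exp\bigl(-\tfrac12\int_0^t a(x+s\omega)\,ds\bigr)$. Pair these with backward solutions $e^{-i\lambda(t+x\cdot\omega)}B_0$ of the adjoint equation $\Box-a\partial_t+b$ for system 2. Take $\phi$ supported near a line; $T>\mathrm{diam}(\Omega)$ ensures rays cross $\Omega$ within $(0,T)$. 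The remainder terms are controlled by energy estimates using the $H^K$ a priori bounds; this is where $m>n+1$ and $K$ large enter, through Sobolev embedding of the correctors. The integral identity from Green's formula,
\[
\int_{\Omega_T}\bigl((a_1-a_2)\partial_t v_1+(b_1-b_2)v_1\bigr)v_2 = \int_\Sigma(\Lambda^{\rm lin}_1-\Lambda^{\rm lin}_2)f\,\overline{g},
\]
at leading order $\lambda$ gives
\[
\Bigl|\int_{\mathbb R^n}\phi^2(y)\Bigl(\exp\Bigl(-\tfrac12\int_{\mathbb R}(a_1-a_2)(y+s\omega)\,ds\Bigr)-1\Bigr)\,dy\Bigr|\le C\bigl(\lambda^{a}\eta^{1/2}+\lambda^{-1}\bigr).
\]
This bounds the X-ray transform of $a_1-a_2$ pointwise in Fourier, i.e.\ $|\widehat{a_1-a_2}(\xi)|$ for $\xi\perp\omega$, after removing the exponential by the usual $|e^{z}-1|$ argument with a priori bounds, or by a linearization trick. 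Split $\|a_1-a_2\|_{H^{-1}}$ into low frequencies $|\xi|<R$ and high frequencies, and use the $H^K$ bound plus interpolation to get $\|a_1-a_2\|_{L^\infty}\le C\eta^{\alpha_1}$ after optimizing $\lambda$ and $R$.

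\textbf{Recovering $b$.} With $a$ now controlled, the order-$\lambda^0$ terms give the attenuated ray transform of $b_1-b_2$, weighted by the known attenuation of $a_1$, plus an error involving $a_1-a_2$ that is already $O(\eta^{\alpha_1})$. Repeat the Fourier/interpolation argument to obtain $\|b_1-b_2\|_\infty\le C\eta^{\alpha_2}$.

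\textbf{Recovering $q$.} For the second order, the identity is
\[
\int_{\Omega_T}2(q_1-q_2)v_1v_2v_3+\bigl(\text{terms with } a_1-a_2,\ b_1-b_2\bigr)=\text{boundary data}.
\]
Here $v_3$ is an adjoint solution. Choose two incoming GO waves and one outgoing wave with phases cancelling, $\lambda(t+x\cdot\omega)+\lambda(t+x\cdot\omega)-2\lambda(t+x\cdot\omega)$, or use asymptotic solutions concentrated on rays. This again gives a (weighted) ray transform of $q_1-q_2$, and the same argument yields $\|q_1-q_2\|_\infty\le C\eta^{\alpha_3}$. Uniqueness follows by setting $\eta=0$.

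The \textbf{main obstacle} is the quantitative bookkeeping. The GO remainders and the Taylor remainder in $\epsilon$ grow in $\lambda$, while $\|f\|$ must stay within the $\delta$-ball; after rescaling the amplitude by $\lambda^{-m-1}$, all powers must be balanced and the nonlinear attenuation factor $e^{-\frac12\int a}$ must be inverted stably. I would first try to handle this by linearizing the exponential using $|a_j|\le CM$.
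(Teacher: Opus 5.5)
Up to the $q$-step your plan is essentially the paper's. You use finite-difference linearization and GO solutions with $a_1-a_2$ at order $\lambda$. Along the ray, $a^{(12)}A_1^+A_2^-$ integrates exactly to $-2\bigl(e^{-\frac12\int a^{(12)}}-1\bigr)$, and the $|e^z-1|$ lower bound then gives the X-ray transform.

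Your variations in that part are fine:
\begin{itemize}
\item One-sided differences give $\eta^{1/2}$ rather than the paper's $\eta^{2/3}$. The paper obtains $\eta^{2/3}$ by using $\pm\varepsilon f$, which cancels the quadratic term.
\item A Fourier-slice bound on $\widehat{a_1-a_2}$ with a low/high-frequency split is a self-contained substitute for Natterer's $H^{-1/2}$ estimate.
\item For $b$, the weight is not an attenuation by $a_1$. It is $A_1^+A_2^-=\exp\bigl(-\frac12\int_0^t(a_1-a_2)\bigr)=1+O(\|a_1-a_2\|_\infty)$, which is exactly why the unattenuated argument can be repeated. The error is $O(\lambda\|a_1-a_2\|_\infty)$ rather than $O(\|a_1-a_2\|_\infty)$, because the order-$\lambda$ damping term must be moved to the right-hand side.
\end{itemize}

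The genuine gap is the claim that ``the same argument'' handles $q$. With phases $\lambda+\lambda-2\lambda$, each forward wave carries $\exp(-\frac12\int_0^t a_1)$ and the backward wave carries $\exp(+\frac12\int_0^t a_2)$, whatever the frequency. After the substitution $y=x+t\omega$ the leading term is therefore
\[
\int_{\mathbb R^n}\varphi^3(y)\int_0^T q^{(12)}(y-t\omega)\,e^{-\frac12\int_0^t a_1(y-\rho\omega)\,d\rho}\,e^{-\frac12\int_0^t a^{(12)}(y-\rho\omega)\,d\rho}\,dt\,dy .
\]
The second factor is $1+O(\eta^{\alpha_1})$. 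The first is a genuine attenuation of size $O(M)$. No arrangement of GO waves along one null direction cancels it. Linearizing the exponential does not help either, since $a_1$ is only bounded, not small.

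What you actually obtain is the attenuated X-ray transform of $q^{(12)}$ with attenuation $\frac12 a_1$. For this transform the Fourier slice theorem, and Natterer's estimate, are unavailable. You therefore need an additional ingredient: a stability estimate for the attenuated ray transform with smooth attenuation (for instance via Novikov's inversion formula on 2-planes), followed by interpolation.

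The paper takes a different route. It sets $\Psi(y,\omega)=\int_0^\infty q^{(12)}(y-t\omega)e^{-\frac12\int_0^t a_1(y-\rho\omega)\,d\rho}\,dt$ and derives the transport identity $q^{(12)}=\omega\cdot\nabla_y\Psi+\frac12 a_1\Psi$. It then combines an $L^\infty$ bound on $\Psi$ with the Landau--Kolmogorov inequality. Be aware of a weakness in that route. The GO identity only controls $\Psi$ at exterior base points. Inside $\Omega$ the half-line integral changes along the line, since $\omega\cdot\nabla\Psi=q^{(12)}-\frac12 a_1\Psi$. So the base-point invariance used for $a$ and $b$ does not carry the bound inside, and interior smallness of $\Psi$ again comes down to stably inverting the attenuated transform.

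Finally, as you suspected, the trilinear term needs remainders that are small in $L^\infty$. That is why the paper uses $(m+1)$-term asymptotic solutions rather than plain GO solutions.
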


The present work builds on the recent reconstruction result of Bhardwaj
et al.~\cite{bhardwaj2026reconstructionpotentialdampingcoefficients}, where
the corresponding inverse problem for the IBVP~\eqref{equation; IBVP} was
studied from the perspective of reconstruction. Here, we
address the quantitative stability problem and derive H\"older-type estimates for the recovery of all three coefficients from the nonlinear DN map. Our result is also related to the work of Lassas et al.
\cite{Lassas2022UniquenessRA}, where a semilinear wave equation with a
time-dependent nonlinear potential was studied in the absence of damping and
a linear potential. For time-dependent coefficients, the finite speed of
propagation implies that the DN map cannot recover the coefficients in the
entire space-time domain see, for instance, \cite[Section~1]{Kian2017}.
Therefore, recovery is restricted to a suitable accessible region.

In the present work, the coefficients are time-independent, and the condition $T>\operatorname{diam}(\Omega)$ allows us to obtain stability estimates throughout $\Omega$. An additional difficulty arises from the presence of the damping term. In contrast to the setting of
\cite{Lassas2022UniquenessRA}, the second-order linearization leads here to an attenuated ray transform of the difference of the nonlinear potentials. After controlling the damping and linear potential, we derive a stability estimate for this attenuated transform and use it to obtain the H\"older stability estimate for the nonlinear coefficient.

\subsection{Contribution of the present work}

The main contribution of this paper is a stability analysis
for the simultaneous determination of three different types of
time-independent coefficients appearing in the semilinear hyperbolic
equation. The problem requires one to separate a first-order damping term,
a zeroth-order linear potential, and a nonlinear potential from the
nonlinear boundary measurement operator.

A further difficulty is that the coefficients enter at different orders of
the linearization. The damping and linear potential are already present in
the first linearized equation, whereas the coefficient $q$ associated with
the power nonlinearity $u^2$ appears only at second order. We deal with both steps together i.e., the geometric optics solutions address the damping and the linear potential, whereas the asymptotic solutions give the stability of the nonlinear potential.

\subsection{Main ideas of the proof}

We briefly outline the method of proof. The argument relies on a combination of the second-order linearization with suitably constructed geometric optics and asymptotic solutions. The first-order linearization reduces the nonlinear inverse problem to an inverse problem for a linear damped wave equation given as follows
\[
    \Box v+a(x)\partial_tv+b(x)v=0.
\]

Comparing the first linearized equations for the two sets of
coefficients $(a_j, b_j, q_j)$, $j = 1, 2$ yields an integral identity
involving $a_1-a_2$ and $b_1-b_2$. We then choose geometric optics solutions in such a way that the large parameter $\tau$ multiplies the term with $a_1-a_2$, while the term with $b_1-b_2$ contains no such factor. So for large $\tau$, the damping term dominates the $b_1-b_2$ term. This yields a stability estimate for weighted line integrals of the damping
difference and, after localization and ray transform estimates, establishes the H\"older stability for the damping coefficient $a$.

Once we have an estimate for $a_1-a_2$, we return to the same
first-order identity. This time, we move the damping term to the error side,
i.e., the right-hand side in this case, and bound it using the estimate just
obtained. What is left is the term with $b_1-b_2$, and this gives the
stability estimate for the linear potential.

The nonlinear coefficient $q$ first appears in the second order
linearization. The resulting equation contains a source term involving $q$ and products of first-order solutions. Into this equation, we insert asymptotic solutions in which the two forward solutions carry the phase
$e^{i\tau(t+x\cdot\omega)}$ and the backward solution carries
$e^{-2i\tau(t+x\cdot\omega)}$, so that the phases add up to zero and the
leading term of the identity is independent of $\tau$. This reduces the main term in the resulting integral identity to an attenuated ray transform of $q_1-q_2$ along straight lines in $\Omega$.

Using the stability estimates already obtained for $a_1-a_2$ and
$b_1-b_2$, together with the bounds for the remainders in
the asymptotic expansions, we control all lower-order error terms. We then
derive a stability estimate for the corresponding attenuated ray transform
of $q_1-q_2$. Finally, a localization argument and the injectivity and
stability properties of the attenuated ray transform yield the desired
H\"older-type estimate for $q_1-q_2$. Thus the proof follows the hierarchy
\[
a \quad\longrightarrow\quad b \quad\longrightarrow\quad q,
\]
and the H\"older exponent becomes smaller at each step, so the stability
estimate gets weaker as we move from $a$ to $b$ to $q$.

\subsection{Previous results and related literature}
The inverse problem considered in this paper belongs to the broad class of
Calder\'on-type inverse problems, where unknown coefficients of a partial
differential equation are determined from boundary measurements. The subject 
  has become an active area of research since  the pioneering work of Calder\'on~\cite{Calderon1980} on the
recovery of electrical conductivity from boundary voltage and current data.
Over the years, related coefficient identification problems have been extensively
studied for elliptic, parabolic, and hyperbolic equations.

For hyperbolic equations, Bukhgeim and Klibanov~\cite{Bukhgeim1981} studied
the determination of a time-independent coefficient from boundary measurements. The construction of complex geometric optics solutions,
introduced by Sylvester and Uhlmann~\cite{Sylvester1987AGU} for the
Calder\'on problem, subsequently became an important tool in inverse
problems for wave equations. In the inverse scattering setting, a similar approach was developed in \cite{Novikov1988}. Using related oscillatory constructions,
Rakesh and Symes~\cite{Rakesh01011988} established uniqueness for the
recovery of a time-independent potential, while Rakesh~\cite{rakesh1990reconstruction}
derived a reconstruction procedure from boundary data. The simultaneous
determination of damping and potential coefficients in linear wave equation was investigated by
Isakov~\cite{Isakov1991AnIH}. In more general geometric settings, inverse
problems for wave equations with time-independent coefficients were studied
by Eskin~\cite{MR2235639,MR2441006}; see also
\cite[Chapter~8]{isakov2006inverse} for further results on inverse problems
for hyperbolic equations.

Stability for such coefficient recovery problems has also been widely studied. Sun~\cite{MR1059582} obtained
H\"older-type stability for a potential in a wave equation from boundary
measurements. Isakov and Sun~\cite{MR1158175} studied simultaneous stability
for first- and zeroth-order coefficients, whereas Cipolatti and
Lopez~\cite{MR2132903} established Lipschitz- and H\"older-type stability
estimates under different regularity assumptions. Stability for magnetic
perturbations was investigated in~\cite{MR2401822}, while partial boundary
measurements were considered in~\cite{MR2523687}. Related stability results
in geometric settings can be found in
\cite{MR1612709,MR2852371,MR4366889,MR3995367,kumar2025h} and the references therein.

Inverse problems involving time-dependent coefficients are generally more
delicate, since finite-time boundary measurements may not determine an
arbitrary coefficient throughout the entire space-time domain without
additional assumptions. Different approaches have been developed to address
this issue, including infinite-time measurements~\cite{RS91,Sal13},
input-to-output maps~\cite{Isa91}, and recovery in geometrically accessible
regions~\cite{RR91}. Further uniqueness and stability results for
time-dependent coefficients in wave equations can be found in
\cite{Kia16a,Kian2017,MR4124641,MR4343270,Sal14,MR3540317,
MR3595191,MR4013301,MR4191617}.

For nonlinear PDEs, a central technique is the \emph{higher-order
linearization method}, whose use in inverse problems goes back to
Isakov~\cite{Isakov1993OnUI} in the context of nonlinear parabolic
equations. The main idea is to differentiate the nonlinear measurement
operator with respect to small boundary inputs. While the first
linearization yields a linearized equation, higher-order derivatives
produce interaction terms that contain information about the nonlinear
coefficients.

For nonlinear wave equations in Euclidean domains, Nakamura and
collaborators~\cite{NakamuraWatanabe2008,NakamuraWatanabeKaltenbacher2009,
Nakamura2020InverseIB} established uniqueness and reconstruction results
for linear and nonlinear coefficients from boundary measurements.
Lin et al.~\cite{lin2024determining} studied a time-dependent semilinear
wave equation without damping and obtained uniqueness results for nonlinear
coefficients and unknown source terms. Lassas et al.~\cite{Lassas2022UniquenessRA}
studied uniqueness, reconstruction, and stability for a semilinear wave
equation without damping and linear potential terms, and also investigated
the corresponding inverse problem numerically, see \cite{ Lassas2024Numerical}. Related simultaneous recovery
problems involving several nonlinear coefficients and source terms were
considered in~\cite{Qiu2027Uniqueness}. More recently, Bhardwaj et al.~\cite{bhardwaj2026reconstructionpotentialdampingcoefficients}
studied the same semilinear wave model considered in the present work and
established reconstruction results for the associated coefficients. In a
related direction, Kumar et al.~\cite{kumar2026partialdatacoefficientidentification}
investigated uniqueness for the corresponding inverse problem in a partial
data setting.

In a geometric setting, Kurylev, Lassas, and
Uhlmann~\cite{Kurylev2018InversePF} demonstrated that nonlinear wave
interactions can reveal geometric information that is not directly
accessible through the corresponding linear problem. This approach was
further developed by Hintz and Uhlmann~\cite{Hintz2021TheDM} for the
recovery of Lorentzian geometry and nonlinear coefficients. Global
uniqueness results for more general nonlinear real principal-type
equations were obtained in~\cite{Oksanen2020InversePF}.

More recently, inverse problems for physically motivated nonlinear
hyperbolic models, including the Westervelt equation, nonlinear elastic
waves, and related nonlinear wave equations, have attracted substantial
interest; see, for example,
\cite{Uhlmann2023DeterminationOT,DEHOOP2019347,Uhlmann2021NonlinearUI,
Fu2023InversePO,Li2023InversePF,Uhlmann2022AnIB,CLOP,FIKO,LUW,liu2025partialdatainverseproblem}.
 Higher-order linearization has also been applied successfully to several
other nonlinear inverse problems; see
\cite{CARSTEA2019121,Lai2024PartialDI,Kian2020PartialDI,CFKKU,
Choulli2021,HarrachLin2023Simultaneous,KrupchykMaSahooSaloStAmant2025,
LinLiuZhang2022,LassasOksanenSahooSaloTetlow2025,KumarLiuVashisth2026,kumar2026reconstructiontimedependentcoefficientssemilinear}
and the references therein.

The present work builds on the reconstruction framework developed in
\cite{bhardwaj2026reconstructionpotentialdampingcoefficients}, where reconstruction of the coefficients were
established. The stability analysis carried out here requires several
additional ingredients. In particular, we derive finite-difference estimates
associated with the first- and second-order linearizations of the nonlinear
DN map, analyze the dependence of the geometric optics and
asymptotic solutions on the large parameter, obtain localization estimates for
the corresponding ray transforms, and optimize the auxiliary parameters in
order to derive explicit H\"older-type stability estimates.

\subsection{Organization of the paper}
The remainder of the paper is organized as follows.
In Section~\ref{sec:forward-problem}, we introduce the functional framework,
recall the preliminary results required for the subsequent analysis, establish
the well-posedness of the IBVP~\eqref{equation; IBVP}, and construct the
geometric optics and  asymptotic solutions that play a crucial role
in the proof of the main result. Section~\ref{sec:identities} is devoted to the
derivation of the relevant integral identities and the stability analysis of
the damping coefficient. In Section~\ref{sec:linear-potential}, we establish
the stability estimate for the linear potential. Finally, in
Section~\ref{sec:nonlinear-potential}, we use the second-order linearization
argument and establish the stability estimate for the nonlinear potential,
thereby completing the proof of the main result.
\section{Preliminaries}
\label{sec:forward-problem}

In this section, we introduce the functional framework and state the
preliminary results required for the stability analysis. We first recall the
small-data well-posedness of the semilinear IBVP from \cite{bhardwaj2026reconstructionpotentialdampingcoefficients} and
then present the geometric optics and higher-order asymptotic solutions that
will be used in the subsequent sections.
\subsection{Function spaces}
We use  standard notation for the Bochner and vector-valued Sobolev spaces;
see \cite[Chapter~5]{evans2022partial}. Here we introduce only those spaces that are specific to the present problem.

For an integer $m\geq0$, we define the energy space
\begin{equation}\label{eq:energy-space}
\mathcal{E}_m
:=
\bigcap_{k=0}^m
C^k\bigl([0,T];H^{m-k}(\Omega)\bigr)
\end{equation}
which is equipped with the norm
\begin{equation}\label{norm-Em}
\|u\|_{\mathcal{E}_m}^2
:=
\sup_{0\leq t\leq T}
\sum_{k=0}^m
\|\partial_t^ku(t,\cdot)\|_{H^{m-k}(\Omega)}^2.
\end{equation}
With this norm, $\mathcal{E}_m$ is a Banach space. Moreover, for
$m>n+1$, the Sobolev embedding theorem implies the product estimate
\begin{equation}\label{eq:Banach_algebra}
\|\Phi\Psi\|_{\mathcal{E}_m}
\leq
C_m
\|\Phi\|_{\mathcal{E}_m}
\|\Psi\|_{\mathcal{E}_m}.
\end{equation}
Thus $\mathcal{E}_m$ has the Banach algebra property, which is
needed to control the nonlinear term in \eqref{equation; IBVP}. We next introduce the class of boundary data compatible with the
vanishing initial conditions in \eqref{equation; IBVP}. Define
\begin{equation}\label{eq:trace-class}
\mathcal{K}_{m+1}
:=
\left\{
f\in H^{m+1}(\Sigma): \p^{k}_{t}f(0,\cdot)=0\ \ \text{ on }\p\Omega \text{ for }
 k=0,\ldots,m
\right\}.
\end{equation}
For $\delta>0$, we then set
\begin{equation}\label{E-delta}
\mathcal{D}_{m+1}^{\delta}
:=
\left\{
f\in\mathcal{K}_{m+1}:
\|f\|_{H^{m+1}(\Sigma)}<\delta
\right\}.
\end{equation}

\subsection{Small-data well-posedness}
With the above notation and function spaces in place, we are now in a position to state the well-posedness result for the IBVP \eqref{equation; IBVP}. More precisely, for sufficiently small Dirichlet boundary data $f$, the problem \eqref{equation; IBVP} admits a unique solution $u$ which belongs to the open ball
\begin{align*}
    \mathbb{B}_R(0) := \left\{ u \in \mathcal{E}_{m+1} \;:\; \|u\|_{\mathcal{E}_{m+1}} <R\right\},
\end{align*}
for some $R>0$.  Consequently, the associated DN map \eqref{eq:DN-map} is well defined for sufficiently small boundary data. The precise statement is given by the following theorem. 
\begin{theorem}[{\cite[Theorem~2.3]{bhardwaj2026reconstructionpotentialdampingcoefficients}}]
\label{main_thm:welposedness}
Let $m$ be an integer with $m>n+1$ and $(a,b,q)\in\mathcal{A}(M)$. Then there exist constants
$\delta>0$ and $R>0$ such that, for every
$f\in\mathcal D_{m+1}^{\delta}$, the IBVP
\eqref{equation; IBVP} admits a unique solution
$u\in \mathbb{B}_R(0) $. Moreover, the estimate 
\begin{align}
  \|u\|_{\mathcal{E}_{m+1}} + \|\partial_\nu u\|_{H^{m}(\Sigma)}
\le C\,e^{CT} \|f\|_{H^{m+1}(\Sigma)}, 
\end{align}
holds for some constant $C>0$, depending only on $\Omega$ and $M$. 
\end{theorem}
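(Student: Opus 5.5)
The plan is to prove the statement by a perturbation (fixed-point) argument around the linear damped problem, using the Banach algebra property \eqref{eq:Banach_algebra}.

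\textbf{Step 1 (linear estimate).} First I will establish a linear estimate for
\[
\Box w+a\partial_t w+bw=F \ \text{ in } \Omega_T,\qquad w|_{\Sigma}=f,\qquad w(0)=\partial_t w(0)=0,
\]
where $f\in\mathcal K_{m+1}$ and $F\in\mathcal E_m$ with $\partial_t^kF(0,\cdot)=0$ for $k\le m-1$. The target estimate is
\[
\|w\|_{\mathcal E_{m+1}}+\|\partial_\nu w\|_{H^m(\Sigma)}\le Ce^{CT}\bigl(\|f\|_{H^{m+1}(\Sigma)}+\|F\|_{\mathcal E_m}\bigr),
\]
with $C$ depending only on $\Omega$ and $M$. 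For $m=0$ this is the classical energy estimate together with the hidden-regularity (Lasiecka--Lions--Triggiani) bound for $\partial_\nu w$. The damping and potential terms are absorbed by Gronwall's inequality, which produces the factor $e^{CT}$; the coefficients are bounded in $W^{k,\infty}$ because $K>n/2+3m+4$. For higher $m$ I will proceed in two stages. Time regularity comes from differentiating the equation in $t$: the coefficients are time independent, and the vanishing of $\partial_t^kf(0)$ encoded in $\mathcal K_{m+1}$ gives the compatibility conditions, so each $\partial_t^kw$ solves the same type of problem with zero initial data. Spatial regularity then follows inductively from elliptic regularity applied to $-\Delta w=F-\partial_t^2w-a\partial_tw-bw$ with Dirichlet data $f(t)$.

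\textbf{Step 2 (reduction to a fixed point).} Let $v$ solve the linear problem with data $f$ and $F=0$. Writing $u=v+w$, the function $w$ must satisfy the zero-data problem with source $F=-q(v+w)^2$. Define $S(w)$ as the solution of this zero-data linear problem with source $-q(v+w)^2$. By \eqref{eq:Banach_algebra} (applicable since $m+1>n+1$ as well) and the smoothness of $q$, we have $\|q(v+w)^2\|_{\mathcal E_m}\le C\|v+w\|_{\mathcal E_{m+1}}^2$. Since $v$ and $w$ vanish to order $m$ at $t=0$, so do the time derivatives of the source, and the compatibility conditions of Step 1 hold. Step 1 then gives
\[
\|S(w)\|_{\mathcal E_{m+1}}\le Ce^{CT}\bigl(\|f\|+\|w\|\bigr)^2,
\]
and the difference estimate
\[
\|S(w_1)-S(w_2)\|_{\mathcal E_{m+1}}\le Ce^{CT}\bigl(\|v\|+\|w_1\|+\|w_2\|\bigr)\|w_1-w_2\|_{\mathcal E_{m+1}}.
\]

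\textbf{Step 3 (contraction).} Choosing $R$ and then $\delta$ small, $S$ is a contraction on a small closed ball of $\mathcal E_{m+1}$. Its fixed point gives $u=v+w$ with $\|u\|_{\mathcal E_{m+1}}\le C e^{CT}\|f\|_{H^{m+1}(\Sigma)}$, since $\|w\|\lesssim\|f\|^2\lesssim\|f\|$. The bound on $\partial_\nu u$ follows from the boundary part of Step 1 applied to $u$, viewed as the solution of the linear problem with source $-qu^2$. Uniqueness in $\mathbb B_R(0)$ comes from the same difference estimate applied to two solutions $u_1,u_2$: their difference solves the zero-data linear problem with source $-q(u_1+u_2)(u_1-u_2)$, so for $R$ small it must vanish.

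\textbf{Main obstacle.} I expect the main difficulty to be Step 1 at high regularity. It requires tracking the compatibility conditions for the source and boundary data at $t=0$ and obtaining the boundary normal-derivative bound in $H^m(\Sigma)$, not merely $L^2$. I would handle this by applying the $m=0$ hidden-regularity estimate to the time derivatives $\partial_t^kw$ and recovering tangential derivatives from the equation and elliptic trace estimates.
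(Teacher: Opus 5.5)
Your overall plan is the one the paper relies on. The paper does not prove this theorem but cites it, remarking only that the proof combines linear energy estimates, the algebra property \eqref{eq:Banach_algebra}, and a contraction in $\mathcal E_{m+1}$, which is exactly what your Steps 1--3 do. One small point in Step 2: ``$w$ vanishes to order $m$ at $t=0$'' is only available if you run the contraction in the closed subspace $\{w\in\mathcal E_{m+1}:\partial_t^k w(0,\cdot)=0,\ k\le m\}$, which $S$ maps into itself. Alternatively, since $q\in C_c^\infty(\Omega)$, the source vanishes near $\Sigma$, so the boundary compatibility conditions hold automatically.

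The step that fails as you describe it is the bound on $\|\partial_\nu w\|_{H^m(\Sigma)}$ for $m\ge1$. Hidden regularity applied to $\partial_t^k w$ controls only the pure time derivatives $\partial_t^k\partial_\nu w$ in $L^2(\Sigma)$. The derivatives along $\partial\Omega$ cannot be recovered from elliptic trace estimates. At fixed $t$ you have $-\Delta\partial_t^k w=\partial_t^kF-\partial_t^{k+2}w-a\partial_t^{k+1}w-b\partial_t^kw$, with $\partial_t^{k+2}w(t)$ only in $H^{m-k-1}(\Omega)$. Elliptic regularity then gives $\partial_t^k w(t)\in H^{m+1-k}(\Omega)$ and hence $\partial_\nu\partial_t^k w(t)\in H^{m-k-1/2}(\partial\Omega)$, which is half a derivative short of what $H^m(\Sigma)$ requires. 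Closing this would need $\partial_t^{k+2}w\in L^2(0,T;H^{m-k-1/2}(\Omega))$. That is false in general, already for $a=b=0$ and a plane wave $g(t-x\cdot\theta)$ with $g\in H^{m+1}\setminus H^{m+3/2}$. The half-derivative gain of the normal trace is a hyperbolic effect, not an elliptic one.

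The standard remedy is to apply the $m=0$ multiplier (hidden-regularity) estimate to $Z^\alpha w$, $|\alpha|\le m$, where $Z$ ranges over $\partial_t$ and smooth vector fields on $\overline\Omega$ tangent to $\partial\Omega$. Then:
\begin{itemize}
\item $Z^\alpha w|_{\Sigma}=Z^\alpha f\in H^1(\Sigma)$ and the Cauchy data vanish;
\item $Z^\alpha w\in C([0,T];H^1(\Omega))\cap C^1([0,T];L^2(\Omega))$;
\item the commutator $[\Box+a\partial_t+b,Z^\alpha]w$ is bounded in $C([0,T];L^2(\Omega))$ by $C\|w\|_{\mathcal E_{m+1}}$, which the interior part of your Step 1 already controls.
\end{itemize}
This gives $\partial_\nu Z^\alpha w\in L^2(\Sigma)$. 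Commuting $Z^\alpha$ past $\partial_\nu$ produces only lower-order terms, handled by induction on $|\alpha|$, and yields the $H^m(\Sigma)$ bound. Alternatively, you can quote the higher-order sharp regularity results of Lasiecka--Lions--Triggiani, or the linear estimate \cite[Lemma~2.2]{bhardwaj2026reconstructionpotentialdampingcoefficients} that the paper itself invokes.
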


\begin{remark}
The proof of Theorem~\ref{main_thm:welposedness} is based on the
corresponding linear energy estimates, the algebra property \eqref{eq:Banach_algebra} and a contraction argument in
$\mathcal E_{m+1}$.
\end{remark}

 \subsection{Geometric optics and higher-order asymptotic solutions} In this subsection, we introduce the geometric optics and asymptotic solutions
that will be used in the subsequent analysis. We first recall the geometric
optics solutions associated with the linearized IBVP
\begin{align}\label{eqn;v}
\begin{cases}
\mathcal{L}_{a,b}(v):=\Box v(t,x) + a(x)\partial_{t}v(t,x) +b(x)v(t,x)=0,  & (t,x)\in\Omega_{T},\\
v(t,x)=f(t,x),&(t,x)\in\Sigma,\\
v(0,x)=\partial_{t}v(0,x)=0,&x\in\Omega
\end{cases}
\end{align}
 and its associated backward wave problem posed with the second coefficient
pair $(a_2,b_2)$,
 \begin{align}\label{eq;:backward}
   \begin{cases}
      \mathcal{L}^{\ast}_{a_2,b_2} v_{0}(t,x):= \Box v_{0}(t,x)-a_{2}(x)\partial_{t} v_{0}(t,x)+b_{2}(x)v_{0}(t,x)=0, &(t,x)\in\Omega_{T},\\
       v_{0}(T,x)=\partial_{t}v_{0}(T,x)=0,&x\in\Omega.
   \end{cases} 
\end{align}
These solutions are the main tools in the stability analysis of
the damping coefficient $a$ and the linear potential $b$, and the resulting
estimates will then be used in the recovery of the nonlinear potential $q$. The constructions in Lemmas
\ref{lemma: geom opt sol v_{1}} and \ref{lemma: geom opt sol v} follow
standard geometric optics arguments; we refer to
\cite{Isakov1991AnIH} for proof of these lammas.

\begin{lemma}[{\cite[Lemma~~2]{Isakov1991AnIH}}]\label{lemma: geom opt sol v_{1}}
   Let $(a,b,q)\in\mathcal A(M)$, $T>0$ and $\varphi\in C_{c}^{\infty}(\R^{n})$ be such that $\supp(\varphi) \cap\overline{\Omega}=\emptyset$. Then for a fixed $\omega \in \mathbb{S}^{n-1}$ and for any $\tau>0$, there exists a solution of IBVP \eqref{eqn;v} taking  the following form 
  \begin{align}\label{eq: v_{(1)}}
  \begin{split}
 v(t,x) = \varphi(x+t\omega)A^{+}(t,x)\exp(\mathrm{i}\tau(x\cdot\omega + t)) + R(t,x,\tau),
 \end{split}
\end{align}
where 
\begin{align}\label{eq:def-Aplus}
     A^{+}(t,x)=\exp\left(-\frac{1}{2}\int_0^ta(x+s\omega)\,ds\right),
\end{align}
and $R(t,x,\tau)$ vanishes at the initial time, $R(0,\cdot,\tau) = \partial_{t}R(0,\cdot,\tau) =  0$ in $\Omega$. Moreover $R$ satisfies
\begin{align}\label{eq: r_{(1)}}
  \tau\|R\|_{L^{2}(\Omega_{T})}+\|\partial_{t} R\|_{L^{2}(\Omega_{T})} \leq C \|\varphi\|_{H^{3}(\R^n)},
\end{align}
where $C>0$ depends only on $\Omega$, $T$ and $M$.
\end{lemma}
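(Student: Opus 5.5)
We construct the solution as a geometric optics ansatz plus a correction solving an inhomogeneous IBVP with zero data; the whole argument reduces to computing $\mathcal{L}_{a,b}$ applied to the ansatz. Set
\[
v_0(t,x):=\varphi(x+t\omega)A^{+}(t,x)e^{\mathrm{i}\tau(x\cdot\omega+t)}.
\]
Write $\theta=x\cdot\omega+t$, so that $\Box e^{\mathrm{i}\tau\theta}=0$ because $|\omega|=1$. Put $\psi(t,x):=\varphi(x+t\omega)A^{+}(t,x)$. The standard expansion gives
\[
\mathcal{L}_{a,b}\bigl(\psi e^{\mathrm{i}\tau\theta}\bigr)
=e^{\mathrm{i}\tau\theta}\Bigl(2\mathrm{i}\tau\bigl(\partial_t-\omega\cdot\nabla_x\bigr)\psi+\mathrm{i}\tau a\psi+\mathcal{L}_{a,b}\psi\Bigr).
\]
The transport operator $\partial_t-\omega\cdot\nabla_x$ annihilates $\varphi(x+t\omega)$. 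Note also that $A^{+}(t,x)=\exp\bigl(-\tfrac12\int_0^t a(x+s\omega)\,ds\bigr)$ is a function of $x+t\omega$ and $t$ alone, not of $x$ separately. Differentiating along the direction $(1,-\omega)$ yields $(\partial_t-\omega\cdot\nabla_x)A^{+}=-\tfrac12 a(x+t\omega-t\omega)A^{+}=-\tfrac12 a(x)A^{+}$, so the $O(\tau)$ terms cancel exactly. Hence
\[
\mathcal{L}_{a,b}v_0=e^{\mathrm{i}\tau\theta}F,\qquad F:=\mathcal{L}_{a,b}\psi,
\]
with $F$ independent of $\tau$. Since $a\in C_c^\infty$ with $H^K$ bound $M$, $A^{+}$ and its first two derivatives are bounded in terms of $M,T$. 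It follows that $\|F\|_{L^2(\Omega_T)}+\|\partial_tF\|_{L^2(\Omega_T)}\le C\|\varphi\|_{H^3}$, where the one extra time derivative costs one more derivative of $\varphi$.

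Initial conditions come for free from the support assumption. At $t=0$ we have $\psi(0,x)=\varphi(x)$ and $\partial_t\psi(0,x)$ involves $\nabla\varphi(x)$. Both vanish for $x\in\Omega$ because $\supp\varphi\cap\overline\Omega=\emptyset$ and $\supp\varphi$ is closed, so this holds on a neighbourhood. Thus $v_0(0,\cdot)=\partial_tv_0(0,\cdot)=0$ in $\Omega$.

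We then define $R$ as the solution of
\[
\mathcal{L}_{a,b}R=-e^{\mathrm{i}\tau\theta}F,\qquad R|_\Sigma=0,\qquad R(0)=\partial_tR(0)=0.
\]
The sum $v=v_0+R$ solves \eqref{eqn;v} with $f:=v_0|_\Sigma$, which is compatible at $t=0$ because $v_0$ vanishes near $\{0\}\times\partial\Omega$.

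The main obstacle is the bound $\tau\|R\|_{L^2}\le C\|\varphi\|_{H^3}$. The plain energy estimate only gives $\|R\|_{\mathcal E_1}\le C\|F\|_{L^2}$, with no gain in $\tau$. To gain $\tau^{-1}$ we exploit the oscillation of the source in time, via the integrated-in-time trick. Let $\tilde R(t):=\int_0^tR(s)\,ds$. Since the coefficients are time-independent, $\tilde R$ solves
\[
\mathcal{L}_{a,b}\tilde R=G,\qquad G(t,x):=-\int_0^te^{\mathrm{i}\tau\theta(s,x)}F(s,x)\,ds,
\]
with zero data; this uses $\partial_t\tilde R(0)=R(0)=0$ and $\partial_t^2\tilde R(0)=\partial_tR(0)=0$. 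Integrating by parts in $s$, using $e^{\mathrm{i}\tau\theta}=(\mathrm{i}\tau)^{-1}\partial_se^{\mathrm{i}\tau\theta}$ and $F(0,\cdot)=0$ on $\Omega$, gives
\[
\|G\|_{L^2(\Omega_T)}\le C\tau^{-1}\bigl(\|F\|_{L^\infty_tL^2_x}+\|\partial_tF\|_{L^2}\bigr)\le C\tau^{-1}\|\varphi\|_{H^3}.
\]
Energy estimates for $\tilde R$ then yield $\|R\|_{L^2(\Omega_T)}=\|\partial_t\tilde R\|_{L^2(\Omega_T)}\le C\tau^{-1}\|\varphi\|_{H^3}$.

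Independently, the energy estimate applied to $R$ itself gives $\|\partial_tR\|_{L^2}\le C\|F\|_{L^2}\le C\|\varphi\|_{H^3}$. Combining the two bounds gives \eqref{eq: r_{(1)}}, with constants depending only on $\Omega$, $T$ and $M$ through the $L^\infty$ bounds on $a,b$ furnished by Sobolev embedding.
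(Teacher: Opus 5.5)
Your proposal is correct and is essentially the standard geometric optics argument behind the cited result, which the paper itself does not reproduce: the transport identity $(\partial_t-\omega\cdot\nabla_x)A^{+}=-\tfrac12 a A^{+}$ cancels the $O(\tau)$ terms, the correction $R$ solves a zero-data problem with source $-e^{\mathrm{i}\tau\theta}\mathcal{L}_{a,b}\psi$, and passing to $\tilde R=\int_0^t R\,ds$ with one integration by parts in time gives the $\tau^{-1}$ gain at the cost of one extra derivative of $\varphi$, which is exactly why $H^3$ appears. One small imprecision: bounding $\partial_t F$ uses third-order derivatives of $A^{+}$, not only the first two, but these are still controlled by $M$ and $T$ via the embedding $H^K(\Omega)\hookrightarrow C^{3m+4}(\overline\Omega)$.
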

\begin{lemma}[{\cite[Lemma~~3]{Isakov1991AnIH}}]\label{lemma: geom opt sol v}
   Let $(a_2,b_2,q_2)\in\mathcal A(M)$, $T>0$ and $\varphi\in C_{c}^{\infty}(\R^{n})$ be such that $\left(\supp(\varphi)\pm T\omega\right)\cap\overline{\Omega}=\emptyset$. Then for a fixed    $\omega \in \mathbb{S}^{n-1}$ and for any $\tau>0$, there exists a solution of IBVP \eqref{eq;:backward} taking  the following form 
  \begin{align}\label{eq: v}
  v_{0}(t,x) = \varphi(x+t\omega)A_2^{-}(t,x)\exp(-\mathrm{i}\tau(x\cdot\omega + t)) + R_0(t,x,\tau),
\end{align}
where
\begin{align}\label{eq:def-Aminus}
    A_2^{-}(t,x)=\exp\left(\frac{1}{2}\int_0^ta_{2}(x+s\omega)\,ds\right),
\end{align}
 and $R_0(t,x,\tau)$ vanishes at the final time, $R_0(T,\cdot,\tau) = \partial_{t}R_0(T,\cdot,\tau) =  0$ in $\Omega$, and satisfies
\begin{align}\label{eq: estimate of r_{(2)}}
  \tau\|R_{0}\|_{L^{2}(\Omega_{T})}+\|\partial_{t} R_{0}\|_{L^{2}(\Omega_{T})} \leq C \|\varphi\|_{H^{3}(\R^n)},
\end{align}
where $C>0$ depends only on $\Omega$, $T$ and $M$.
\end{lemma}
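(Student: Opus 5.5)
We construct $v_0$ as a WKB ansatz plus a correction. Put $\theta:=x\cdot\omega+t$ and take
\[
v_0^{\mathrm{GO}}(t,x):=\varphi(x+t\omega)A_2^{-}(t,x)e^{-\mathrm{i}\tau\theta},
\]
and let $R_0$ be the solution of the backward problem
\[
\mathcal{L}^{\ast}_{a_2,b_2}R_0=-\mathcal{L}^{\ast}_{a_2,b_2}v_0^{\mathrm{GO}},\qquad R_0(T,\cdot)=\partial_tR_0(T,\cdot)=0.
\]
Then $v_0=v_0^{\mathrm{GO}}+R_0$ solves \eqref{eq;:backward}.

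The hypothesis $(\supp\varphi+T\omega)\cap\overline\Omega=\emptyset$ means $x+T\omega\notin\supp\varphi$ for $x\in\overline\Omega$. Since this is an open condition, the amplitude vanishes for $t$ near $T$. Hence $v_0^{\mathrm{GO}}$ has zero Cauchy data at $t=T$, and the final conditions are consistent. The condition with $-T\omega$ is not needed for the construction; it is only used later in the integral identities.

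Next we compute $\mathcal{L}^{\ast}_{a_2,b_2}v_0^{\mathrm{GO}}$. With $\Phi(t,x)=\varphi(x+t\omega)$ and $\mathcal{T}:=\partial_t-\omega\cdot\nabla_x$, we have $\Box\theta=0$ and $(\partial_t\theta)^2-|\nabla\theta|^2=0$, so the $\tau^2$ term vanishes. The $\tau^1$ term is
\[
-2\mathrm{i}\tau\,\mathcal{T}(\Phi A_2^-)+\mathrm{i}\tau a_2\Phi A_2^- ,
\]
from $-a_2\partial_t$ acting on the phase. Since $\mathcal{T}\Phi=0$, this term vanishes exactly when
\[
\mathcal{T}A_2^-=\tfrac12 a_2A_2^-.
\]
We check this for $A_2^-=\exp\bigl(\tfrac12\int_0^t a_2(x+s\omega)\,ds\bigr)$, where the $\int_0^t$ form makes the computation direct. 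The derivative $\partial_t$ gives $\tfrac12 a_2(x+t\omega)$. For the other term, $\omega\cdot\nabla_x\int_0^t a_2(x+s\omega)\,ds=\int_0^t\partial_s a_2(x+s\omega)\,ds=a_2(x+t\omega)-a_2(x)$. Hence
\[
\mathcal{T}\int_0^t a_2(x+s\omega)\,ds=a_2(x),
\]
which gives $\mathcal{T}A_2^-=\tfrac12 a_2(x)A_2^-$, as required. The sign of the exponent is opposite to that of $A^+$ in Lemma~\ref{lemma: geom opt sol v_{1}} because the damping term carries the opposite sign in $\mathcal{L}^\ast$.

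What remains is the $\tau$-independent source
\[
F:=-e^{-\mathrm{i}\tau\theta}\,\mathcal{L}^{\ast}_{a_2,b_2}\bigl(\Phi A_2^-\bigr),
\]
so that $\mathcal{L}^\ast_{a_2,b_2}R_0=F$. It involves up to two derivatives of $\varphi$ and of $a_2$, together with $b_2$. Because $a_2,b_2\in H^K$ with $K$ large, Sobolev embedding bounds their $C^2$ norms by $CM$. Therefore
\[
\|F\|_{L^2(\Omega_T)}+\|\partial_tF\|_{L^2(\Omega_T)}\le C\tau\|\varphi\|_{H^2},\qquad \|F\|_{H^1(0,T;L^2)}\le C\tau\|\varphi\|_{H^3}.
\]
Note that $\partial_tF$ costs a factor $\tau$ from the phase. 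The bound $\|F\|_{L^2}\le C\|\varphi\|_{H^2}$ alone already yields the time-derivative estimate through standard energy estimates for the backward damped wave equation with homogeneous Dirichlet data, which we impose on $R_0$:
\[
\|\partial_tR_0\|_{L^2(\Omega_T)}\le C\|\varphi\|_{H^2}.
\]

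The main obstacle is the gain of $\tau^{-1}$ for $\|R_0\|_{L^2}$, since the source is not small and the smallness must come from oscillation. We plan to write $F=e^{-\mathrm{i}\tau\theta}G$ with $G$ smooth and $\tau$-independent. Integrating in time, we set
\[
F=\partial_t H,\qquad H(t)=-\int_t^T F(s)\,ds,\qquad H(T)=0.
\]
Integrating by parts in $s$ against the phase gives
\[
H=\tfrac{1}{\mathrm{i}\tau}\,e^{-\mathrm{i}\tau\theta}G+\mathcal{O}(\tau^{-1}),
\]
with the error involving $\partial_tG$. Hence $\|H\|_{L^2}\le C\tau^{-1}\|\varphi\|_{H^3}$, which is where the third derivative of $\varphi$ enters.

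Finally, because the coefficients are time-independent, $\partial_t$ commutes with $\mathcal{L}^\ast_{a_2,b_2}$. Let $W$ solve $\mathcal{L}^\ast W=H$ with zero final data and zero Dirichlet data. Then $\partial_tW$ solves the equation with source $\partial_tH=F$ and again zero final data, since $W(T)=0$ and $\partial_t^2W(T)=H(T)-\ldots=0$. By uniqueness $R_0=\partial_tW$. The energy estimate then gives
\[
\|R_0\|_{L^2}\le C\|H\|_{L^2}\le C\tau^{-1}\|\varphi\|_{H^3},
\]
which is the claimed bound~\eqref{eq: estimate of r_{(2)}}. All constants depend only on $\Omega$, $T$ and $M$.
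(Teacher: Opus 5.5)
Your proof is correct and follows the standard argument behind the cited lemma of Isakov, which the paper quotes without proof. That argument uses the WKB ansatz with the transport equation $(\partial_t-\omega\cdot\nabla)A_2^-=\tfrac12 a_2A_2^-$, a Dirichlet remainder controlled by the backward energy estimate, and the $\tau^{-1}$ gain via the time-antiderivative trick (possible because $a_2,b_2$ are time-independent), which is exactly why $\|\varphi\|_{H^3}$ appears. Two harmless slips: it is $(\supp\varphi-T\omega)\cap\overline{\Omega}=\emptyset$, not the $+T\omega$ condition, that gives $\varphi(x+T\omega)=0$ for $x\in\overline{\Omega}$, and $\|\partial_tF\|_{L^2}$ needs $\|\varphi\|_{H^3}$ rather than $\|\varphi\|_{H^2}$, though you only use $\|F\|_{L^2}$ and the bound on $H$, which you state correctly.
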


We next state  asymptotic solutions carrying an expansion in negative powers of $\tau$, whose remainders decay in $L^\infty(\Omega_T)$ as $\tau \to \infty$. This stronger control will be required for the stable 
recovery of the nonlinear potential $q$. The precise construction of these solutions and the corresponding remainder estimates are stated in Lemmas \ref{Asymptotic solutions for IBVP} and  \ref{Asymptotic solutions}  below. We refer to \cite{bhardwaj2026reconstructionpotentialdampingcoefficients} for proof of these lemmas. 

\begin{lemma}[{\cite[Proposition~3.5]{bhardwaj2026reconstructionpotentialdampingcoefficients}}]\label{Asymptotic solutions for IBVP}
 Let $m>n+1$, $\omega\in\mathbb S^{n-1}$, $T>0$, $(a,b,q) \in \mathcal{A}(M)$ and $\varphi\in C_{c}^{\infty}(\R^n)$ satisfy $\operatorname{supp}\varphi\cap\overline{\Omega}=\emptyset$. Then, for every integer $N\ge1$ and every
$\tau>1$, the IBVP \eqref{eqn;v} admits an asymptotic solution  having the form 
    \begin{equation}\label{eqn: IVP geom opt sol 1}
 v(t,x)= e^{i\tau\,(t+x \cdot\omega)}\left(m_{0}(t,x)+\frac{m_{1}(t,x)}{\tau}+\frac{m_{2}(t,x)}{\tau^2}+\cdots+\frac{m_{N}(t,x)}{\tau^N}\right)+R(t,x,\tau),
    \end{equation}
    where the function $m_0$ can be chosen as
    \begin{equation}\label{eqn:m0-explicit}
        m_0(t,x)= \varphi(x+t\omega)\exp\left(-\frac12\int_0^ta(x+s\omega)\,ds\right)
    \end{equation}
for $(t,x)\in \Omega_{T}$. Further $m_{k}$ can be chosen as 
\begin{equation}\label{eqn:mk-explicit}
m_k(t,x)
=
\frac{i}{2}
\int_0^t
\mathcal L_{a,b}m_{k-1}
\bigl(s,x+(t-s)\omega\bigr)
\exp\left(
-\frac12
\int_s^t
a\bigl(x+(t-r)\omega\bigr)\,dr
\right)
\,ds
\end{equation}
for $(t,x)\in \Omega_{T}$ and $1\leq k \leq N$. The correction term $R(t,x,\tau)$ satisfies $R(0,x,\tau)=\partial_t R(0,x,\tau)=0$ for $x\in \Omega$, together with the estimate
\begin{equation}\label{eq: r__{(2)}}
   \|R\|_{W^{1,\infty}(\Omega_{T})}\leq C\tau^{\,m-N}\|\varphi\|_{H^{m+2+2N}(\mathbb{R}^{n})},
\end{equation}
where $C>0$ is independent of $\tau$ and $\varphi$.
    \end{lemma}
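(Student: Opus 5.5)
The plan is a WKB construction: build an approximate solution by formal expansion, fix the amplitudes by solving transport equations, and correct the error with a linear energy estimate. Write $\psi(t,x):=t+x\cdot\omega$. This phase is null, i.e. $(\partial_t\psi)^2-|\nabla_x\psi|^2=0$. A direct computation for a smooth amplitude $w$ gives
\begin{equation*}
\mathcal L_{a,b}\bigl(e^{i\tau\psi}w\bigr)=e^{i\tau\psi}\Bigl(i\tau\bigl(2(\partial_t-\omega\cdot\nabla_x)w+a\,w\bigr)+\mathcal L_{a,b}w\Bigr),
\end{equation*}
so the $\tau^2$ terms cancel. Inserting $w=\sum_{k=0}^N m_k\tau^{-k}$ and collecting powers of $\tau$ gives a transport hierarchy:
\begin{equation*}
2(\partial_t-\omega\cdot\nabla_x)m_0+a\,m_0=0,\qquad 2(\partial_t-\omega\cdot\nabla_x)m_k+a\,m_k=i\,\mathcal L_{a,b}m_{k-1},\quad 1\le k\le N.
\end{equation*}
With this hierarchy, the only surviving error is $\mathcal L_{a,b}(e^{i\tau\psi}w)=e^{i\tau\psi}\tau^{-N}\mathcal L_{a,b}m_N$.

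The transport operator is differentiation along the lines $s\mapsto(s,x+(t-s)\omega)$, on which $x+t\omega$ is constant. Hence $m_0=\varphi(x+t\omega)\times(\text{integrating factor in }a)$, which is \eqref{eqn:m0-explicit} after matching the parametrization of the line integral. For $k\ge1$ I would take zero data at $t=0$ and solve by Duhamel's formula along these characteristics; this yields exactly \eqref{eqn:mk-explicit}. Since $\supp\varphi$ is compact and disjoint from $\overline\Omega$, there is $d>0$ such that every $m_k$ vanishes on $\{t<d\}\times\Omega$. For $m_0$ this holds because $x+t\omega\notin\supp\varphi$ when $t<d$. For $m_k$ it follows by induction through the Duhamel integral, which only samples points $x+(t-s)\omega$ with $s\le t$. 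In particular $m_k$, $\partial_t m_k$ and all higher time derivatives vanish at $t=0$ in $\Omega$.

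The remainder is then defined as the solution of
\begin{equation*}
\mathcal L_{a,b}R=-e^{i\tau\psi}\tau^{-N}\mathcal L_{a,b}m_N\ \text{ in }\Omega_T,\qquad R|_\Sigma=0,\qquad R(0,\cdot)=\partial_tR(0,\cdot)=0.
\end{equation*}
The boundary datum $f$ is the trace on $\Sigma$ of $e^{i\tau\psi}w$. By the support property above, $f$ vanishes for $t<d$, so $f\in\mathcal K_{m+1}$, and the source vanishes to infinite order at $t=0$; all compatibility conditions therefore hold. The higher-order linear energy estimate used in the proof of Theorem \ref{main_thm:welposedness} then gives
\begin{equation*}
\|R\|_{\mathcal E_{m}}\le C\,\tau^{-N}\bigl\|e^{i\tau\psi}\mathcal L_{a,b}m_N\bigr\|_{H^{m}(\Omega_T)}\le C\,\tau^{m-N}\|m_N\|_{H^{m+2}(\Omega_T)},
\end{equation*}
where each derivative falling on the phase costs one factor of $\tau$. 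Since $m>n+1$, Sobolev embedding $\mathcal E_m\hookrightarrow W^{1,\infty}(\Omega_T)$ converts this into the $W^{1,\infty}$ bound \eqref{eq: r__{(2)}}.

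The main obstacle is the derivative bookkeeping. Each Duhamel step applies the second-order operator $\mathcal L_{a,b}$ to the previous amplitude, so $\|m_N\|_{H^{m+2}}\le C\|\varphi\|_{H^{m+2+2N}(\mathbb R^n)}$. The constant involves products of derivatives of the exponential weights in $a$ and of $b$ up to order about $m+2+2N$. I would control these using the a priori bound $\|a\|_{H^K}+\|b\|_{H^K}\le M$ together with Sobolev algebra estimates, and verify that $K>\tfrac n2+3m+4$ suffices for the values of $N$ used later (the parameter $N$ will be tied to $m$). The uniformity of $C$ in $\tau$ and $\varphi$ then follows because the amplitudes $m_k$ are independent of $\tau$.
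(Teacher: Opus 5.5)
Your proposal is correct and is essentially the construction the paper relies on; the paper itself defers the proof to the cited reference, and its Section 5 bookkeeping ($\|m_k\|_{\mathcal E_{m+2}}\le C\|\varphi\|_{H^{m+2+2k}}$, with $K>\frac n2+3m+4$ and $N=m+1$) matches your steps: the transport hierarchy solved by Duhamel along the lines $x+t\omega=\mathrm{const}$, the remainder from the higher-order energy estimate with a loss of $\tau^m$, and Sobolev embedding into $W^{1,\infty}$. One point to tighten is the support induction: state it as $\supp m_k\subset\{(t,x): x+t\omega\in\supp\varphi\}$, which holds because the Duhamel integral stays on the characteristic through $(t,x)$; vanishing on $\{t<d\}\times\Omega$ alone is not preserved by the induction, since the sampled points $x+(t-s)\omega$ may leave $\Omega$.
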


    \begin{lemma}[{\cite[Proposition 3.3]{bhardwaj2026reconstructionpotentialdampingcoefficients}}]\label{Asymptotic solutions}
 Let $m>n+1$, $\omega\in\mathbb S^{n-1}$, $T>0$, $\left(a_2,b_2,q_2\right) \in \mathcal{A}(M)$ and $\varphi\in C_{c}^{\infty}(\R^n)$ satisfy $\bigl(\supp\varphi\pm T\omega\bigr)\cap\overline{\Omega}=\emptyset$.  Then, for every integer $N\ge1$ and every
$\tau>1$, the backward wave problem \eqref{eq;:backward}
 admits an asymptotic solution  having the form 
    \begin{equation}\label{eqn: IBVP geom opt sol 1}
 v_{0}(t,x)= e^{-{2}i\tau\,(t+x \cdot\omega)}\left(\widetilde{m}_{0}(t,x)+\frac{\widetilde{m}_{1}(t,x)}{\tau}+\frac{\widetilde{m}_{2}(t,x)}{\tau^2}+\cdots+\frac{\widetilde{m}_{N}(t,x)}{\tau^N}\right)+R_0(t,x,\tau),
    \end{equation}
    where the function $\widetilde{m}_{0}$ can be chosen as
    \begin{equation}\label{eq:tildem0-explicit}
        \widetilde m_0(t,x) = \varphi(x+t\omega)\exp\left(\frac12\int_0^ta_2(x+s\omega)\,ds\right)
    \end{equation}
for $(t,x)\in \Omega_{T}$. Further, $\widetilde{m}_{k}$ can be chosen as 
    \begin{equation}\label{eq:tildemk-explicit}
\widetilde m_k(t,x)
=
-\frac{1}{4i}
\int_t^T
\mathcal L^{\ast}_{a_2,b_2}\widetilde m_{k-1}
\bigl(s,x+(t-s)\omega\bigr)
\exp\left(
-\frac12
\int_t^s
a_2\bigl(x+(t-r)\omega\bigr)\,dr
\right)
\,ds
\end{equation}
 for $(t,x)\in\Omega_{T}$ and $1\leq k \leq N$. The correction term $R_0(t,x,\tau)$ satisfies $R_0(T,x,\tau)=\partial_t R_0(T,x,\tau)=0$ for $x\in\Omega$, together with the estimate
\begin{equation}\label{eq: r_{(0)}}
 \|R_0\|_{W^{1,\infty}(\Omega_{T})}\leq C\tau^{\,m-N}\|\varphi\|_{H^{m+2+2N}(\mathbb{R}^{n})},
\end{equation}
where constant $C>0$ is independent of $\tau$ and $\varphi$.   
\end{lemma}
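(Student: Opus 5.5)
We would prove the lemma by the standard WKB construction: a phase solving the eikonal equation, amplitudes determined by transport equations, and a remainder controlled by energy estimates. The structure is the same as for Lemma~\ref{Asymptotic solutions for IBVP}, but time is reversed, the phase is $-2\tau$ instead of $\tau$, and the sign of the damping in $\mathcal L^{\ast}_{a_2,b_2}$ is flipped.

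\textbf{Step 1: conjugation.} Set $\psi(t,x):=-2\tau(t+x\cdot\omega)$. Since $\psi_t^2-|\nabla_x\psi|^2=4\tau^2-4\tau^2=0$ and $\Box\psi=0$, a direct computation gives, for smooth $m$,
\begin{align}
\mathcal L^{\ast}_{a_2,b_2}\bigl(e^{i\psi}m\bigr)
= e^{i\psi}\Bigl(-4i\tau\bigl(\partial_t-\omega\cdot\nabla_x-\tfrac12 a_2\bigr)m+\mathcal L^{\ast}_{a_2,b_2}m\Bigr).
\end{align}
We insert $\sum_{k=0}^N\tau^{-k}\widetilde m_k$ into this identity and collect powers of $\tau$. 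This gives the transport equations
\begin{align}
\bigl(\partial_t-\omega\cdot\nabla_x-\tfrac12 a_2\bigr)\widetilde m_0=0,\qquad
-4i\bigl(\partial_t-\omega\cdot\nabla_x-\tfrac12 a_2\bigr)\widetilde m_k+\mathcal L^{\ast}_{a_2,b_2}\widetilde m_{k-1}=0 \quad (1\le k\le N).
\end{align}
With these equations the only term left uncancelled is $e^{i\psi}\tau^{-N}\mathcal L^{\ast}_{a_2,b_2}\widetilde m_N$.

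\textbf{Step 2: the amplitudes.} For $\widetilde m_0$ we check directly that the formula \eqref{eq:tildem0-explicit} solves the homogeneous transport equation. First, $(\partial_t-\omega\cdot\nabla)\varphi(x+t\omega)=0$. Second,
\[
(\partial_t-\omega\cdot\nabla)\int_0^t a_2(x+s\omega)\,ds=a_2(x+t\omega)-\bigl(a_2(x+t\omega)-a_2(x)\bigr)=a_2(x).
\]
For $k\ge1$ we integrate along the characteristics $s\mapsto (s,x+(t-s)\omega)$ starting from $s=T$, with integrating factor $\exp\bigl(-\tfrac12\int a_2\bigr)$. This yields exactly \eqref{eq:tildemk-explicit}, and differentiating under the integral verifies the inhomogeneous transport equation.

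Support: the hypothesis $(\operatorname{supp}\varphi\pm T\omega)\cap\overline\Omega=\emptyset$ and compactness imply that $\widetilde m_0$ vanishes for $(t,x)\in\overline{\Omega}$ with $t$ near $T$. Inductively, every $\widetilde m_k$ vanishes there as well, because the integral in \eqref{eq:tildemk-explicit} runs over $[t,T]$ along lines that stay in the region where $\widetilde m_{k-1}$ vanishes. Regularity: each step applies the second-order operator $\mathcal L^{\ast}_{a_2,b_2}$ once, so
\[
\|\widetilde m_k\|_{\mathcal E_{j}}\le C\|\varphi\|_{H^{j+2k}},
\]
as long as $j+2k$ stays below the regularity $K$ of $a_2,b_2$. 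The assumption $K>\tfrac n2+3m+4$ is what makes this available.

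\textbf{Step 3: the remainder.} We define $R_0$ as the solution of
\[
\mathcal L^{\ast}_{a_2,b_2}R_0=-\tau^{-N}e^{i\psi}\mathcal L^{\ast}_{a_2,b_2}\widetilde m_N \ \text{ in } \Omega_T,\qquad R_0|_{\Sigma}=0,\qquad R_0(T)=\partial_tR_0(T)=0.
\]
This is a linear wave problem run backward in time. The source vanishes near $t=T$, so all higher-order compatibility conditions at $t=T$ hold trivially, and the higher-order energy estimate gives $\|R_0\|_{\mathcal E_{m+1}}\le C\|F\|_{H^{m}(\Omega_T)}$ for the source $F$. Each derivative falling on $e^{i\psi}$ costs a factor $\tau$, hence
\[
\|F\|_{H^m}\le C\tau^{m-N}\|\widetilde m_N\|_{H^{m+2}}\le C\tau^{m-N}\|\varphi\|_{H^{m+2+2N}}.
\]
Finally, $m>n+1$ gives the Sobolev embedding $\mathcal E_{m+1}\hookrightarrow W^{1,\infty}(\Omega_T)$, which yields \eqref{eq: r_{(0)}}.

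\textbf{Main obstacle.} We expect the main difficulty to be the derivative bookkeeping. One has to track that each $\widetilde m_k$ loses exactly two derivatives of $\varphi$ and that the coefficients $a_2,b_2\in H^K$ are regular enough through all $N$ steps. One also has to justify the higher-order backward energy estimate in $\mathcal E_{m+1}$ with vanishing lateral data, which relies on the vanishing of the source near $t=T$.
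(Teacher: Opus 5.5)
Your proposal is correct, and it is the standard WKB construction behind this lemma; the paper does not reprove the lemma but quotes it from \cite{bhardwaj2026reconstructionpotentialdampingcoefficients}. Your conjugation identity and transport equations check out, and solving them backward from $t=T$ along $s\mapsto(s,x+(t-s)\omega)$ reproduces \eqref{eq:tildem0-explicit} and \eqref{eq:tildemk-explicit} exactly, including the factor $-\frac{1}{4i}$ and the weight $\exp(-\frac12\int_t^s a_2)$. Your backward $\mathcal E_{m+1}$ energy estimate, with $m$ derivatives landing on $e^{i\psi}$, then gives precisely the stated bound $C\tau^{m-N}\|\varphi\|_{H^{m+2+2N}(\mathbb R^n)}$ (for $N>m+1$ the constant depends on higher norms of the smooth coefficients $a_2,b_2\in C_c^\infty(\Omega)$ rather than on $M$ alone, which the statement permits).
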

\section{Stability of the damping coefficient}\label{sec:identities}
We begin with the stability analysis of the damping coefficient. The main
goal of this section is to derive a stability estimate for the difference
$a^{(12)} := a_1-a_2$ in terms of $\eta$ described in \eqref{eq:def-eta-main}. The argument is based on the geometric optics
solutions constructed in Lemmas \ref{lemma: geom opt sol v_{1}} and \ref{lemma: geom opt sol v}, the associated integral identity, and a
localization procedure for the resulting weighted line integrals. The main
result of this section is stated in the following proposition.
\begin{proposition}[Stability for the damping coefficient]\label{prop-damping stability}
Let $m$ be an integer with $m>n+1$, $M>0$, and assume that
$(a_j,b_j,q_j)\in\mathcal{A}(M)$  for  $j=1,2.$
Suppose, in addition, that
$T>\operatorname{diam}(\Omega).$
Then there exist constant $C>0$, depending only on
$\Omega$, $T$ and $M$, such that, for all  $\eta\geq0$,
\begin{align}\label{eq;:main-stability-a}
\left\|a_{1}- a_{2}\right\|_{L^\infty(\Omega)}
\leq
C\eta^{\mu_{a}},
\end{align}
where the H\"older exponent $\mu_a\in(0,1)$ is given by
\[
\mu_{a} := \frac{2(2m-n-1)}{3(2m+1)(2m+5)(m+3)}.
\]
\end{proposition}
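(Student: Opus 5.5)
The argument proceeds in four steps: pass from the nonlinear DN map to the linearized one, derive an integral identity, test it with the geometric optics solutions of Lemmas \ref{lemma: geom opt sol v_{1}} and \ref{lemma: geom opt sol v}, and invert the resulting ray transform with a localization/interpolation argument.

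\textbf{Step 1: linearization.} For $f\in\mathcal K_{m+1}$ with $\|f\|_{H^{m+1}(\Sigma)}=1$ and small $\epsilon>0$, write $u_j^\epsilon$ for the solution of \eqref{equation; IBVP} with data $\epsilon f$ and coefficients $(a_j,b_j,q_j)$. Using Theorem \ref{main_thm:welposedness} and the algebra property \eqref{eq:Banach_algebra}, the difference $w_j=u_j^\epsilon-\epsilon v_j$, where $v_j$ solves \eqref{eqn;v}, satisfies $\|w_j\|_{\mathcal E_{m+1}}\le C\epsilon^2$. Hence
\[
\|\partial_\nu(v_1-v_2)\|_{L^2(\Sigma)}\le C\bigl(\epsilon^{-1}\eta+\epsilon\bigr).
\]
Choosing $\epsilon=\eta^{1/2}$ (admissible when $\eta$ is small; otherwise the claim is trivial by the a priori bound $M$) gives $\|(\Lambda^{lin}_1-\Lambda^{lin}_2)f\|\le C\eta^{1/2}\|f\|_{H^{m+1}}$.

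\textbf{Step 2: integral identity.} Set $v=v_1-v_2$ with zero boundary data. Then
\[
\mathcal L_{a_2,b_2}v=-a^{(12)}\partial_tv_1-b^{(12)}v_1 .
\]
Multiply by the backward solution $v_0$ of \eqref{eq;:backward} and integrate by parts, obtaining
\[
\int_{\Omega_T}\bigl(a^{(12)}\partial_tv_1+b^{(12)}v_1\bigr)v_0=\int_\Sigma \partial_\nu v\,v_0 ,
\]
bounded by $C\eta^{1/2}\|v_1|_\Sigma\|_{H^{m+1}}\|v_0\|_{L^2(\Sigma)}$. Insert the geometric optics solutions with the same $\varphi$ and $\omega$. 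Since $\|v_1\|_{H^{m+1}(\Sigma)}\lesssim\tau^{m+1}\|\varphi\|_{H^{m+3}}$ (controlled via the explicit form and energy estimates), the phases cancel, and the leading term is
\[
i\tau\int a^{(12)}(x)\varphi^2(x+t\omega)\exp\Bigl(-\tfrac12\int_0^t a^{(12)}(x+s\omega)\,ds\Bigr)\,dx\,dt .
\]
The remainder estimates \eqref{eq: r_{(1)}} and \eqref{eq: estimate of r_{(2)}} bound everything else by $C\|\varphi\|_{H^3}^2$. Dividing by $\tau$ yields
\[
\Bigl|\int_{\R^n}\varphi^2(y)\int_0^T a^{(12)}(y-t\omega)e^{-\frac12\int_0^t a^{(12)}(y-(t-s)\omega)ds}\,dt\,dy\Bigr|\le C\bigl(\tau^{-1}+\tau^{m+1}\eta^{1/2}\bigr)\|\varphi\|_{H^{m+3}}^2 .
\]
Because $T>\operatorname{diam}\Omega$ and $\varphi$ is supported outside $\overline\Omega$, the line integral is over the whole line. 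Moreover the integrand is an exact derivative: $a^{(12)}e^{-\frac12\int a^{(12)}}=-2\frac{d}{dt}e^{-\frac12\int a^{(12)}}$. The inner integral therefore equals $2\bigl(1-\exp(-\tfrac12\mathcal R a^{(12)}(y,\omega))\bigr)$, where $\mathcal R$ is the X-ray transform. This nonlinearity is harmless, since $|1-e^{-s/2}|\ge c|s|$ for $|s|\le C(M)$.

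\textbf{Step 3: localization.} Take $\varphi=\varphi_h(\cdot-y_0)$, a mollifier of width $h$ centered at $y_0\notin\overline\Omega$, so that $\|\varphi_h\|_{H^{k}}^2\sim h^{-n-2k}$. Lipschitz continuity of $\mathcal R a^{(12)}$ in $y$ then gives
\[
|\mathcal R a^{(12)}(y_0,\omega)|\le C\bigl(h+h^{-n-2(m+3)}(\tau^{-1}+\tau^{m+1}\eta^{1/2})\bigr).
\]
Optimizing first in $\tau$ and then in $h$ gives $\|\mathcal Ra^{(12)}\|_{L^\infty}\le C\eta^{\beta}$ for an explicit $\beta$.

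\textbf{Step 4: inversion.} Convert the ray transform bound to $a^{(12)}$ through the standard $H^{-1/2}$ stability $\|g\|_{H^{-1/2}}\le C\|\mathcal Rg\|_{L^2}$, and interpolate with the a priori $H^K$ bound together with Sobolev embedding ($K>n/2+3m+4$) to reach $L^\infty$. The exponent $\frac{2(2m-n-1)}{3(2m+1)(2m+5)(m+3)}$ suggests exactly these successive optimizations, with the factor $(2m-n-1)$ arising from the interpolation $L^\infty\subset H^{s}$ with $s>n/2$.

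\textbf{Main obstacle.} The hardest step is the $\tau$-bookkeeping. It requires the boundary-trace bound for $v_1$ in $H^{m+1}(\Sigma)$, which the well-posedness class demands, and this forces a growth of order $\tau^{m+1}$. Balancing that growth against the $\tau^{-1}$ remainder while tracking the $h$-dependence of the norms of $\varphi$ is what fixes the final exponent. I would first try to estimate $v_1|_\Sigma$ directly from the explicit form \eqref{eq: v_{(1)}}, since $\varphi(x+t\omega)$ is explicit on $\Sigma$ and $R$ vanishes there.
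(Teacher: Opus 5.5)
Your architecture matches the paper's: linearize, use the first-order identity against the backward solution $v_0$, insert geometric optics solutions with cancelling phases, turn the attenuated integral into $2\bigl(1-e^{-\frac12\int a^{(12)}}\bigr)$ by the exact-derivative trick, localize using $|e^{s}-1|\ge e^{-M_0}|s|$, then apply $H^{-1/2}$ stability of the X-ray transform, interpolation and Sobolev embedding. The gap is quantitative, and it matters because the proposition asserts the specific exponent $\mu_a$. With your own inputs, Step~3 gives $|\mathcal R a^{(12)}(y_0,\omega)|\le C\bigl(h+h^{-n-2(m+3)}(\tau^{-1}+\tau^{m+1}\eta^{1/2})\bigr)$. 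Optimizing yields $\tau=\eta^{-1/(2(m+2))}$ and $h=\eta^{1/(2(m+2)(2m+n+7))}$, so $\beta=\frac{1}{2(m+2)(2m+n+7)}$, and the later interpolation can only lower this. For $n=2$, $m=10$ this gives $\beta=1/696<34/20475=\mu_a$, and in general $\beta\sim 1/(4m^2)$ whereas $\mu_a\sim 1/(3m^2)$. So your plan yields a H\"older estimate, but not the stated one. Your own bookkeeping does not support the claim that these optimizations reproduce exactly $\mu_a$.

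The missing idea is in Step~1. Stopping at first order leaves an $O(\varepsilon^2)$ error and caps the linearized rate at $\eta^{1/2}$; the ratio $\frac{1/2}{2/3}=\frac34$ is exactly the asymptotic loss above. The paper instead expands to second order, $u^{(j)}_{\varepsilon f}=\varepsilon v^{(j)}+\varepsilon^2 w^{(j)}+\mathcal R^{(j)}_\varepsilon$ with $\|\partial_\nu\mathcal R^{(j)}_\varepsilon\|_{H^m(\Sigma)}\le C|\varepsilon|^3\|f\|^3_{H^{m+1}(\Sigma)}$. It then uses the odd combination $u^{(j)}_{\varepsilon f}-u^{(j)}_{-\varepsilon f}=2\varepsilon v^{(j)}+\mathcal R^{(j)}_\varepsilon-\mathcal R^{(j)}_{-\varepsilon}$, which removes the quadratic term. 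This gives $\|\partial_\nu(v^{(1)}-v^{(2)})\|_{L^2(\Sigma)}\le C(\eta/\varepsilon+\varepsilon^2\|f\|^3_{H^{m+1}(\Sigma)})$, hence $C\eta^{2/3}\|f\|_{H^{m+1}(\Sigma)}$ with $\varepsilon=\eta^{1/3}/\|f\|_{H^{m+1}(\Sigma)}$.

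The rest of the bookkeeping that produces $\mu_a$ is as follows. The weighted bound is $C(\tau^{-1}+\eta^{2/3}\tau^{m+2})\|\varphi\|^2_{H^{m+2}}$. The localizing bump is $L^2$-normalized, $\chi_h=h^{-n/2}\chi((\cdot-y_0)/h)$, so the loss is $h^{-2(m+2)}$. The extra $h^{-n}$ in your bound is spurious: your mollifier has $\int\varphi_h^2\sim h^{-n}$, and you never divide this out. The choices $h=\tau^{-1/(2m+5)}$ and $\tau=\eta^{-2/(3(m+3))}$ give $\mu_0=\frac{2}{3(m+3)(2m+5)}$. Finally, interpolating $H^{-1/2}$ against $H^{m}$ (not $H^K$) into $H^{(n+1)/2}$ is the source of the factor $\frac{2m-n-1}{2m+1}$; against $H^K$ you would get $\frac{2K-n-1}{2K+1}$ instead.

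Two smaller repairs are also needed. First, the segment $\{y-t\omega:0\le t\le T\}$ only gives a half-line integral: for nonconvex $\Omega$, the line through $y\notin\overline\Omega$ can meet $\Omega$ on both sides of $y$. Second, the condition $(\operatorname{supp}\varphi\pm T\omega)\cap\overline\Omega=\emptyset$ fails for arbitrary $y_0\notin\overline\Omega$. The paper handles both by taking $y_0$ in a thin shell of width $r<\frac{T-\operatorname{diam}(\Omega)}{3}$ around $\Omega$, adding the half-lines for $\pm\omega$, and sliding the base point along each line.
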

The proof of Proposition~\ref{prop-damping stability} is based on a sequence
of auxiliary results. We first establish an integral identity for the first-order linearized equation, which will subsequently be combined with geometric optics solutions and boundary estimates to obtain the desired stability estimates. We begin with the following lemma.

\begin{lemma}[First order identity]\label{lem:identity1}
Let $a_{j},b_{j}\in C_{c}^{\infty}(\Omega)$ for $j=1,2$, $f\in\mathcal K_{m+1}$ and let $v^{(j)}$ solve the following IBVP
\begin{align}\label{eqn:v}
\begin{cases}
\mathcal{L}_{a_{j},b_{j}}v^{(j)}(t,x)=0,  & (t,x)\in\Omega_{T},\\
v^{(j)}(t,x)=f(t,x),&(t,x)\in\Sigma,\\
v^{(j)}(0,x)=\partial_{t}v^{(j)}(0,x)=0,&x\in\Omega,
\end{cases}
\end{align}
and $v_{0}$ solves the backward problem \eqref{eq;:backward}.
Then, we have the following integral identity
\begin{equation}\label{eq:id1}
\int_{\Omega_T}\Bigl(a^{(12)}(x)\,\partial_t v^{(1)}(t,x)+b^{(12)}(x)\,v^{(1)}(t,x)\Bigr)v_{0}(t,x)\,\mathrm{d} x\,\mathrm{d} t
=\int_{\Sigma}\partial_\nu\bigl( v^{(1)}- v^{(2)}\bigr)(t,x)\,v_0(t,x)\,\mathrm{d} S_x\,\mathrm{d}t .
\end{equation}
\end{lemma}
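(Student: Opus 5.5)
Set $w:=v^{(1)}-v^{(2)}$, with $a^{(12)}=a_1-a_2$ and $b^{(12)}=b_1-b_2$. We derive the equation for $w$, pair it with the backward solution $v_0$, and integrate by parts in space and time.

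\textbf{Equation for $w$.} Subtracting the two equations in \eqref{eqn:v} gives
\[
\mathcal L_{a_2,b_2}w=\Box w+a_2\partial_t w+b_2 w=-\bigl(a^{(12)}\partial_t v^{(1)}+b^{(12)}v^{(1)}\bigr)\quad\text{in }\Omega_T .
\]
To see this, write $\mathcal L_{a_2,b_2}v^{(1)}=\mathcal L_{a_1,b_1}v^{(1)}-a^{(12)}\partial_t v^{(1)}-b^{(12)}v^{(1)}$ and use $\mathcal L_{a_j,b_j}v^{(j)}=0$. Moreover $w=0$ on $\Sigma$ and $w(0,\cdot)=\partial_t w(0,\cdot)=0$ in $\Omega$.

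\textbf{Pairing with $v_0$.} Multiply this equation by $v_0$ and integrate over $\Omega_T$. We move every derivative from $w$ onto $v_0$.
\begin{itemize}
\item For $\partial_t^2 w\,v_0$ and $a_2\partial_t w\,v_0$, integrate by parts in $t$. The boundary terms at $t=0$ vanish by the initial conditions for $w$, and those at $t=T$ vanish by the final conditions on $v_0$ in \eqref{eq;:backward}. This gives $\int w\,\partial_t^2 v_0$ and $-\int w\,a_2\partial_t v_0$; since $a_2$ is time-independent, no extra term appears.
\item For $-\Delta w\,v_0$, apply Green's formula in $x$:
\[
-\int_{\Omega_T}\Delta w\,v_0=-\int_{\Omega_T}w\,\Delta v_0-\int_\Sigma \partial_\nu w\,v_0+\int_\Sigma w\,\partial_\nu v_0 .
\]
The last term vanishes because $w|_\Sigma=0$.
\end{itemize}
Collecting terms,
\[
\int_{\Omega_T}\mathcal L_{a_2,b_2}w\,v_0=\int_{\Omega_T}w\,\mathcal L^{\ast}_{a_2,b_2}v_0-\int_\Sigma\partial_\nu w\,v_0=-\int_\Sigma\partial_\nu w\,v_0 ,
\]
using $\mathcal L^{\ast}_{a_2,b_2}v_0=0$. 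Combined with the equation for $w$, the two minus signs cancel and we obtain exactly \eqref{eq:id1}.

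\textbf{Regularity.} The integrations by parts need enough smoothness. Since $f\in\mathcal K_{m+1}$ satisfies the compatibility conditions and the coefficients are smooth, the linear theory underlying Theorem~\ref{main_thm:welposedness} gives $v^{(j)}\in\mathcal E_{m+1}$, which is ample. The backward solution $v_0$ lies in at least $H^1$ with a trace of $\partial_\nu v_0$. If it is less regular, approximate it by smooth solutions; the boundary term $\int_\Sigma w\,\partial_\nu v_0$ causes no trouble because $w|_\Sigma=0$.

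The computation is routine. The only care needed is the sign bookkeeping and matching the adjoint $\mathcal L^{\ast}_{a_2,b_2}$ to the one defined in \eqref{eq;:backward}, including the sign flip of the damping term.
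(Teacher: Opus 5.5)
Your proposal is correct and follows essentially the same route as the paper. You derive $\mathcal L_{a_2,b_2}(v^{(1)}-v^{(2)})=-a^{(12)}\partial_t v^{(1)}-b^{(12)}v^{(1)}$, pair it with $v_0$, integrate by parts in $t$ using the initial conditions and the final conditions on $v_0$, apply Green's formula in $x$ using the zero Dirichlet trace, and then invoke $\mathcal L^{\ast}_{a_2,b_2}v_0=0$, all with the correct signs. Your extra regularity remark goes beyond what the paper states but does not change the argument.
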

\begin{proof}
    Observe that $v^{(12)}:=v^{(1)}-v^{(2)}$ solves the following IBVP
\begin{equation}\label{eq:difference}
        \begin{cases}
          \left(\Box+ a_{2}(x)\partial_{t}+b_{2}(x) \right)v^{(12)}(t,x)=-a^{(12)}(x)\partial_{t}v^{(1)}(t,x)-b^{(12)}(x)v^{(1)}(t,x), &(t,x)\in\Omega_{T},\\
          v^{(12)}(t,x)=0, &(t,x)\in \Sigma,\\
          v^{(12)}(0,x)=\partial_{t}v^{(12)}(0,x)=0, &x\in\Omega .
        \end{cases}
\end{equation}
Multiplying the governing equation of the above IBVP \eqref{eq:difference} by $v_{0}$ and integrating over $\Omega_{T}$ gives
    \begin{equation}\label{eq:mult}
    \int_{\Omega_{T}}\left(\Box +a_{2}(x)\partial_{t}+b_{2}(x)\right)v^{(12)}\,v_{0}\,\mathrm dx\,\mathrm dt
    =-\int_{\Omega_{T}}\left(a^{(12)}(x)\partial_{t}v^{(1)}+b^{(12)}(x)v^{(1)}\right)v_{0}\,\mathrm dx\,\mathrm dt.
    \end{equation}
We next simplify the terms on the left-hand side of the preceding identity. Using integration by parts, together with the initial conditions
$v^{(12)}(0,\cdot)=\partial_t v^{(12)}(0,\cdot)=0$ and the final conditions
$v_0(T,\cdot)=\partial_t v_0(T,\cdot)=0$ in $\Omega$, we obtain
\begin{align}\label{eqn:idt_1}
\begin{split}
\int_{\Omega_{T}}\partial_{t}^{2}v^{(12)}\,v_{0}\,\mathrm dx\,\mathrm dt
&=\int_{\Omega}\Big[\partial_{t}v^{(12)}\,v_{0}-v^{(12)}\,\partial_{t}v_{0}\Big]_{t=0}^{t=T}\mathrm dx+\int_{\Omega_{T}}v^{(12)}\,\partial_{t}^{2}v_{0}\,\mathrm dx\,\mathrm dt\\
&=\int_{\Omega_{T}}v^{(12)}\,\partial_{t}^{2}v_{0}\,\mathrm dx\,\mathrm dt.
\end{split}
\end{align}
By Green's identity, together with the fact that $v^{(12)}=0$ on $\Sigma$, we obtain
\begin{align}\label{eqn:idt_2}
\begin{split}
-\int_{\Omega_{T}}\Delta v^{(12)}\,v_{0}\,\mathrm dx\,\mathrm dt
&=-\int_{\Omega_{T}}v^{(12)}\,\Delta v_{0}\,\mathrm dx\,\mathrm dt-\int_{\Sigma}\left(\partial_{\nu}v^{(12)}\,v_{0}-v^{(12)}\,\partial_{\nu}v_{0}\right)\mathrm dS_{x}\,\mathrm dt\\
&=-\int_{\Omega_{T}}v^{(12)}\,\Delta v_{0}\,\mathrm dx\,\mathrm dt-\int_{\Sigma}\partial_{\nu}v^{(12)}\,v_{0}\,\mathrm dS_{x}\,\mathrm dt.
\end{split}
\end{align}
 Next, applying integration by parts with respect to $t$ and using
$v^{(12)}(0,\cdot)=0$ and $v_0(T,\cdot)=0$, we obtain
   \begin{align}\label{eqn:idt_3}
\begin{split}
\int_{\Omega_{T}}a_{2}(x)\,\partial_{t}v^{(12)}\,v_{0}\,\mathrm dx\,\mathrm dt
&=\int_{\Omega}a_{2}(x)\Big[v^{(12)}\,v_{0}\Big]_{t=0}^{t=T}\mathrm dx-\int_{\Omega_{T}}v^{(12)}\,a_{2}(x)\,\partial_{t}v_{0}\,\mathrm dx\,\mathrm dt\\
&=-\int_{\Omega_{T}}v^{(12)}\,a_{2}(x)\,\partial_{t}v_{0}\,\mathrm dx\,\mathrm dt.
\end{split}
\end{align}
Combining the three identities \eqref{eqn:idt_1}, \eqref{eqn:idt_2}, and
\eqref{eqn:idt_3}, the identity \eqref{eq:mult} can be rewritten as
    \begin{align*}
    \int_{\Omega_{T}}v^{(12)}\left(\Box -a_{2}(x)\partial_{t}+b_{2}(x)\right)v_{0}\,\mathrm dx\,\mathrm dt-\int_{\Sigma}\partial_{\nu}v^{(12)}\,v_{0}\,\mathrm dS_{x}\,\mathrm dt
    = -\int_{\Omega_{T}}\left(a^{(12)}(x)\partial_{t}v^{(1)}+b^{(12)}(x)v^{(1)}\right)v_{0}\,\mathrm dx\,\mathrm dt.
    \end{align*}
    Since $v_{0}$ is a solution of the backward problem~\eqref{eq;:backward}, the first integral on the left-hand side is equal to zero. Hence, the desired identity~\eqref{eq:id1} follows, completing the proof of the lemma.
    \end{proof}

Next, we substitute the geometric optics solution from
Lemma~\ref{lemma: geom opt sol v_{1}} with $(a,b)=(a_1,b_1)$, which we denote by $v^{(1)}$, together with the solution $v_0$ from Lemma~\ref{lemma: geom opt sol v}, into the integral identity~\eqref{eq:id1}, where $A_1^{+}$ and $R_1$ denote the amplitude \eqref{eq:def-Aplus} and the remainder corresponding to $a=a_1$. This gives 
\begin{equation} \label{eq:id1-GO}
\begin{aligned}
&\mathrm{i}\tau
\int_{\Omega_T}
a^{(12)}(x)\,
\varphi^2(x+t\omega)
A_1^{+}A^{-}
\,\mathrm dx\,\mathrm dt
+
\int_{\Omega_T}
a^{(12)}(x)\,
\varphi(x+t\omega)A^{-}
\partial_t\!\left(\varphi A_1^{+}\right)
\,\mathrm dx\,\mathrm dt\\
&\qquad+
\int_{\Omega_T}
b^{(12)}(x)\,
\varphi^2(x+t\omega)
A_1^{+}A^{-}
\,\mathrm dx\,\mathrm dt
+\mathfrak E(\tau)
=
\int_{\Sigma}
\partial_\nu\!\left(v^{(1)}-v^{(2)}\right)
v_0\,
\mathrm dS_x\,\mathrm dt ,
\end{aligned}
\end{equation}
where the remainder term $\mathfrak E(\tau)$ is given by
\begin{equation}\label{eq:remainder}
\begin{aligned}
\mathfrak E(\tau)
={}&
\int_{\Omega_T}a^{(12)}(x)
\Bigl[
\partial_t(\varphi A_1^{+})
e^{\mathrm{i}\tau(x\cdot\omega+t)}R_0
+
\mathrm{i}\tau\varphi A_1^{+}
e^{\mathrm{i}\tau(x\cdot\omega+t)}R_0
+\varphi A^{-}
e^{-\mathrm{i}\tau(x\cdot\omega+t)}
\partial_tR_1
+\partial_tR_1\,R_0
\Bigr]
\,\mathrm dx\,\mathrm dt\\
&+
\int_{\Omega_T}b^{(12)}(x)
\Bigl[
\varphi A_1^{+}
e^{\mathrm{i}\tau(x\cdot\omega+t)}R_0
+
\varphi A^{-}
e^{-\mathrm{i}\tau(x\cdot\omega+t)}R_1
+R_1R_0
\Bigr]
\,\mathrm dx\,\mathrm dt ,
\end{aligned}
\end{equation}
and we denote by $\mathfrak X$ the sum of the two lower-order terms on
the left-hand side of \eqref{eq:id1-GO}, given by
\begin{equation}\label{eq:def-X}
\begin{aligned}
\mathfrak X
:=\int_{\Omega_T}
a^{(12)}(x)
\varphi(x+t\omega)A^{-}
\partial_t\!\left(\varphi A_1^{+}\right)
\,\mathrm dx\,\mathrm dt
+
\int_{\Omega_T}
b^{(12)}(x)
\varphi^2(x+t\omega)
A_1^{+}A^{-}
\,\mathrm dx\,\mathrm dt .
\end{aligned}
\end{equation}
The bounds for $\mathcal E(\tau)$ and $\mathfrak X$ are given by the following lemma.
\begin{lemma}\label{lem:remainder-estimates}
Let $m>n+1$, $(a_j,b_j,q_j)\in\mathcal A(M)$ for $j=1,2$. Suppose
$\omega\in\mathbb S^{n-1}$, $\tau\geq1$, and 
$\varphi\in C_c^{\infty}(\R^n)$ satisfy the support assumptions of
Lemmas~\ref{lemma: geom opt sol v_{1}} and~\ref{lemma: geom opt sol v}. Then, $\mathfrak E(\tau)$ and $\mathfrak X$ as in \eqref{eq:remainder} and
\eqref{eq:def-X} satisfy
\begin{equation}\label{eq:E-K}
|\mathfrak E(\tau)|
+
|\mathfrak X|
\leq
C\|\varphi\|_{H^3(\R^n)}^2,
\end{equation}
where $C=C(\Omega,T,M)>0$ is independent of $\tau$ and $\varphi$.
\end{lemma}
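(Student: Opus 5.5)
The bound is obtained term by term: Cauchy--Schwarz on every integral, the $L^2$ remainder estimates \eqref{eq: r_{(1)}} and \eqref{eq: estimate of r_{(2)}} from Lemmas~\ref{lemma: geom opt sol v_{1}} and~\ref{lemma: geom opt sol v}, and uniform pointwise bounds on the coefficients and amplitudes. Since $K>\frac n2+3m+4$, Sobolev embedding gives $\|a_j\|_{W^{1,\infty}}+\|b_j\|_{L^\infty}\le CM$. Hence $a^{(12)},b^{(12)}$ are bounded by $2CM$ in $L^\infty(\Omega)$. For $(t,x)\in\Omega_T$ the amplitudes satisfy $e^{-CMT/2}\le A_1^{+},A^{-}\le e^{CMT/2}$. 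Differentiating under the integral, $\partial_tA_1^{+}=-\tfrac12 a_1(x+t\omega)A_1^{+}$, so it is also bounded by a constant depending only on $T,M$. All these bounds are uniform in $\omega$ and $\tau$.

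For $\mathfrak X$, I write $\partial_t(\varphi A_1^{+})=(\omega\cdot\nabla\varphi)(x+t\omega)A_1^{+}+\varphi(x+t\omega)\partial_tA_1^{+}$. Every integrand is then bounded by $C\bigl(|\varphi|^2+|\varphi||\nabla\varphi|\bigr)(x+t\omega)$. For each fixed $t$ the change of variables $y=x+t\omega$ has unit Jacobian, so integrating over $\Omega_T$ gives
\begin{equation}
|\mathfrak X|\le CT\bigl(\|\varphi\|_{L^2}^2+\|\varphi\|_{L^2}\|\nabla\varphi\|_{L^2}\bigr)\le C\|\varphi\|_{H^3(\R^n)}^2 .
\end{equation}

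For $\mathfrak E(\tau)$ I treat the seven terms separately, using $|e^{\pm\mathrm i\tau(\cdot)}|=1$. The terms $\partial_t(\varphi A_1^{+})e^{\mathrm i\tau(\cdot)}R_0$, $\varphi A_1^{+}e^{\mathrm i\tau(\cdot)}R_0$ and $\varphi A^{-}e^{-\mathrm i\tau(\cdot)}R_1$ are each bounded by $C\|\varphi\|_{H^1}\|R_{0}\|_{L^2(\Omega_T)}$ or $C\|\varphi\|_{H^1}\|R_{1}\|_{L^2(\Omega_T)}$. By the remainder estimates, these are at most $C\tau^{-1}\|\varphi\|_{H^3}^2\le C\|\varphi\|_{H^3}^2$ since $\tau\ge1$.

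The term $\varphi A^{-}e^{-\mathrm i\tau(\cdot)}\partial_tR_1$ is bounded by $C\|\varphi\|_{L^2}\|\partial_tR_1\|_{L^2}\le C\|\varphi\|_{H^3}^2$.

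The products $R_1R_0$ and $\partial_tR_1\,R_0$ are bounded by $\|R_1\|_{L^2}\|R_0\|_{L^2}$ and $\|\partial_tR_1\|_{L^2}\|R_0\|_{L^2}$. Both are at most $C\|\varphi\|_{H^3}^2$; note that $\|\partial_tR_1\|$ carries no $\tau$ gain, but $\|R_0\|$ carries $\tau^{-1}$.

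The only delicate term is $\mathrm i\tau\,\varphi A_1^{+}e^{\mathrm i\tau(\cdot)}R_0$, which carries the explicit large factor $\tau$. Cauchy--Schwarz gives
\begin{equation}
\tau\,C\|\varphi\|_{L^2}\|R_0\|_{L^2(\Omega_T)}\le C\|\varphi\|_{H^3}\cdot\tau\|R_0\|_{L^2(\Omega_T)}\le C\|\varphi\|_{H^3}^2 ,
\end{equation}
and the factor $\tau$ is absorbed exactly by the weighted norm $\tau\|R_0\|_{L^2}$ in \eqref{eq: estimate of r_{(2)}}. This is the step where the precise form of the remainder estimate is essential. The main point to verify here is that the constants of Lemmas~\ref{lemma: geom opt sol v_{1}} and~\ref{lemma: geom opt sol v} depend only on $(\Omega,T,M)$ and not on $\omega$ or $\tau$, which is how they are stated. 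Summing all the bounds yields \eqref{eq:E-K}.
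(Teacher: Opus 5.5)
Your proposal is correct and follows the paper's proof essentially line for line. It applies Cauchy--Schwarz to each of the seven terms of $\mathfrak E(\tau)$ and to $\mathfrak X$, using $L^\infty$ bounds on $a^{(12)}$, $b^{(12)}$, $A_1^{+}$, $A^{-}$ and $\partial_tA_1^{+}$ together with $\tau\|R\|_{L^2(\Omega_T)}+\|\partial_tR\|_{L^2(\Omega_T)}\le C\|\varphi\|_{H^3}$, and it absorbs the explicit factor $\tau$ in the $\mathrm{i}\tau\varphi A_1^{+}e^{\mathrm{i}\tau(\cdot)}R_0$ term by the $\tau^{-1}$ decay of $R_0$, just as the paper does.
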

\begin{proof}
    The admissibility assumption \eqref{eq:admissible-coefficients}  implies $\|a^{(12)}\|_{L^\infty(\Omega)}+\|b^{(12)}\|_{L^\infty(\Omega)}\leq CM$. Moreover, from the explicit expressions of $A_1^{+}$ and $A^{-}$, we have
\begin{alignat}{2}
\label{eq:go-amplitudes}
\|A_1^{+}\|_{L^{\infty}(\Omega_T)}+\|A^{-}\|_{L^{\infty}(\Omega_T)}
   &\le Ce^{TM/2}, &\qquad
\|\partial_tA_1^{+}\|_{L^{\infty}(\Omega_T)}
   &\le CMe^{TM/2},\\
\label{eq:go-remainders}
\|R_0\|_{L^{2}(\Omega_T)}+\|R_1\|_{L^{2}(\Omega_T)}
   &\le \frac{C}{\tau}\|\varphi\|_{H^{3}(\R^{n})}, &\qquad
\|\partial_tR_1\|_{L^{2}(\Omega_T)}
   &\le C\|\varphi\|_{H^{3}(\R^{n})}.
\end{alignat}
We shall also use
$\|\varphi(\cdot+t\omega)\|_{L^{2}(\Omega_T)}
 +\|\omega\cdot\nabla\varphi(\cdot+t\omega)\|_{L^{2}(\Omega_T)}
 \le C\|\varphi\|_{H^{3}(\R^{n})}$.

Next, we denote $\mathfrak E(\tau):=\sum_{k=1}^{7}\mathfrak E_k(\tau)$, where the terms $\mathfrak E_k(\tau)$ are numbered according to their
order of appearance in~\eqref{eq:remainder}.
We begin with $\mathfrak E_1(\tau)$. By the product rule, we have
\begin{equation}\label{eq:dt-amplitude}
\partial_t\!\left(
\varphi(x+t\omega)A_1^{+}(t,x)
\right)
=
(\omega\cdot\nabla\varphi)(x+t\omega)A_1^{+}(t,x)
+
\varphi(x+t\omega)\partial_tA_1^{+}(t,x).
\end{equation}
Hence, the Cauchy-Schwarz inequality along with \eqref{eq:go-amplitudes} and \eqref{eq:go-remainders} gives
\begin{align}\label{eq:E1-est}
\begin{split}
&|\mathfrak E_1(\tau)|
\leq
\|a^{(12)}\|_{L^\infty(\Omega)}
\left\|\partial_t(\varphi A_1^{+})\right\|_{L^2(\Omega_T)}
\|R_0\|_{L^2(\Omega_T)}\\
&\leq
C\Bigl(
\|A_1^{+}\|_{L^\infty(\Omega_T)}
\|\omega\cdot\nabla\varphi(\cdot+t\omega)\|_{L^2(\Omega_T)}
+
\|\partial_tA_1^{+}\|_{L^\infty(\Omega_T)}
\|\varphi(\cdot+t\omega)\|_{L^2(\Omega_T)}
\Bigr)
\|R_0\|_{L^2(\Omega_T)}\\
&\ \leq\
\frac{C}{\tau}\|\varphi\|_{H^3(\R^n)}^2.
\end{split}
\end{align}
For $\mathfrak E_2(\tau)$, the factor $\tau$ is compensated by the
$\tau^{-1}$ decay of $R_0$. Therefore, we have
\begin{align*}
|\mathfrak E_2(\tau)|
\leq
\tau\|a^{(12)}\|_{L^\infty(\Omega)}
\|A_1^{+}\|_{L^\infty(\Omega_T)}
\|\varphi(\cdot+t\omega)\|_{L^2(\Omega_T)}
\|R_0\|_{L^2(\Omega_T)}
\leq
C\|\varphi\|_{H^3( \R^n)}^2.
\end{align*}
Similarly, using the estimate for $\partial_tR_1$, we obtain
\begin{align*}
|\mathfrak E_3(\tau)|
&\leq
\|a^{(12)}\|_{L^\infty(\Omega)}
\|A^{-}\|_{L^\infty(\Omega_T)}
\|\varphi(\cdot+t\omega)\|_{L^2(\Omega_T)}
\|\partial_tR_1\|_{L^2(\Omega_T)}
 \leq
C\|\varphi\|_{H^3( \R^n)}^2.
\end{align*}
For the fourth term, we have
\begin{align*}
|\mathfrak E_4(\tau)|
\leq
\|a^{(12)}\|_{L^\infty(\Omega)}
\|\partial_tR_1\|_{L^2(\Omega_T)}
\|R_0\|_{L^2(\Omega_T)}
\leq
\frac{C}{\tau}
\|\varphi\|_{H^3(\mathbb R^n)}^2.
\end{align*}
We now consider the terms involving $b^{(12)}$. Proceeding in the
same manner, we obtain
\begin{align*}
|\mathfrak E_5(\tau)|
+
|\mathfrak E_6(\tau)|
\leq
\frac{C}{\tau}
\|\varphi\|_{H^3(\mathbb R^n)}^2.
\end{align*}
Finally, 
\begin{align}\label{eq:E7-est}
|\mathfrak E_7(\tau)|
\leq
\|b^{(12)}\|_{L^\infty(\Omega)}
\|R_1\|_{L^2(\Omega_T)}
\|R_0\|_{L^2(\Omega_T)}
\leq
\frac{C}{\tau^2}
\|\varphi\|_{H^3(\mathbb R^n)}^2.
\end{align}
From the estimates \eqref{eq:E1-est}--\eqref{eq:E7-est}, we conclude that
\begin{align}\label{eq:E-total}
|\mathfrak E(\tau)|
\leq
C
\left(
1+\frac{1}{\tau}
\right)
\|\varphi\|_{H^3(\mathbb R^n)}^2
\leq
C\|\varphi\|_{H^3(\mathbb R^n)}^2
\qquad \text{ for }\tau\geq1.
\end{align}
It remains to estimate $\mathfrak X$. Applying the Cauchy-Schwarz inequality and using \eqref{eq:admissible-coefficients}, \eqref{eq:go-amplitudes} and
\eqref{eq:dt-amplitude}, we obtain
\begin{align}\label{eq:X-estimate}
|\mathfrak X|
\leq
C\|\varphi\|_{H^3(\mathbb R^n)}^2.
\end{align}
Combining \eqref{eq:E-total} and \eqref{eq:X-estimate} gives \eqref{eq:E-K},
which completes the proof of the lemma.
\end{proof}
It remains to estimate the boundary term appearing on the right-hand
side of equation \eqref{eq:id1-GO}. For this purpose, we first establish the
following auxiliary lemma.
\begin{lemma}\label{lem:second-order-decomposition}
Let $m > n+1$, $(a_j,b_j,q_j)\in\mathcal{A}(M)$ for $j=1,2$ and $f\in \mathcal{K}_{m+1}$. Consider the semilinear IBVP
\begin{align}\label{eq:semilinear-epsilon}
\begin{cases}
\Box u^{(j)}_{\varepsilon f}+a_j(x)\partial_t u^{(j)}_{\varepsilon f}+b_j(x)u^{(j)}_{\varepsilon f}+q_j(x)\big(u^{(j)}_{\varepsilon f}\big)^2=0, & \text{in }\Omega_T,\\
u^{(j)}_{\varepsilon f}=\varepsilon f, & \text{on }\Sigma,\\
u^{(j)}_{\varepsilon f}(0,x)=\partial_tu^{(j)}_{\varepsilon f}(0,x)=0, & \text{in }\Omega.
\end{cases}
\end{align}
Let $v^{(j)}$ solve the linearized problem \eqref{eqn:v} and  $w^{(j)}$ solve
\begin{align}\label{eqn:w}
\begin{cases}
\Box w^{(j)}+a_j(x)\partial_t w^{(j)}+b_j(x)w^{(j)}=-q_j(x)\big(v^{(j)}\big)^2, & \text{in }\Omega_T,\\
w^{(j)}=0, & \text{on }\Sigma,\\
w^{(j)}(0,x)=\partial_tw^{(j)}(0,x)=0, & \text{in }\Omega.
\end{cases}
\end{align}
Then there exists a constant $C>0$, depending only on $\Omega$, $T$ and $M$, such that for every $\varepsilon\in\mathbb R$ with $|\varepsilon|\,\|f\|_{H^{m+1}(\Sigma)}\leq\delta$, the solution $u^{(j)}_{\varepsilon f}$ admits the decomposition 
\begin{equation}\label{eq:decomposition}
u^{(j)}_{\varepsilon f}=\varepsilon v^{(j)}+\varepsilon^2w^{(j)}+\mathcal R^{(j)}_{\varepsilon}.
\end{equation}
Moreover, the following estimate holds
\begin{align}\label{Decomposition_estimate}
\|\mathcal R^{(j)}_{\varepsilon}\|_{\mathcal{E}_{m+1}}+\|\partial_\nu\mathcal R^{(j)}_{\varepsilon}\|_{H^m(\Sigma)}\leq C|\varepsilon|^3\|f\|_{H^{m+1}(\Sigma)}^3.
\end{align}
\end{lemma}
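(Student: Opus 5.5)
We define the remainder directly by $\mathcal R^{(j)}_\varepsilon:=u^{(j)}_{\varepsilon f}-\varepsilon v^{(j)}-\varepsilon^2 w^{(j)}$ and estimate it through the linear IBVP it satisfies. Subtracting $\varepsilon$ times \eqref{eqn:v} and $\varepsilon^2$ times \eqref{eqn:w} from \eqref{eq:semilinear-epsilon}, we see that $\mathcal R^{(j)}_\varepsilon$ has zero Dirichlet data and zero initial data. It satisfies
\begin{equation}
\mathcal L_{a_j,b_j}\mathcal R^{(j)}_\varepsilon=-q_j\Bigl(\bigl(u^{(j)}_{\varepsilon f}\bigr)^2-\varepsilon^2\bigl(v^{(j)}\bigr)^2\Bigr)
=-q_j\bigl(u^{(j)}_{\varepsilon f}-\varepsilon v^{(j)}\bigr)\bigl(u^{(j)}_{\varepsilon f}+\varepsilon v^{(j)}\bigr).
\end{equation}
The plan is to bound the right-hand side in a space that the linear energy estimates (those underlying Theorem~\ref{main_thm:welposedness}) map into $\mathcal E_{m+1}$ together with the $H^m(\Sigma)$ normal trace. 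We use the standard estimate
\begin{equation}
\|z\|_{\mathcal E_{m+1}}+\|\partial_\nu z\|_{H^m(\Sigma)}\le C\bigl(\|F\|_{H^m(\Omega_T)}+\|f\|_{H^{m+1}(\Sigma)}\bigr)
\end{equation}
for $\mathcal L_{a,b}z=F$ with Dirichlet data $f$ and zero initial data, where $C=C(\Omega,T,M)$ and the smoothness of $a,b\in H^K$ is used.

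\textbf{Step 1: first-order bounds.} Linear theory gives $\|v^{(j)}\|_{\mathcal E_{m+1}}\le C\|f\|_{H^{m+1}(\Sigma)}$. Theorem~\ref{main_thm:welposedness}, applied with data $\varepsilon f$, gives $\|u^{(j)}_{\varepsilon f}\|_{\mathcal E_{m+1}}\le C|\varepsilon|\|f\|_{H^{m+1}(\Sigma)}$.

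\textbf{Step 2: second-order bound on $u-\varepsilon v$.} The difference $\rho:=u^{(j)}_{\varepsilon f}-\varepsilon v^{(j)}$ solves $\mathcal L_{a_j,b_j}\rho=-q_j\bigl(u^{(j)}_{\varepsilon f}\bigr)^2$ with zero boundary and initial data. The algebra property \eqref{eq:Banach_algebra} and $\|q_j\|_{C^{m}}\le CM$ (Sobolev embedding, since $K$ is large) show that the source has $\mathcal E_m$-norm at most $C\varepsilon^2\|f\|^2_{H^{m+1}(\Sigma)}$. Since $\mathcal E_m\subset H^m(\Omega_T)$ for bounded $T$, the energy estimate gives $\|\rho\|_{\mathcal E_{m+1}}\le C\varepsilon^2\|f\|^2_{H^{m+1}(\Sigma)}$. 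The same argument applied to \eqref{eqn:w} gives $\|w^{(j)}\|_{\mathcal E_{m+1}}\le C\|f\|^2_{H^{m+1}(\Sigma)}$, which confirms that $w^{(j)}$ is well defined.

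\textbf{Step 3: third-order bound.} The source for $\mathcal R^{(j)}_\varepsilon$ equals $-q_j\,\rho\,(u^{(j)}_{\varepsilon f}+\varepsilon v^{(j)})$. By the algebra property again, together with Steps 1 and 2, its $\mathcal E_m$-norm is at most $C\varepsilon^2\|f\|^2\cdot|\varepsilon|\|f\|=C|\varepsilon|^3\|f\|^3_{H^{m+1}(\Sigma)}$. The linear energy estimate with trace then yields \eqref{Decomposition_estimate}.

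The only delicate point is the functional-analytic bookkeeping. One must check that the linear solution operator gains one derivative, mapping sources in $\mathcal E_m$ (or $H^m(\Omega_T)$) to $\mathcal E_{m+1}$ with an $H^m(\Sigma)$ normal derivative. This needs compatibility conditions at $t=0$. The sources vanish to order $m$ at $t=0$ because all solutions have zero Cauchy data and $f\in\mathcal K_{m+1}$; we would verify this by differentiating the equations in $t$ at $t=0$. Once this is in place, the constants depend only on $\Omega$, $T$, $M$ and the uniform bound $R$ from Theorem~\ref{main_thm:welposedness}.
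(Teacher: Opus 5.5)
Your proposal is correct and follows the paper's proof essentially step for step. It uses the same definition of $\mathcal R^{(j)}_\varepsilon$, the same factorization $\bigl(u^{(j)}_{\varepsilon f}-\varepsilon v^{(j)}\bigr)\bigl(u^{(j)}_{\varepsilon f}+\varepsilon v^{(j)}\bigr)$ of the source, the same quadratic bound on $u^{(j)}_{\varepsilon f}-\varepsilon v^{(j)}$ from the linear energy estimate and the algebra property \eqref{eq:Banach_algebra}, and the same combination to reach the cubic bound; your remarks on compatibility at $t=0$ and on $\mathcal E_m\subset H^m(\Omega_T)$ are bookkeeping that the paper delegates to the cited well-posedness lemma.
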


\begin{proof}
 Define $\mathcal R^{(j)}_{\varepsilon}=u^{(j)}_{\varepsilon f}-\varepsilon v^{(j)}-\varepsilon^2w^{(j)}$. Then, using \eqref{eqn:v}, \eqref{eq:semilinear-epsilon}, and \eqref{eqn:w}, it satisfies
\begin{align}\label{eqn:R}
\begin{cases}
\mathcal{L}_{a_{j},b_{j}}\mathcal R^{(j)}_{\varepsilon}=-q_j\left[\big(u^{(j)}_{\varepsilon f}\big)^2-\varepsilon^2\big(v^{(j)}\big)^2\right], & \text{in }\Omega_T,\\
\mathcal R^{(j)}_{\varepsilon}=0, & \text{on }\Sigma,\\
\mathcal R^{(j)}_{\varepsilon}(0,x)=\partial_t\mathcal R^{(j)}_{\varepsilon}(0,x)=0, & \text{in }\Omega.
\end{cases}
\end{align}
From the well-posedness estimate \cite[Lemma~2.2]{bhardwaj2026reconstructionpotentialdampingcoefficients},
the factorization
$\bigl(u^{(j)}_{\varepsilon f}\bigr)^{2}-\varepsilon^{2}\bigl(v^{(j)}\bigr)^{2}
=\bigl(u^{(j)}_{\varepsilon f}-\varepsilon v^{(j)}\bigr)\bigl(u^{(j)}_{\varepsilon f}+\varepsilon v^{(j)}\bigr)$
and the Banach algebra property \eqref{eq:Banach_algebra}, we have the following estimate
\begin{equation}\label{eq:estimate-R-epsilon}
\begin{aligned}
\bigl\|\mathcal R^{(j)}_{\varepsilon}\bigr\|_{\mathcal E_{m+1}}
+\bigl\|\partial_\nu\mathcal R^{(j)}_{\varepsilon}\bigr\|_{H^{m}(\Sigma)}
&\leq C\Bigl\|q_j\bigl(u^{(j)}_{\varepsilon f}-\varepsilon v^{(j)}\bigr)
\bigl(u^{(j)}_{\varepsilon f}+\varepsilon v^{(j)}\bigr)\Bigr\|_{\mathcal E_{m}}\\[2pt]
&\leq C\|q_j\|_{H^{m}(\Omega)}
\bigl\|u^{(j)}_{\varepsilon f}-\varepsilon v^{(j)}\bigr\|_{\mathcal E_{m+1}}
\Bigl(\bigl\|u^{(j)}_{\varepsilon f}\bigr\|_{\mathcal E_{m+1}}
+|\varepsilon|\bigl\|v^{(j)}\bigr\|_{\mathcal E_{m+1}}\Bigr).
\end{aligned}
\end{equation}
By Theorem \ref{main_thm:welposedness}
and \cite[Lemma~2.2]{bhardwaj2026reconstructionpotentialdampingcoefficients},
we have $\|u^{(j)}_{\varepsilon f}\|_{\mathcal E_{m+1}}\leq
C|\varepsilon|\,\|f\|_{H^{m+1}(\Sigma)}$ and $\|v^{(j)}\|_{\mathcal
E_{m+1}}\leq C\|f\|_{H^{m+1}(\Sigma)}$. Substituting these into
\eqref{eq:estimate-R-epsilon}, we obtain
\begin{equation}\label{eq:estimate-R-epsilon-2}
\bigl\|\mathcal R^{(j)}_{\varepsilon}\bigr\|_{\mathcal E_{m+1}}
+\bigl\|\partial_\nu\mathcal R^{(j)}_{\varepsilon}\bigr\|_{H^{m}(\Sigma)}
\leq C\|q_j\|_{H^{m}(\Omega)}
\bigl\|u^{(j)}_{\varepsilon f}-\varepsilon v^{(j)}\bigr\|_{\mathcal E_{m+1}}\,
|\varepsilon|\,\|f\|_{H^{m+1}(\Sigma)} .
\end{equation}
To handle the term $\bigl\|u^{(j)}_{\varepsilon f}-\varepsilon v^{(j)}\bigr\|_{\mathcal E_{m+1}}$,
observe that $u^{(j)}_{\varepsilon f}-\varepsilon v^{(j)}$ solves
\begin{equation}\label{eqn:difference}
\begin{cases}
\mathcal L_{a_j,b_j}\bigl(u^{(j)}_{\varepsilon f}-\varepsilon v^{(j)}\bigr)
=-q_j(x)\bigl(u^{(j)}_{\varepsilon f}\bigr)^{2}, &\text{in }\Omega_T,\\[2pt]
u^{(j)}_{\varepsilon f}-\varepsilon v^{(j)}=0, &\text{on }\Sigma,\\[2pt]
\bigl(u^{(j)}_{\varepsilon f}-\varepsilon v^{(j)}\bigr)(0,x)
=\partial_t\bigl(u^{(j)}_{\varepsilon f}-\varepsilon v^{(j)}\bigr)(0,x)=0, &\text{in }\Omega .
\end{cases}
\end{equation}
Now again using the well-posedness estimate \cite[Lemma~2.2]{bhardwaj2026reconstructionpotentialdampingcoefficients}, we have
\begin{equation}\label{eq:z-linear-estimate}
\begin{aligned}
\bigl\|u^{(j)}_{\varepsilon f}-\varepsilon v^{(j)}\bigr\|_{\mathcal E_{m+1}}
&\leq C\Bigl\|q_j\bigl(u^{(j)}_{\varepsilon f}\bigr)^{2}\Bigr\|_{\mathcal E_{m}}\\[2pt]
&\leq C\|q_j\|_{H^{m}(\Omega)}\bigl\|u^{(j)}_{\varepsilon f}\bigr\|^{2}_{\mathcal E_{m+1}}\\[2pt]
&\leq C\|q_j\|_{H^{m}(\Omega)}|\varepsilon|^{2}\|f\|^{2}_{H^{m+1}(\Sigma)},
\end{aligned}
\end{equation}
where the last two steps follow from the Banach algebra property \eqref{eq:Banach_algebra}
and the well-posedness of $u^{(j)}_{\varepsilon f}$.
Combining \eqref{eq:estimate-R-epsilon-2} and \eqref{eq:z-linear-estimate}, and using
$\|q_j\|_{H^{m}(\Omega)}\leq M$ from \eqref{eq:admissible-coefficients}, we obtain the estimate
\eqref{Decomposition_estimate}, which completes the proof.
\end{proof}

 We next address the boundary term appearing on the right-hand
side of \eqref{eq:id1-GO}. The estimate needed for this term is established
in the following lemma.  
 \begin{lemma}\label{lem:linearized-DN-estimate}
Let $m>n+1$ and  $(a_j,b_j,q_j)\in\mathcal A(M)$ for $j=1,2$, and assume that 
$\eta<\delta^{3}$. Then there exists a constant $C>0$, depending only on $\Omega$, $T$, and $M$, such that
\begin{equation}\label{eq:lemma-linearized-DN}
\left\|
\partial_\nu\bigl(v^{(1)}-v^{(2)}\bigr)
\right\|_{L^2(\Sigma)}
\leq
C\eta^{2/3}
\|f\|_{H^{m+1}(\Sigma)}
\end{equation}
for every $f\in\mathcal{K}_{m+1}$, where $v^{(j)}$, $j=1,2$, are the solutions
of \eqref{eqn:v} with the common Dirichlet data $f$.
\end{lemma}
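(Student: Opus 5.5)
We will derive the estimate from the second-order decomposition of Lemma~\ref{lem:second-order-decomposition}, combined with a scaling argument in the small parameter $\varepsilon$. By homogeneity it suffices to treat $f\in\mathcal K_{m+1}$ with $\|f\|_{H^{m+1}(\Sigma)}=1$, since $v^{(j)}$ depends linearly on $f$. For $0<\varepsilon<\delta$ the data $\varepsilon f$ lies in $\mathcal D^{\delta}_{m+1}$, possibly after shrinking $\varepsilon$ slightly to keep the strict inequality. Then \eqref{eq:def-eta-main} gives
\[
\bigl\|\partial_\nu\bigl(u^{(1)}_{\varepsilon f}-u^{(2)}_{\varepsilon f}\bigr)\bigr\|_{L^2(\Sigma)}\le \eta .
\]

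Next we insert the decomposition \eqref{eq:decomposition} for $j=1,2$ and take normal derivatives. This yields
\[
\varepsilon\,\partial_\nu\bigl(v^{(1)}-v^{(2)}\bigr)
=\partial_\nu\bigl(u^{(1)}_{\varepsilon f}-u^{(2)}_{\varepsilon f}\bigr)
-\varepsilon^2\partial_\nu\bigl(w^{(1)}-w^{(2)}\bigr)
-\partial_\nu\bigl(\mathcal R^{(1)}_\varepsilon-\mathcal R^{(2)}_\varepsilon\bigr).
\]
To bound the middle term we apply the linear well-posedness estimate (\cite[Lemma~2.2]{bhardwaj2026reconstructionpotentialdampingcoefficients}) to \eqref{eqn:w}, together with the algebra property \eqref{eq:Banach_algebra}. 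This gives $\|\partial_\nu w^{(j)}\|_{H^m(\Sigma)}\le C\|q_j(v^{(j)})^2\|_{\mathcal E_m}\le C$. The remainder terms are controlled by \eqref{Decomposition_estimate}, and $L^2(\Sigma)$ is dominated by $H^m(\Sigma)$. Dividing by $\varepsilon$ we obtain
\[
\bigl\|\partial_\nu\bigl(v^{(1)}-v^{(2)}\bigr)\bigr\|_{L^2(\Sigma)}\le C\Bigl(\frac{\eta}{\varepsilon}+\varepsilon+\varepsilon^2\Bigr).
\]

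The difficulty is the exponent. Balancing $\eta/\varepsilon$ against the $O(\varepsilon)$ term only gives $\eta^{1/2}$. To reach $\eta^{2/3}$, the $\varepsilon^2$ contribution from $w$ must be removed. We do this with a symmetrization in $\varepsilon$: apply the same identity with $-\varepsilon$, which is admissible because $|\varepsilon|\|f\|\le\delta$ in Lemma~\ref{lem:second-order-decomposition}, and $-\varepsilon f$ has the same norm, so $\eta$ still controls the DN difference. Taking one half of the difference of the $\pm\varepsilon$ identities cancels the $\varepsilon^2 w$ terms exactly, since they are even in $\varepsilon$. What remains is
\[
\bigl\|\partial_\nu\bigl(v^{(1)}-v^{(2)}\bigr)\bigr\|_{L^2(\Sigma)}\le C\Bigl(\frac{\eta}{\varepsilon}+\varepsilon^{2}\Bigr),
\]
because the remainders contribute $C\varepsilon^3/\varepsilon$.

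Finally we choose $\varepsilon=\eta^{1/3}$. The hypothesis $\eta<\delta^3$ ensures exactly that $\varepsilon<\delta$, so the data stays admissible. This gives the bound $C\eta^{2/3}$. Rescaling back to general $f$ via linearity of $f\mapsto v^{(j)}$ produces the factor $\|f\|_{H^{m+1}(\Sigma)}$, and the case $f=0$ is trivial. The main point to check carefully is the symmetrization: the cancellation requires the remainder bounds of Lemma~\ref{lem:second-order-decomposition} to hold for negative $\varepsilon$ and the sup in \eqref{eq:def-eta-main} to include $-\varepsilon f$. Both hold, since $\mathcal D^\delta_{m+1}$ is symmetric.
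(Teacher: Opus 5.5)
Your argument is correct and is essentially the paper's proof: the $\pm\varepsilon$ symmetrization cancels the $\varepsilon^{2}w$ terms and leaves the bound $C\bigl(\eta/\varepsilon+\varepsilon^{2}\|f\|^{3}_{H^{m+1}(\Sigma)}\bigr)$. Your normalization $\|f\|_{H^{m+1}(\Sigma)}=1$ with $\varepsilon=\eta^{1/3}$ is equivalent to the paper's choice $\varepsilon=\eta^{1/3}/\|f\|_{H^{m+1}(\Sigma)}$; for $\eta=0$ that choice is not allowed, but letting $\varepsilon\to0$ in the same bound gives the claim, a point the paper also leaves implicit.
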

\begin{proof}
    Taking normal derivatives of equation \eqref{eq:decomposition}, we have
\begin{equation}\label{eq:normal-v-first}
\partial_\nu\bigl(v^{(1)}-v^{(2)}\bigr)
=
\frac{1}{\varepsilon}
\Big[
\partial_\nu\bigl(u_{\varepsilon f}^{(1)}-u_{\varepsilon f}^{(2)}\bigr)
-\varepsilon^2\partial_\nu\bigl(w^{(1)}-w^{(2)}\bigr)
-\partial_\nu\bigl(\mathcal R_\varepsilon^{(1)}
-\mathcal R_\varepsilon^{(2)}\bigr)
\Big]
\quad\text{on }\Sigma .
\end{equation}
Note that  $\varepsilon f\in\mathcal D^{\delta}_{m+1}$ if and only if
$-\varepsilon f\in\mathcal D^{\delta}_{m+1}$. Subtracting the two decompositions \eqref{eq:decomposition} we get $u_{\varepsilon f}^{(j)}-u_{-\varepsilon f}^{(j)}=2\varepsilon v^{(j)}+\mathcal R_\varepsilon^{(j)}-\mathcal R_{-\varepsilon}^{(j)}$. Hence, taking normal derivatives and subtracting the identities for $j=1$ and $j=2$,
\begin{equation}\label{eq:dn-odd}
\begin{split}
\partial_\nu\bigl(v^{(1)}-v^{(2)}\bigr)
&=
\frac{1}{2\varepsilon}
\Bigl[
\bigl(\Lambda_{a_1,b_1,q_1}-\Lambda_{a_2,b_2,q_2}\bigr)(\varepsilon f)
-
\bigl(\Lambda_{a_1,b_1,q_1}-\Lambda_{a_2,b_2,q_2}\bigr)(-\varepsilon f)\\&
\qquad\qquad-
\partial_\nu
\bigl(\mathcal R_\varepsilon^{(1)}-\mathcal R_\varepsilon^{(2)}\bigr)
+
\partial_\nu
\bigl(\mathcal R_{-\varepsilon}^{(1)}-\mathcal R_{-\varepsilon}^{(2)}\bigr)
\Bigr]
\quad\text{on }\Sigma .
\end{split}
\end{equation}
Taking the $L^2(\Sigma)$ norm in the above equation and using the triangle inequality in conjunction with \eqref{eq:def-eta-main}  and \eqref{Decomposition_estimate}, we obtain
\begin{equation}\label{eq:dn-eps}
\left\|
\partial_\nu\bigl(v^{(1)}-v^{(2)}\bigr)
\right\|_{L^2(\Sigma)}
\leq
C\left(
\frac{\eta}{\varepsilon}
+
\varepsilon^2
\|f\|_{H^{m+1}(\Sigma)}^3
\right).
\end{equation}
We now optimize the right-hand side with respect to
$\varepsilon$ by choosing $\varepsilon=\eta^{1/3}/\|f\|_{H^{m+1}(\Sigma)}$,
which is valid since $\eta<\delta^{3}$. This proves the estimate
\eqref{eq:lemma-linearized-DN}, completing the proof of the lemma.
\end{proof}

We are now in a position to derive the weighted estimate for the
difference of the damping coefficients. Indeed, combining the geometric
optics identity~\eqref{eq:id1-GO} with the estimates obtained above for the
remainder term, the lower-order contributions, and the boundary term, we are
left with the term containing $a^{(12)}$. This yields the estimate stated in the following
lemma.
\begin{lemma}\label{lem:weighted-a}
Let $m>n+1$ and $(a_{j},b_{j},q_{j})\in\mathcal{A}(M)$ for $j=1,2$, and assume $\eta< \delta^3$. Then for every $\tau\geq 1$,
$\omega\in\mathbb{S}^{n-1}$ and 
$\varphi\in C^{\infty}_{c}(\mathbb{R}^{n})$ with
$\operatorname{supp}(\varphi)\cap\overline{\Omega}=\emptyset$ and
$\left(\operatorname{supp}(\varphi)\pm T\omega\right)\cap\overline{\Omega}
=\emptyset$,  the estimate
\begin{equation}\label{eq:id1-GO_2}
\left|\int_{\Omega_T}
a^{(12)}(x)\,
\varphi^2(x+t\omega)
A_1^{+}(t,x)A^{-}(t,x)
\,\mathrm dx\,\mathrm dt\right|\leq C\left( \frac{1}{\tau}+ \eta^{2/3} \tau^{m+2} \right)\| \varphi\|^2_{H^{m+2}(\R^{n})}
\end{equation} 
holds, where $C=C(\Omega,T,M)>0$ is independent of $\tau$ and $\varphi$ .
\end{lemma}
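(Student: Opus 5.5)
We start from the geometric optics identity \eqref{eq:id1-GO}, which comes from Lemma~\ref{lem:identity1} with $v^{(1)}$ from Lemma~\ref{lemma: geom opt sol v_{1}} (for $(a_1,b_1)$) and $v_0$ from Lemma~\ref{lemma: geom opt sol v}. We isolate the leading term $\mathrm{i}\tau\int_{\Omega_T}a^{(12)}\varphi^2A_1^{+}A^{-}$ and divide by $\tau$. This gives
\begin{equation*}
\left|\int_{\Omega_T}a^{(12)}\varphi^2(x+t\omega)A_1^{+}A^{-}\,\mathrm dx\,\mathrm dt\right|
\leq \frac{1}{\tau}\Bigl(|\mathfrak X|+|\mathfrak E(\tau)|\Bigr)+\frac{1}{\tau}\left|\int_\Sigma \partial_\nu\bigl(v^{(1)}-v^{(2)}\bigr)v_0\,\mathrm dS_x\,\mathrm dt\right|.
\end{equation*}
Lemma~\ref{lem:remainder-estimates} bounds the first group by $C\tau^{-1}\|\varphi\|_{H^3}^2\le C\tau^{-1}\|\varphi\|^2_{H^{m+2}}$, since $m+2\ge 3$.

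For the boundary term we apply Cauchy--Schwarz on $\Sigma$ and then Lemma~\ref{lem:linearized-DN-estimate}, which is available because $\eta<\delta^3$. The Dirichlet datum is $f=v^{(1)}|_\Sigma$, and this gives $C\eta^{2/3}\|f\|_{H^{m+1}(\Sigma)}\|v_0\|_{L^2(\Sigma)}$. Here we must check that $f\in\mathcal K_{m+1}$. Since $\operatorname{supp}\varphi\cap\overline\Omega=\emptyset$, the function $v^{(1)}$ vanishes identically near $t=0$ in a neighbourhood of $\overline\Omega$, so all $t$-derivatives of $f$ vanish at $t=0$. It therefore remains to bound $\|f\|_{H^{m+1}(\Sigma)}$ and $\|v_0\|_{L^2(\Sigma)}$ in terms of $\tau$ and $\varphi$.

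Bounding these two trace norms is the main step, and we handle it by choosing the representation of the solutions carefully. On $\Sigma$ the solution $v^{(1)}$ equals $\varphi(x+t\omega)A_1^{+}e^{\mathrm{i}\tau(x\cdot\omega+t)}+R_1$. The cleanest route is to observe that, because $a_1,b_1$ are compactly supported in $\Omega$, near $\partial\Omega$ the amplitude satisfies the free transport equation. One can then take the boundary datum to be exactly the trace of the explicit GO ansatz, with $R_1$ vanishing on $\Sigma$, which is how the construction of \cite{Isakov1991AnIH} is set up (an IBVP with zero Dirichlet data for $R_1$).

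Differentiating $m+1$ times the function $\varphi(x+t\omega)A_1^{+}e^{\mathrm{i}\tau(\cdot)}$ gives at most $\tau^{m+1}$ times derivatives of $\varphi$ of order at most $m+1$, together with derivatives of $A_1^{+}$ controlled by $\|a_1\|_{H^K}$ via Sobolev embedding, since $K$ is large. Using the trace theorem (losing half a derivative) and $\tau\ge1$, this yields $\|f\|_{H^{m+1}(\Sigma)}\le C\tau^{m+1}\|\varphi\|_{H^{m+2}(\R^n)}$.

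Similarly, $v_0|_\Sigma$ is the trace of the explicit ansatz, with $R_0=0$ on $\Sigma$. Alternatively, the trace theorem applied with the $H^1$-type bounds on $R_0$ from Lemma~\ref{lemma: geom opt sol v} gives $\|v_0\|_{L^2(\Sigma)}\le C\tau\|\varphi\|_{H^{m+2}}$ at worst, and a better bound if $R_0|_\Sigma=0$.

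Multiplying the two bounds and dividing by $\tau$ gives at most $C\eta^{2/3}\tau^{m+1}\|\varphi\|^2_{H^{m+2}}$, or $C\eta^{2/3}\tau^{m+2}\|\varphi\|^2_{H^{m+2}}$ under the cruder trace bound for $v_0$. Either way this is within the claimed $C\eta^{2/3}\tau^{m+2}\|\varphi\|^2_{H^{m+2}}$. Adding the $\tau^{-1}$ contribution from the first group yields \eqref{eq:id1-GO_2}.
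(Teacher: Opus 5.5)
Your proposal is correct and is essentially the paper's proof. You divide \eqref{eq:id1-GO} by $\tau$, bound $(|\mathfrak X|+|\mathfrak E(\tau)|)/\tau$ via Lemma~\ref{lem:remainder-estimates} and $H^{m+2}\hookrightarrow H^3$, and control the boundary term by Cauchy--Schwarz, Lemma~\ref{lem:linearized-DN-estimate} and the trace bounds \eqref{eq:go-traces}. Your justification of those trace bounds goes beyond what the paper writes: you use $R_1|_\Sigma=R_0|_\Sigma=0$ from the construction, which even gives $\|f\|_{H^{m+1}(\Sigma)}\le C\tau^{m+1}\|\varphi\|_{H^{m+2}(\R^n)}$, and you also check $f\in\mathcal K_{m+1}$. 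Your fallback bound for $\|v_0\|_{L^2(\Sigma)}$ is not supported by Lemma~\ref{lemma: geom opt sol v}, which gives no control of $\nabla_x R_0$, but it is not needed.
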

\begin{proof}
To derive the desired estimate, we divide the identity~\eqref{eq:id1-GO} by
$\mathrm{i}\tau$ and use the notation introduced in~\eqref{eq:def-X}. Taking absolute values together with the triangle inequality, we obtain
\begin{align}\label{eq:id1-GO-1}
\left|
\int_{\Omega_T}
a^{(12)}(x)\,
\varphi^{2}(x+t\omega)
A_1^{+}A^{-}
\,\mathrm dx\,\mathrm dt
\right|
\leq
\frac{|\mathfrak X|+|\mathfrak E(\tau)|}{\tau}
+
\frac{1}{\tau}
\left|
\int_{\Sigma}
\partial_\nu\!\left(v^{(1)}-v^{(2)}\right)
v_0\,
\mathrm dS_x\,\mathrm dt
\right|.
\end{align}
We first bound the right-hand side of the preceding estimate. On that note, observe that from \eqref{eq:E-K}, we have
\begin{align}\label{eq:first-term-a-final}
\frac{|\mathfrak X|+|\mathfrak E(\tau)|}{\tau}
\leq
\frac{C}{\tau}
\|\varphi\|_{H^3(\mathbb R^n)}^2\leq \frac{C}{\tau}
\|\varphi\|_{H^{m+2}(\mathbb R^n)}^2. 
\end{align}
In the last inequality, we have used  $m+2\geq3$, so that $H^{m+2}(\R^n) \hookrightarrow H^{3}(\R^n)$ and hence
$\|\varphi\|_{H^3(\mathbb R^n)}
\leq
C\|\varphi\|_{H^{m+2}(\mathbb R^n)}$.
For the other term, we use the Cauchy-Schwarz inequality, Lemma 
\ref{lem:linearized-DN-estimate}, and the trace bounds 
\begin{equation}\label{eq:go-traces}
\norm{f}_{H^{m+1}(\Sigma)}\leq C\tau^{m+2}\norm{\varphi}_{H^{m+2}(\R^{n})},
\qquad
\norm{v_0}_{L^{2}(\Sigma)}\leq C\norm{\varphi}_{H^{m+2}(\R^{n})}.
\end{equation}
Combining these estimates, we have
\begin{align}
   \frac{1}{\tau}\left|\int_{\Sigma}\partial_\nu\!\left(v^{(1)}-v^{(2)}\right)\,v_{0}\, \mathrm dS_x\,\mathrm dt\right|
   &\le\frac{1}{\tau}\left\|{\partial_\nu\!\left(v^{(1)}-v^{(2)}\right)}\right\|_{L^{2}(\Sigma)}\norm{v_{0}}_{L^{2}(\Sigma)}\nonumber\\
   &\le\frac{C}{\tau}\,\eta^{2/3}\,
     \norm{f}_{H^{m+1}(\Sigma)}\norm{v_{0}}_{L^{2}(\Sigma)}\nonumber\\
   &\le C\,\eta^{2/3}\,\tau^{m+2}\norm{\varphi}^{2}_{H^{m+2}(\R^{n})}.\label{eq:boundary-a-final}
\end{align}
Finally, inserting the estimate of the right hand side \eqref{eq:first-term-a-final} and
\eqref{eq:boundary-a-final} into \eqref{eq:id1-GO-1} and using $\tau\geq1$, we arrive at
\begin{align*}
\left|
\int_{\Omega_T}
a^{(12)}(x)\,
\varphi^{2}(x+t\omega)
A_1^{+}(t,x)A^{-}(t,x)
\,\mathrm dx\,\mathrm dt
\right|
\leq
C\left(
\frac{1}{\tau}
+
\eta^{2/3}\tau^{m+2}
\right)
\|\varphi\|_{H^{m+2}(\mathbb R^n)}^2.
\end{align*}
This proves \eqref{eq:id1-GO_2} and completes the proof of the lemma.
\end{proof}
The preceding weighted estimate allows us to control the line integrals of
$a^{(12)}$. After eliminating the attenuation factor and applying a suitable
localization argument, we obtain a stability estimate for the X-ray
transform of $a^{(12)}$, stated in the following lemma.
\begin{lemma}\label{lemma:::id1-GO_2}
Let $m>n+1$ and $(a_{j},b_{j},q_{j})\in\mathcal{A}(M)$ for $j=1,2$, and assume that $\eta<  \delta^3$, $T>\mathrm{diam}(\Omega)$. Then, we have the following estimate
\begin{equation}\label{eq:::id1-GO_2}
\left|\int_{\mathbb{R}}
a^{(12)}(z + s \omega)\,
\,\mathrm ds\right|\leq C\,\eta^{\mu_0},\quad \mu_0:=\frac{2}{3(m+3)(2m+5)}  \quad \mbox{for all}\ (z,\omega)\in T\mathbb{S}^{n-1},
\end{equation} 
where $ T\mathbb{S}^{n-1} := \{\, (z,\omega)\in \mathbb{R}^n\times\mathbb{S}^{n-1}  \,:\, z\cdot\omega=0\}$ is the tangent bundle of the unit sphere $\mathbb{S}^{n-1}$ in $\mathbb{R}^n$ and $C=C(\Omega,T,M)>0$.
\end{lemma}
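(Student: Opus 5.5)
We start from Lemma~\ref{lem:weighted-a} and first make the weight $A_1^{+}A^{-}$ explicit. Substituting $y=x+t\omega$ in the $x$-integral gives
\[
A_1^{+}(t,y-t\omega)A^{-}(t,y-t\omega)=\exp\Bigl(-\tfrac12\int_0^t a^{(12)}(y-t\omega+s\omega)\,\mathrm ds\Bigr)=\exp\Bigl(-\tfrac12\int_{-t}^{0}a^{(12)}(y+r\omega)\,\mathrm dr\Bigr).
\]
Since $\varphi$ is supported away from $\overline\Omega$ and the coefficients vanish outside $\Omega$, the map $t\mapsto -\tfrac12\int_{-t}^0 a^{(12)}(y+r\omega)\,\mathrm dr$ is absolutely continuous, and
\[
\partial_t\exp\Bigl(-\tfrac12\int_{-t}^{0}a^{(12)}(y+r\omega)\,\mathrm dr\Bigr)=-\tfrac12 a^{(12)}(y-t\omega)\exp\Bigl(-\tfrac12\int_{-t}^{0}a^{(12)}(y+r\omega)\,\mathrm dr\Bigr).
\]
The $t$-integrand is therefore an exact derivative. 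The support conditions and $T>\operatorname{diam}(\Omega)$ ensure that for $y\in\operatorname{supp}\varphi$ the segment $\{y-t\omega:0\le t\le T\}$ crosses $\Omega$ completely, so the $t$-integral over $(0,T)$ equals the full line. The left side of \eqref{eq:id1-GO_2} thus becomes
\[
2\int_{\mathbb R^n}\varphi^2(y)\Bigl(1-\exp\Bigl(-\tfrac12\int_{\mathbb R}a^{(12)}(y+r\omega)\,\mathrm dr\Bigr)\Bigr)\,\mathrm dy .
\]
Writing $y=z+s\omega$ with $z\perp\omega$, this is an average over $z$, weighted by $\psi(z):=\int\varphi^2(z+s\omega)\,\mathrm ds$, of $G(z,\omega):=1-e^{-\frac12 I a^{(12)}(z,\omega)}$, where $I$ is the X-ray transform.

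Next we localize. Fix $(z_0,\omega)$. We choose $\varphi(y)=h^{-(n-1)/2}\chi((y'-z_0)/h)\rho(y\cdot\omega)$, where $y'$ is the projection onto $\omega^\perp$, $\chi\in C_c^\infty(\mathbb R^{n-1})$, and $\rho$ is a fixed bump in the $\omega$-direction placed at distance greater than $\operatorname{diam}\Omega$ behind $\Omega$ (for instance around $-R_0$). We must check that $\operatorname{supp}\varphi$ and $\operatorname{supp}\varphi\pm T\omega$ avoid $\overline\Omega$. This needs $T$ and the placement of $\rho$ to be arranged jointly, and is the one geometric point to verify: we want $\operatorname{supp}\varphi-T\omega$ to lie beyond $\Omega$, and $\operatorname{supp}\varphi+T\omega$ farther out still. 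With this choice $\|\varphi\|_{H^{m+2}}^2\le Ch^{-2(m+2)}$. Since $a^{(12)}\in H^K$ with $K$ large, $z\mapsto G(z,\omega)$ is Lipschitz with a constant depending on $M$. Hence the weighted average differs from $cG(z_0,\omega)$ by $O(h)$, and we obtain
\[
|G(z_0,\omega)|\le C\Bigl(h+\bigl(\tau^{-1}+\eta^{2/3}\tau^{m+2}\bigr)h^{-2(m+2)}\Bigr).
\]
Since $|Ia^{(12)}|\le C(M)$, we have $|Ia^{(12)}|\le C|G|$ by the mean value theorem applied to the exponential.

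Finally we optimize. Balancing $\tau^{-1}=\eta^{2/3}\tau^{m+2}$ gives $\tau=\eta^{-2/(3(m+3))}$, so the bracket is $C\eta^{2/(3(m+3))}$. Balancing $h=\eta^{2/(3(m+3))}h^{-(2m+4)}$ then gives $h=\eta^{2/(3(m+3)(2m+5))}=\eta^{\mu_0}$, which is exactly the stated exponent. The constraint $\tau\ge1$ holds for $\eta<1$, and the case $\eta\ge\delta^3$ is trivial since $|Ia^{(12)}|\le C$. The main obstacle is the localization step: making the support conditions compatible with $T>\operatorname{diam}\Omega$, and getting the Lipschitz control of $G$ uniformly in the parameters.
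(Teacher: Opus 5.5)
Your argument is correct in substance but localizes differently from the paper. The first step, where the $t$-integrand is an exact derivative of the attenuation weight, is the same in both. The paper then uses an isotropic bump $\chi_h$, independent of $\omega$, centred at points $y_0$ of the shell $\Omega_r=\{r/2<\operatorname{dist}(x,\Omega)<r\}$. This only controls the truncated integral $\int_0^T a^{(12)}(y_0-\rho\omega)\,d\rho=\int_0^\infty a^{(12)}(y_0-\rho\omega)\,d\rho$. The full X-ray transform is then obtained by adding the bounds for $\omega$ and $-\omega$ and moving the base point along the line. Your anisotropic tube $h^{-(n-1)/2}\chi((y'-z_0)/h)\rho(y\cdot\omega)$ gives $Xa^{(12)}(z_0,\omega)$ directly at every $(z_0,\omega)\in T\mathbb{S}^{n-1}$. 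It has the same cost $\|\varphi\|^2_{H^{m+2}}\le Ch^{-2(m+2)}$ and needs no smallness of $h$ beyond $h\le 1$, so there are no thresholds $\tau_0,\eta_0$. It also gives the same exponent: optimizing $\tau$ first and then $h$ reaches the same joint optimum as the paper's choices $h=\tau^{-1/(2m+5)}$, $\tau=\eta^{-2/(3(m+3))}$. What the paper's version buys is an $\omega$-independent test function and pointwise half-ray information at exterior base points, the format it reuses for $q^{(12)}$ in Lemma~\ref{lemma:::id1-GO_3}.

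The step you left open must be done correctly, and your tentative placement is wrong. The support conditions of Lemma~\ref{lem:weighted-a} do not by themselves force the segment $\{y-t\omega:0\le t\le T\}$ to contain the whole chord of the line with $\Omega$. For example, $\operatorname{supp}\varphi$ may lie on the side that the segment moves away from, and for nonconvex $\Omega$ the chord may lie on both sides of $y$. This is exactly why the paper only obtains $\int_0^T$ at general exterior points. Your choice fails on two counts. First, $y\cdot\omega\approx -R_0$ is on the wrong side, since $y-t\omega$ moves in the $-\omega$ direction. Second, a slab at distance greater than $\operatorname{diam}\Omega$ is too far: a segment of length $T$, assumed only to exceed $\operatorname{diam}\Omega$, cannot then reach across.

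Set $s_+(\omega):=\max_{\overline\Omega}x\cdot\omega$ and $s_-(\omega):=\min_{\overline\Omega}x\cdot\omega$. You need $\operatorname{supp}\rho\subset(s_+(\omega),\,s_-(\omega)+T)$, an interval of length at least $T-\operatorname{diam}\Omega>0$. For instance, take $\rho(s)=\rho_0\bigl(s-s_+(\omega)-\tfrac{T-\operatorname{diam}\Omega}{2}\bigr)$ with $\operatorname{supp}\rho_0\subset\bigl(-\tfrac{T-\operatorname{diam}\Omega}{4},\tfrac{T-\operatorname{diam}\Omega}{4}\bigr)$, which keeps all norms uniform in $\omega$. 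Then $\operatorname{supp}\varphi$ and $\operatorname{supp}\varphi+T\omega$ lie in $\{y\cdot\omega>s_+\}$, and $\operatorname{supp}\varphi-T\omega$ lies in $\{y\cdot\omega<s_-\}$. Moreover, every $y\in\operatorname{supp}\varphi$ satisfies $\int_0^T a^{(12)}(y-t\omega)\,dt=Xa^{(12)}(y',\omega)$, so your full-line identity holds for this $\varphi$. With this correction the rest goes through: the Lipschitz dependence of $1-e^{-\frac12 Xa^{(12)}(z,\omega)}$ on $z$, the lower bound $|e^s-1|\ge e^{-M_0}|s|$, and the optimization.
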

\begin{proof}
We begin by rewriting the estimate~\eqref{eq:id1-GO_2} by introducing the change of variables
$y=x+t\omega$.
Since $a_j\in C_c^\infty(\Omega)$, we extend them by zero
outside $\Omega$ to all of $\R^n$. Applying Fubini's theorem, we  have
\begin{equation}\label{eq:  weighted-a-after-change}
\Bigg|
\int_0^T\int_{\mathbb R^n}
a^{(12)}(y-t\omega)
A_1^+(t,y-t\omega)
A^-(t,y-t\omega)\varphi^{2}(y)
\,\mathrm dy\,\mathrm dt
\Bigg|
\leq
C
\left(
\frac{1}{\tau}
+
\eta^{2/3}\tau^{m+2}
\right)
\|\varphi\|_{H^{m+2}(\mathbb R^n)}^2 .
\end{equation}
Substituting the values of $A_1^+$ and $A^-$ from the \eqref{eq:def-Aplus} and \eqref{eq:def-Aminus}, and using the change of variables
$\rho=t-s$ in the exponent together with the fundamental theorem of calculus, we get
\begin{equation}\label{eq:exponential-averaged}
\left|
\int_{\mathbb R^n}
\left[
\exp\Bigl(
-\frac12
\int_0^T
a^{(12)}(y-\rho\omega)\,\mathrm d\rho
\Bigr)
-1
\right]
\varphi^2(y)\,\mathrm dy
\right|
\leq
C
\left(
\frac1{\tau}
+
\eta^{2/3}\tau^{m+2}
\right)
\|\varphi\|_{H^{m+2}(\mathbb R^n)}^2 .
\end{equation}
We next localize the preceding estimate. Fix
\begin{equation}\label{eq:choice-of-r}
0<r<\min\left\{1,\ \frac{T-\operatorname{diam}(\Omega)}{3}\right\},
\end{equation}
and define
\begin{equation}\label{dom:Omega_epsilon}
\Omega_r
:=
\left\{
x\in\mathbb R^n:
\frac{r}{2}<\operatorname{dist}(x,\Omega)<r
\right\}.
\end{equation}
Let $\chi\in C_c^\infty(B_1(0))$, $0\leq\chi\leq1$ and $\| \chi\|_{L^{2}(\R^n)}=1$.
For $y_0\in\Omega_r$ and $0<h<\frac{r}{4}$, define
\begin{equation}\label{Phi_delta}
\chi_h(y)
:=
h^{-n/2}
\chi\left(\frac{y-y_0}{h}\right).
\end{equation}
Then
$\operatorname{supp}\chi_h\subset B_h(y_0)$ and $\|\chi_h\|_{H^{m+2}(\mathbb R^n)} \leq C h^{-(m+2)}$.
We first check that $\chi_h$ satisfies the support condition in Lemma~\ref{lem:weighted-a}.
Let $y\in B_h(y_0)$ and $w\in\overline\Omega$. Since $y_0\in\Omega_r$ gives
$\operatorname{dist}(y_0,\overline\Omega)>\frac r2$, and $|y-y_0|<h<\frac r4$, we have
\begin{equation}\label{eq:supp-cond-1}
|y-w|\ \geq\ |y_0-w|-|y-y_0|\ >\ \frac r2-\frac r4\ =\ \frac r4\ >\ 0,
\end{equation}
so that $\operatorname{supp}\chi_h\cap\overline\Omega=\emptyset$. For the second
condition, note that $\operatorname{dist}(y_0,\overline\Omega)<r$ implies
$|y-w|\leq h+r+\operatorname{diam}(\Omega)<2r+\operatorname{diam}(\Omega)$, and hence,
by the choice of $r$ in \eqref{eq:choice-of-r},
\begin{equation}\label{eq:supp-cond-2}
|y\pm T\omega-w|\ \geq\ T-|y-w|\ >\ T-2r-\operatorname{diam}(\Omega)
\ >\ \frac{T-\operatorname{diam}(\Omega)}{3}\ >\ 0 .
\end{equation}
Therefore $\bigl(\operatorname{supp}\chi_h\pm T\omega\bigr)\cap\overline\Omega=\emptyset$,
and $\chi_h$ satisfies both support assumptions of Lemma~\ref{lem:weighted-a}.
Choosing $\varphi=\chi_h$ in \eqref{eq:exponential-averaged} and using the previous estimate, we obtain
\begin{align}\label{estimate:a_1-a_2}
\left|
\int_{\mathbb R^n}
\left[
\exp\left(
-\frac12
\int_0^T
a^{(12)}(y-\rho\omega)\,\mathrm d\rho
\right)
-1
\right]
\chi_h^2(y)\,\mathrm dy
\right|\leq
C h^{-2(m+2)}
\left(
\frac1{\tau}
+
\eta^{2/3}\tau^{m+2}
\right).
\end{align}
On the other hand, we observe that
\begin{equation}\label{eq:pointwise-split}
\begin{aligned}
\left|e^{-\frac{1}{2}\int_{0}^{T}a^{(12)}(y_{0}-\rho\omega)
\,\mathrm{d}\rho}-1\right|
&\le\int_{\mathbb{R}^{n}}\chi_{h}^2(y)
\left|e^{-\frac{1}{2}\int_{0}^{T}a^{(12)}(y_{0}-\rho\omega)
\,\mathrm{d}\rho}
-e^{-\frac{1}{2}\int_{0}^{T}a^{(12)}(y-\rho\omega)
\,\mathrm{d}\rho}\right|\,\mathrm{d}y\\
&\qquad+\left|\int_{\mathbb{R}^{n}}\chi^2_{h}(y)
\left(e^{-\frac{1}{2}\int_{0}^{T}a^{(12)}(y-\rho\omega)
\,\mathrm{d}\rho}-1\right)\,\mathrm{d}y\right| ,
\end{aligned}
\end{equation}
where $\int_{\R^n}\chi_h^2(y)\,\mathrm dy=1$ has been used. Since $\|a^{(12)}\|_{L^{\infty}(\Omega)}\le CM$, both exponents
satisfy $\bigl|\frac12\int_0^T a^{(12)}(\cdot-\rho\omega)\,\mathrm d\rho\bigr|\le MT$, so they belong to the bounded interval $[-MT,MT]$, on which the exponential function is Lipschitz. Hence, the first integrand is bounded by
\begin{equation}\label{eq:first-localization-error}
C \left|\int_{0}^{T}a^{(12)}(y_{0}-\rho\omega)\,\mathrm{d}\rho- \int_{0}^{T}a^{(12)}(y-\rho\omega)\,\mathrm{d}\rho  \right|
\leq C T\left\|\nabla a^{(12)}\right\|_{L^{\infty}(\mathbb{R}^{n})}\,|y_0-y|
\le C\,|y_0-y|\le Ch ,
\end{equation}
where the last two steps follow from $\|\nabla a^{(12)}\|_{L^\infty(\R^{n})}\le C\|a^{(12)}\|_{H^m(\R^{n})}\le CM$ by Sobolev embedding ($m>n+1>\frac n2+1$) together with $|y-y_0|\le h$ on $\supp\chi_h$. Substituting  \eqref{estimate:a_1-a_2} and \eqref{eq:first-localization-error} into \eqref{eq:pointwise-split}, we get
\begin{equation}\label{eq:a-exp-diff}
\left|e^{-\frac{1}{2}\int_{0}^{T}a^{(12)}(y_{0}-\rho\omega)\,\mathrm{d}\rho}-1\right|
\le C\left(h+h^{-2(m+2)}\Bigl(\frac{1}{\tau}+\eta^{2/3}\tau^{m+2}\Bigr)\right).
\end{equation}
Next, we show that the left-hand side of \eqref{eq:a-exp-diff} is bounded below by a multiple of $\bigl|\frac{1}{2}\int_{0}^{T}a^{(12)}(y_{0}-\rho\omega)\,\mathrm{d}\rho\bigr|$. Indeed, by the fundamental theorem of calculus, we have
\begin{equation*}
  \left|\exp(s)-1\right|=\left|\int_{0}^{s}\exp(t)\,\mathrm{d}t\right|\geq \exp(-M_{0})|s|\qquad \text{ for }  |s|\leq M_{0}.
\end{equation*}
Since $\|a_{j}\|_{L^{\infty}(\Omega)}\le M$, we chose
$M_{0}:=MT$ and $|s|:=\bigl|\frac{1}{2}\int_{0}^{T}a^{(12)}(y_{0}-\rho\omega)
\,\mathrm{d}\rho\bigr|$. Combining this with \eqref{eq:a-exp-diff} gives
\begin{equation}\label{eq:a-ray-tau-h}
\left|\int_{0}^{T}a^{(12)}(y_{0}-\rho\omega)\,\mathrm{d}\rho\right|
\le 2e^{M_{0}}\left|e^{-\frac{1}{2}\int_{0}^{T}a^{(12)}(y_{0}-\rho\omega)\,\mathrm{d}\rho}-1\right|
\le C\left(h+h^{-2(m+2)}\Bigl(\frac{1}{\tau}+\eta^{2/3}\tau^{m+2}\Bigr)\right).
\end{equation}
This estimate holds for every $y_0\in\Omega_r$,
$\omega\in\mathbb S^{n-1}$, $0<h<\frac{r}{4}$, and  
$\tau\geq 1$. 

We now choose the parameter $h$ in terms of $\tau$ by balancing the
first two terms on the right-hand side of
\eqref{eq:a-ray-tau-h}. Therefore, we choose
$h=\tau^{-\frac{1}{2m+5}}$ and this choice is valid only when $h<\frac{r}{4}$, that is, when $\tau>\left(\frac{4}{r}\right)^{2m+5}$. We therefore set $\tau_{0}:=\max\{1,\,\left(\frac{4}{r}\right)^{2m+5}\}$ and
restrict to $\tau\geq\tau_{0}$.
For this choice,  $h^{-2(m+2)}\tau^{-1}=\tau^{-\frac{1}{2m+5}}$ and $h^{-2(m+2)}\eta^{2/3}\tau^{m+2}=\eta^{2/3}\tau^{m+2+\frac{2(m+2)}{2m+5}}$, and hence, setting
$\alpha_0:=\frac{1}{2m+5}\in(0,1)$,
we deduce from \eqref{eq:a-ray-tau-h} that
\begin{equation}\label{eq:;ray-transform-a-tau}
\left|
\int_0^T
a^{(12)}(y_0-\rho\omega)\,\mathrm d\rho
\right|
\leq
C
\left(
\frac{1}{\tau^{\alpha_0}}
+
\eta^{2/3}
\tau^{m+2+\frac{2(m+2)}{2m+5}}
\right)\qquad \text{ for }\tau\ge\tau_0 .
\end{equation}
Hence, for every $y_0\in\Omega_r$ and
$\omega\in\mathbb S^{n-1}$, the estimate
\eqref{eq:;ray-transform-a-tau} holds for all $\tau\geq \tau_{0}$. Next, we optimize the above estimate in $\tau$ by substituting  $\tau^{-\alpha_0}=\eta^{2/3}\tau^{\beta}$, where $\beta:=m+2+\frac{2(m+2)}{2m+5}=\frac{(m+2)(2m+7)}{2m+5}$. This gives $\tau=\eta^{-\frac{2}{3(m+3)}}$, and this choice is valid as soon as $\eta$ is small enough, i.e.,
$0<\eta\leq\eta_{0}:=\tau_{0}^{-\frac{3(m+3)}{2}}$. With these choices, we arrive at 
\begin{equation}\label{eq:ray-transform-a-eta}
\left|\int_{0}^{T}a^{(12)}(y_{0}-\rho\omega)\,\mathrm d\rho\right|
\leq C\,\eta^{\frac{2}{3(m+3)(2m+5)}},
\end{equation}
for $y_{0}\in\Omega_{r}$, $\omega\in\mathbb S^{n-1}$ and $\eta\leq \eta_{0}$.

Finally, if $\eta\geq\eta_{\star}:=\min\{\delta^3,\eta_0\}$, then
\begin{equation}\label{eq:ray-transform-a-eta_2}
\left|\int_{0}^{T}a^{(12)}(y_{0}-\rho\omega)\,\mathrm d\rho\right|
\leq 2MT
=\frac{2MT}{\eta_{\star}^{\mu_{0}}}\,\eta_{\star}^{\mu_{0}}
\leq\frac{2MT}{\eta_{\star}^{\mu_{0}}}\,\eta^{\mu_{0}},
\end{equation}
where $\mu_{0}:=\dfrac{2}{3(m+3)(2m+5)}$. Now replacing $C$ by $\max\{C,\,2MT\eta_{\star}^{-\mu_{0}}\}$, we conclude that
\eqref{eq:ray-transform-a-eta} holds for every $\eta>0$, every $y_{0}\in\Omega_{r}$
and every $\omega\in\mathbb S^{n-1}$, with a constant $C$ depending only on
$\Omega$, $T$  and $M$. 

Next, we show that the above estimate in fact holds for every $y_{0}\in\mathbb R^{n}$, with the integral over all of $\mathbb R$, with $C$ replaced by $2C$, and the same exponent $\mu_{0}$. Indeed, let $y_{0}\in\Omega_{r}$ and $\omega\in\mathbb S^{n-1}$. If $\rho\geq T$,
then for every $w\in\overline\Omega$ we have
$|y_{0}-\rho\omega-w|\geq\rho-|y_{0}-w|\geq T-\big(r+\operatorname{diam}(\Omega)\big)>2r$, which means that
the point $y_{0}-\rho\omega$ lies outside $\overline\Omega$. Since
$\operatorname{supp}a^{(12)}\subset\Omega$, we conclude that
$a^{(12)}(y_{0}-\rho\omega)=0$ for every $\rho\geq T$. Therefore $\int_{0}^{T}a^{(12)}(y_{0}-\rho\omega)\,\mathrm d\rho
=\int_{0}^{\infty}a^{(12)}(y_{0}-\rho\omega)\,\mathrm d\rho$.  Replacing $\omega$ by $-\omega$ and adding the two half-lines, we get
\begin{align}\label{eq:ray-transform-a-tau}
   \Bigl|\int_{\mathbb{R}}a^{(12)}(y_0-\rho\omega)\,\mathrm d\rho\Bigr|\le 2C\,\eta^{\frac{2}{3(m+3)(2m+5)}}\quad \text{for}\ y_{0}\in \Omega_{r} \ \text{and} \ \omega\in \mathbb{S}^{n-1}. 
\end{align}
Next, we start with the observation that the left hand side of the estimate \eqref{eq:ray-transform-a-tau} remains unchanged if the base point is moved along its own line, since 
\begin{align}
    \int_{\mathbb{R}}a^{(12)}(z+s\omega)\mathrm ds=\int_{\mathbb{R}}a^{(12)}(z+t\omega+s\omega)\mathrm ds=\int_{\mathbb{R}}a^{(12)}(z+(s+t)\omega)\mathrm ds.
\end{align}
 Let $y_{0}\in \R^{n}\setminus\overline{\Omega}$; there are two possibilities. If the line $y_{0}+t\omega$ does not intersect $\Omega_{r}$, then it does not intersect $\overline{\Omega}$. Since $a\in C_{c}^{\infty}(\Omega)$, the integrand vanishes along this line, so the left hand side of the integral equation is zero, therefore the estimate \eqref{eq:ray-transform-a-tau} holds. Suppose now that the line passing through $y_{0}$ intersects $\Omega_{r}$. Then there exists $t\in\mathbb{R}$ such that $y_{0}+t\omega\in\Omega_{r}$. Therefore, by the preceding observation, we conclude that \eqref{eq:ray-transform-a-tau} is valid for $y_{0}\in\mathbb{R}^{n}\setminus\overline{\Omega}$. It remains to consider $y_{0}\in\overline{\Omega}$. In this case, there exists $t\in\mathbb{R}$ such that $y_{0}+t\omega\in \Omega_{r}$. Once again, the earlier observation implies that \eqref{eq:ray-transform-a-tau} holds for such $y_{0}$. Therefore, the estimate \eqref{eq:ray-transform-a-tau} holds for every $y_{0}\in \mathbb{R}^{n}$. This completes the proof.
\end{proof}
We are now ready to prove Proposition~\ref{prop-damping stability}. First, we define the  X-ray transform of $a^{(12)}$ by
\begin{equation}\label{eq:def-Xray}
Xa^{(12)}(z,\omega):=\int_{\mathbb R}a^{(12)}(z+s\omega)\,\mathrm ds,  \quad \mbox{for all}\ (z,\omega)\in T\mathbb{S}^{n-1}.
\end{equation}
Lemma \ref{lemma:::id1-GO_2} allows us to obtain an estimate for the X-ray transform of $a^{(12)}$, from which the desired $L^\infty(\Omega)$ stability follows by an interpolation argument.

\begin{proof}[Proof of Proposition~\ref{prop-damping stability}]
From Lemma~\ref{lemma:::id1-GO_2}, we have the estimate \eqref{eq:::id1-GO_2} which is $\left|Xa^{(12)}(z,\omega)  \right|  \leq C\,\eta^{\mu_0}$ for $(z,\omega)\in T\mathbb{S}^{n-1}$.
Since $\Omega$ is a bounded domain, therefore $\Omega\subset B_{R_\Omega}(0)$ for some $R_\Omega>0$. Since $z$ is
perpendicular to $\omega$, therefore $|z|$ is the distance from the origin to the line $\{z+s\omega\}$ for $s\in \mathbb{R}$. So when
$|z|>R$ the line stays outside $B_{R}(0)$, hence outside $\Omega$, and $Xa^{(12)}(z,\omega)=0$. Therefore, we conclude
\[
  \|Xa^{(12)}\|^{2}_{L^{2}(T\mathbb{S}^{n-1})}
  =\int_{\mathbb{S}^{n-1}}\int_{\{z\in\omega^{\perp}:\,|z|\le R\}}\bigl|Xa^{(12)}(z,\omega)\bigr|^{2}\,\mathrm dz\,\mathrm d\omega
  \le C\,\eta^{2\mu_{0}},
\]
with a constant depending only on $\Omega$ and $R$. Taking square roots,
$\|Xa^{(12)}\|_{L^{2}(T\mathbb{S}^{n-1})}\le C\,\eta^{\mu_{0}}$. From the stability estimate for the ray transform, see \cite{Natterer1986}, we obtain
$\|a^{(12)}\|_{H^{-1/2}(\mathbb{R}^{n})}\le C\,\|Xa^{(12)}\|_{L^{2}(T\mathbb{S}^{n-1})}$ which gives  $\|a^{(12)}\|_{H^{-1/2}(\mathbb{R}^{n})}\le C\,\eta^{\mu_0}$.

Next, we evaluate $\|a^{(12)}\|_{H^{s}(\R^n)}$ for $\frac{n+1}{2}\leq s\leq n+1$ using the interpolation estimate of $\|a^{(12)}\|_{H^{-1/2}(\R^n)}$ and $\|a^{(12)}\|_{H^{m}(\R^n)}$. We denote
$\langle\xi\rangle:=(1+|\xi|^{2})^{1/2}$ and $\|u\|^{2}_{H^{s}(\mathbb{R}^{n})}
=\int_{\mathbb{R}^{n}}\langle\xi\rangle^{2s}|\widehat{u}(\xi)|^{2}\,\mathrm d\xi$. Therefore, we start with
\begin{align*}
  \|a^{(12)}\|^{2}_{H^{s}(\mathbb{R}^{n})}
  &=\int_{\mathbb{R}^{n}}\langle\xi\rangle^{2s}\bigl|\widehat{a^{(12)}}(\xi)\bigr|^{2}\,\mathrm d\xi\\
  &=\int_{\mathbb{R}^{n}}\Bigl(\langle\xi\rangle^{-1}\bigl|\widehat{a^{(12)}}(\xi)\bigr|^{2}\Bigr)^{\frac{2(m-s)}{2m+1}}
     \Bigl(\langle\xi\rangle^{2m}\bigl|\widehat{a^{(12)}}(\xi)\bigr|^{2}\Bigr)^{\frac{2s+1}{2m+1}}\,\mathrm d\xi\\
  &\le\Bigl(\int_{\mathbb{R}^{n}}\langle\xi\rangle^{-1}\bigl|\widehat{a^{(12)}}(\xi)\bigr|^{2}\,\mathrm d\xi\Bigr)^{\frac{2(m-s)}{2m+1}}
      \Bigl(\int_{\mathbb{R}^{n}}\langle\xi\rangle^{2m}\bigl|\widehat{a^{(12)}}(\xi)\bigr|^{2}\,\mathrm d\xi\Bigr)^{\frac{2s+1}{2m+1}}\\
  &=\|a^{(12)}\|^{\frac{4(m-s)}{2m+1}}_{H^{-1/2}(\mathbb{R}^{n})}\;\|a^{(12)}\|^{\frac{4s+2}{2m+1}}_{H^{m}(\mathbb{R}^{n})} ,
\end{align*}
where the third line follows from H\"older's inequality. Taking square roots on both sides gives
\begin{align}
  \|a^{(12)}\|_{H^{s}(\mathbb{R}^{n})}
  &\leq
  \|a^{(12)}\|^{\frac{2(m-s)}{2m+1}}_{H^{-1/2}(\mathbb{R}^{n})} \; \|a^{(12)}\|^{\frac{2s+1}{2m+1}}_{H^{m}(\mathbb{R}^{n})}\\
  &\leq \bigl(C\eta^{\mu_{0}}\bigr)^{\frac{2(m-s)}{2m+1}}(2M)^{\frac{2s+1}{2m+1}}\\
  &\leq C \eta^{\frac{4(m-s)}{3(2m+1)(2m+5)(m+3)}}, 
\end{align}
for $\frac{n+1}{2}\le s\le n+1$. Now by the Sobolev embedding theorem, we have
\begin{align}\label{estimate;final estimate for a}
  \|a^{(12)}\|_{L^{\infty}(\Omega)}\leq \|a^{(12)}\|_{H^{\frac{n+1}{2}}(\mathbb{R}^{n})}
  \leq C\,\eta^{\frac{2(2m-n-1)}{3(2m+1)(2m+5)(m+3)}}.
\end{align}
This completes the proof of the proposition.
\end{proof}

\section{Stability of the linear potential}\label{sec:linear-potential}
We now investigate the stability of the linear potential. The aim is to
estimate $b_1-b_2$ in terms of  $\eta$ described in \eqref{eq:def-eta-main}. The argument combines the geometric optics
solutions and the corresponding integral identity with the stability estimate
for the damping coefficient obtained in the previous section. After
controlling the terms involving $a_1-a_2$, we derive a stability estimate
for the weighted line integrals of $b_1-b_2$, which lead to the desired
stability result. The precise statement is given by the following proposition.

\begin{proposition}[Stability for the linear potential]\label{Prop:b-stability}
Let $m$ be an integer with $m>n+1$, $M>0$ and let $(a_j,b_j,q_j)\in\mathcal A(M)$, $j=1,2$. Suppose, in addition, that
$T>\operatorname{diam}(\Omega).$
Then there exist constant $C>0$, depending only on
$\Omega$, $T$ and $M$, such that, for all  $\eta\geq0$,
\begin{equation}\label{eq:b-Linfty}
\|b_{1}- b_{2}\|_{L^{\infty}(\Omega)}
  \leq C\,\eta^{\mu_b},
\end{equation}
where the H\"older exponent $\mu_b\in(0,1)$ is given by
\begin{align}
     \mu_b:=\frac{(2m-n-1)^{2}}{3(m+3)(2m+1)^{2}(2m+5)^{2}}.
\end{align}
\end{proposition}
The proof of Proposition~\ref{Prop:b-stability} begins with an
estimate for the weighted line integrals of $b^{(12)} := b_{1}- b_{2}$. The contribution of
the damping coefficient is controlled using the stability result established
in the previous section. This leads to the following auxiliary lemma, which
serves as the starting point for the stability analysis of the linear
potential.
\begin{lemma}\label{lem:b-basic}
    Let $n\geq2$ and $\Omega\subset\mathbb{R}^{n}$ be open, connected, and bounded with a smooth boundary. Suppose $T>\operatorname{diam}(\Omega)$,  $M>0$,  $m$ be integers with $m>n+1$ and $(a_{j},b_{j},q_{j})\in\mathcal{A}(M)$ for $j=1,2$,  and assume $\eta<  \delta^3$. Then for every $\tau\geq 1$,
$\omega\in\mathbb{S}^{n-1}$, and 
$\varphi\in C^{\infty}_{c}(\mathbb{R}^{n})$ with
$\operatorname{supp}(\varphi)\cap\overline{\Omega}=\emptyset$ and
$\left(\operatorname{supp}(\varphi)\pm T\omega\right)\cap\overline{\Omega}
=\emptyset$, the following estimate holds
\begin{equation}\label{eq:b-basic}
  \left|\int_{\Omega_T}b^{(12)}(x)\,\varphi^{2}(x+t\omega)A^{+}_{1}(t,x)A^{-}(t,x)\,\mathrm dx\,\mathrm dt\right|
  \le C\Bigl(\frac1\tau+\|a^{(12)}\|_{L^{\infty}(\Omega)}\,\tau+\eta^{2/3}\tau^{m+2}\Bigr)\|\varphi\|^{2}_{H^{m+2}(\mathbb{R}^{n})},
\end{equation}
for $C>0$ depends only on $\Omega$, $T$, and $M$.
\end{lemma}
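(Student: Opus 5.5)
We return to the geometric optics identity \eqref{eq:id1-GO}, obtained by inserting the solution $v^{(1)}$ of Lemma~\ref{lemma: geom opt sol v_{1}} (with $(a,b)=(a_1,b_1)$) and the backward solution $v_0$ of Lemma~\ref{lemma: geom opt sol v} into the first order identity \eqref{eq:id1} of Lemma~\ref{lem:identity1}. This time we do not divide by $\mathrm{i}\tau$. Instead we isolate the $b^{(12)}$ term on the left and move everything else to the right. This gives
\begin{align*}
\left|\int_{\Omega_T}b^{(12)}\varphi^2(x+t\omega)A_1^+A^-\,\mathrm dx\,\mathrm dt\right|
\le{}& \tau\left|\int_{\Omega_T}a^{(12)}\varphi^2(x+t\omega)A_1^+A^-\,\mathrm dx\,\mathrm dt\right|
+\left|\int_{\Omega_T}a^{(12)}\varphi A^-\partial_t(\varphi A_1^+)\,\mathrm dx\,\mathrm dt\right|\\
&+|\mathfrak E(\tau)|+\left|\int_\Sigma\partial_\nu\bigl(v^{(1)}-v^{(2)}\bigr)v_0\,\mathrm dS_x\,\mathrm dt\right|.
\end{align*}

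The key steps, in order, are as follows.

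\textbf{Damping terms.} The first term on the right is bounded by $C\tau\|a^{(12)}\|_{L^\infty(\Omega)}\|\varphi\|_{L^2}^2$, using the amplitude bounds \eqref{eq:go-amplitudes}. The second term is bounded by $C\|a^{(12)}\|_{L^\infty(\Omega)}\|\varphi\|_{H^3}^2$ via \eqref{eq:dt-amplitude}. Since $\tau\ge1$, both are absorbed into $C\|a^{(12)}\|_{L^\infty}\tau\|\varphi\|^2_{H^{m+2}}$.

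\textbf{Boundary term.} By Lemma~\ref{lem:linearized-DN-estimate} and the trace bounds \eqref{eq:go-traces}, this term is at most $C\eta^{2/3}\tau^{m+2}\|\varphi\|^2_{H^{m+2}}$. This mirrors \eqref{eq:boundary-a-final}, except that there is no factor $1/\tau$ here; this only changes $\tau^{m+1}$ into $\tau^{m+2}$, which is still dominated by the stated bound.

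\textbf{Remainder $\mathfrak E(\tau)$ — the main obstacle.} Lemma~\ref{lem:remainder-estimates} only gives $|\mathfrak E(\tau)|\le C\|\varphi\|^2_{H^3}$, which is $O(1)$ and not $O(1/\tau)$. The claimed bound requires a sharper split.

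In the proof of Lemma~\ref{lem:remainder-estimates}, the terms $\mathfrak E_1,\mathfrak E_4,\mathfrak E_5,\mathfrak E_6,\mathfrak E_7$ are already $O(\tau^{-1})\|\varphi\|^2_{H^3}$.

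The remaining terms $\mathfrak E_2$ and $\mathfrak E_3$ carry the factor $a^{(12)}$. Estimating them exactly as in that proof, but keeping $\|a^{(12)}\|_{L^\infty}$ explicit instead of bounding it by $CM$, gives
\[
|\mathfrak E_2|+|\mathfrak E_3|\le C\|a^{(12)}\|_{L^\infty(\Omega)}\|\varphi\|_{H^3}^2\le C\|a^{(12)}\|_{L^\infty(\Omega)}\,\tau\,\|\varphi\|_{H^3}^2 .
\]
Here $\mathfrak E_2$ uses $\tau\|R_0\|_{L^2}\le C\|\varphi\|_{H^3}$, and $\mathfrak E_3$ uses $\|\partial_tR_1\|_{L^2}\le C\|\varphi\|_{H^3}$.

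So every term of $\mathfrak E(\tau)$ is either $O(\tau^{-1})$ or carries a factor $\|a^{(12)}\|_{L^\infty}$. The point to verify carefully is that no $O(1)$ contribution survives without such a factor. If one did (for instance through the $b^{(12)}$ terms $\mathfrak E_5$ or $\mathfrak E_6$), I would go back to the remainder bounds \eqref{eq:go-remainders} and confirm that $\|R_0\|_{L^2}+\|R_1\|_{L^2}\le C\tau^{-1}\|\varphi\|_{H^3}$, so that these terms are indeed $O(\tau^{-1})$.

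\textbf{Conclusion.} Summing the three contributions and using $\|\varphi\|_{H^3}\le C\|\varphi\|_{H^{m+2}}$ yields \eqref{eq:b-basic}, with a constant $C$ depending only on $\Omega$, $T$ and $M$.
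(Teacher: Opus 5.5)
Your proposal is correct and follows the paper's proof essentially step for step: you isolate the $b^{(12)}$ term in \eqref{eq:id1-GO}, bound the two damping integrals by $C\tau\|a^{(12)}\|_{L^\infty(\Omega)}\|\varphi\|^2_{H^{m+2}(\mathbb{R}^n)}$ and the boundary term via Lemma~\ref{lem:linearized-DN-estimate} and \eqref{eq:go-traces}, then redo the estimates for $\mathfrak E_1,\dots,\mathfrak E_7$ while keeping $\|a^{(12)}\|_{L^\infty(\Omega)}$ explicit, which yields exactly the paper's bound $|\mathfrak E(\tau)|\le C(\tau^{-1}+\|a^{(12)}\|_{L^\infty(\Omega)})\|\varphi\|^2_{H^{m+2}(\mathbb{R}^n)}$. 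Your hedge about $\mathfrak E_5,\mathfrak E_6$ is unnecessary, since \eqref{eq:go-remainders} already gives the $\tau^{-1}$ decay of $R_0$ and $R_1$ in $L^2(\Omega_T)$.
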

\begin{proof}
Starting from the integral identity~\eqref{eq:id1-GO}, we isolate the
term containing $b^{(12)}$. This gives
\begin{equation}\label{eq:id1-GO-b}
\begin{aligned}
\int_{\Omega_T}
b^{(12)}(x)
\varphi^2(x+t\omega)
A_1^{+}A^{-}
\,\mathrm dx\,\mathrm dt
={}&
\int_{\Sigma}
\partial_\nu\!\left(v^{(1)}-v^{(2)}\right)
v_0
\,\mathrm dS_x\,\mathrm dt
-\mathrm i\tau
\int_{\Omega_T}
a^{(12)}(x)
\varphi^2(x+t\omega)
A_1^{+}A^{-}
\,\mathrm dx\,\mathrm dt\\
&-\int_{\Omega_T}
a^{(12)}(x)
\varphi(x+t\omega)A^{-}
\partial_t\!\left(\varphi A_1^{+}\right)
\,\mathrm dx\,\mathrm dt
-\mathfrak E(\tau) .
\end{aligned}
\end{equation}
Taking absolute values and applying the triangle inequality, the left-hand side of \ref{eq:b-basic} is at most $I_1+I_2+I_3+\sum_{k=1}^{7}|\mathfrak E_k(\tau)|$, where $I_1,I_2,I_3$ denotes the absolute values of the three integrals on the right of \eqref{eq:id1-GO-b}, in that order, and the remaining contribution is $\mathfrak{E}(\tau)$.

We first estimate the boundary term. Using  Cauchy-Schwarz inequality together with Lemma~\ref{lem:linearized-DN-estimate} and the trace bounds \eqref{eq:go-traces} gives
\begin{equation}\label{eq:b-I1}
I_1
\leq
\left\|
\partial_\nu(v^{(1)}-v^{(2)})
\right\|_{L^2(\Sigma)}
\|v_0\|_{L^2(\Sigma)}
\leq
C\eta^{2/3}
\|f\|_{H^{m+1}(\Sigma)}
\|v_0\|_{L^2(\Sigma)}\le C\eta^{2/3}\tau^{m+2}\|\varphi\|^2_{H^{m+2}(\R^n)}
\end{equation}
Next, using the bounds \eqref{eq:admissible-coefficients} and \eqref{eq:go-amplitudes}  gives us
\begin{equation}\label{eq:b-I2}
I_2
\leq
\tau
\|a^{(12)}\|_{L^\infty(\Omega)}
\|A_1^{+}\|_{L^\infty(\Omega_T)}
\|A^{-}\|_{L^\infty(\Omega_T)}
\int_{\Omega_T}|\varphi(x+t\omega)|^2\,\mathrm dx\,\mathrm dt
\leq
C\tau
\|a^{(12)}\|_{L^\infty(\Omega)}
\|\varphi\|_{H^{m+2}(\mathbb R^n)}^2 .
\end{equation}
For $I_3$ we use \eqref{eq:dt-amplitude} and the Cauchy-Schwarz inequality, which gives
\begin{align}\label{eq:b-I3}
\begin{split}
    I_3&\le\|a^{(12)}\|_{L^\infty(\Omega)}\|A^-\|_{L^\infty(\Omega_{T})}\|\varphi(\cdot+t\omega)\|_{L^2(\Omega_T)}\bigl(\|A_1^+\|_{L^\infty(\Omega_{T})}\|\omega\cdot\nabla\varphi(\cdot+t\omega)\|_{L^2(\Omega_T)}\\
&\qquad\qquad\qquad+\|\partial_tA_1^+\|_{L^\infty(\Omega_T)}\|\varphi(\cdot+t\omega)\|_{L^2(\Omega_T)}\bigr)\\
    &\le C\|a^{(12)}\|_{L^\infty(\Omega)}\|\varphi\|^2_{H^{m+2}(\R^n)}.
\end{split}
\end{align}
Finally, the seven terms $\mathfrak E_k(\tau)$, $1\leq k\leq 7$, were estimated in Section~\ref{sec:identities} (for more details see the proof of Lemma~\ref{lem:remainder-estimates}), this time we will keep the factors $\|a^{(12)}\|_{L^\infty(\Omega)}$ and $\|b^{(12)}\|_{L^\infty(\Omega)}$ instead of bounding them by $2M$, the same computation gives
\begin{equation}\label{eq:b-E-total}
|\mathfrak E(\tau)|
\leq
C
\left(
\frac{1}{\tau}
+
\|a^{(12)}\|_{L^\infty(\Omega)}
\right)
\|\varphi\|_{H^{m+2}(\mathbb R^n)}^2
\le
C
\left(
\frac{1}{\tau}
+
\|a^{(12)}\|_{L^\infty(\Omega)}\tau
\right)
\|\varphi\|_{H^{m+2}(\mathbb R^n)}^2.
\end{equation}
 Combining \eqref{eq:b-I1}, \eqref{eq:b-I2}, \eqref{eq:b-I3}, and \eqref{eq:b-E-total} proves the estimate \eqref{eq:b-basic}, which completes the proof of the lemma.
\end{proof}

We now proceed to the proof of the main result of this section. The argument combines the weighted estimate obtained above with the
stability estimate for the damping coefficient. This leads to a bound for the
X-ray transform of $b^{(12)}$, from which the desired $L^\infty$-stability
follows by interpolation.
\begin{proof}[Proof of Proposition \ref{Prop:b-stability}]
We next remove the factor of $A_1^{+}A^-$ from the left hand side of estimate \eqref{eq:b-basic}. To do so, we start with a change of variables $y=x+t\omega$ and extend $b_j$ by zero to $\R^n$, as we have done in \eqref{eq:  weighted-a-after-change}. Combining this with the triangle inequality, Lemma \eqref{lem:b-basic} and Proposition \ref{prop-damping stability}, we obtain 
\begin{align}\label{estimate; split}
&
\left|
\int_0^T\int_{\mathbb R^n}
b^{(12)}(y-t\omega)
\varphi^2(y)
\,\mathrm dy\,\mathrm dt
\right|
\nonumber\\
&\leq
\left|
\int_0^T\int_{\mathbb R^n}
b^{(12)}(y-t\omega)
\left[
1-
A_1^+(t,y-t\omega)A^-(t,y-t\omega)
\right]
\varphi^2(y)
\,\mathrm dy\,\mathrm dt
\right|
\nonumber\\
&\quad+
C
\left(
\frac{1}{\tau}
+
\eta^{2/3}\tau^{m+2}
+
\tau\eta^{\mu_a}
\right)
\|\varphi\|_{H^{m+2}(\mathbb R^n)}^2.
\end{align}
On the other hand, using the fact that the exponential function is Lipschitz on bounded domain, we have
\begin{equation}\label{eq:b-weight-removal}
\left|
1-
A_1^+(t,y-t\omega)A^-(t,y-t\omega)
\right|
\leq
C
\left|
\int_0^t
a^{(12)}(y-\rho\omega)\,\mathrm d\rho
\right|
\leq
CT\|a^{(12)}\|_{L^\infty(\Omega)}\le C\eta^{\mu_a} .
\end{equation}
Substituting \eqref{eq:b-weight-removal} into \eqref{estimate; split} together with the bound $\|b^{(12)}\|_{L^\infty(\Omega)}\le 2M$, we get
\begin{equation}\label{eq:b-noweight}
\left|
\int_0^T\!\!\int_{\mathbb R^n}
b^{(12)}(y-t\omega)
\varphi^2(y)
\,\mathrm dy\,\mathrm dt
\right|
\leq
C
\left(
\frac{1}{\tau}
+
\eta^{\mu_a}\tau
+
\eta^{2/3}\tau^{m+2}
\right)
\|\varphi\|_{H^{m+2}(\mathbb R^n)}^2.
\end{equation}
Proceed exactly as we did in section \ref{sec:identities} and choosing $\varphi=\chi_{h}$, we arrive at
\begin{equation}\label{eq:b-Hminus-h-tau}
\|b^{(12)}\|_{H^{-1/2}(\mathbb R^n)} \leq
C
\left[
h
+
\frac{h^{-2(m+2)}}{\tau}
+
h^{-2(m+2)}\eta^{\mu_a}\tau
+
h^{-2(m+2)}\eta^{2/3}\tau^{m+2}
\right].
\end{equation}
This estimate holds for  $0<h<r/4$, and  
$\tau\geq \tau_{0}$.

We optimize \eqref{eq:b-Hminus-h-tau} in $h$ and $\tau$. Balancing the first two terms as before, we set $h=\tau^{-\alpha_0}$ to obtain
\begin{equation}\label{eq:b-factorized}
\|b^{(12)}\|_{H^{-1/2}(\mathbb R^n)}
\leq
C\tau^{-\alpha_0}
\left(
1
+
\eta^{\mu_a}\tau^2
+
\eta^{2/3}\tau^{m+3}
\right).
\end{equation}
We now choose $\tau$ so as to balance the first and the second terms in the bracket by choosing $\tau=\eta^{-\mu_a/2}$
which satisfies $\tau\ge\tau_0$ for all $0<\eta\le\eta_1$ with $\eta_1>0$ small enough. It remains to check that the third term stays bounded under this choice. Indeed $\eta^{2/3}\tau^{m+3}=\eta^{\frac23-\frac{\mu_a(m+3)}{2}}$ and
\begin{equation}\label{eq:b-side-condition}
\frac{\mu_a(m+3)}{2}=\frac{2m-n-1}{3(2m+1)(2m+5)}<\frac{2m+1}{3(2m+1)(2m+5)}=\frac{1}{3(2m+5)}<\frac13<\frac23,
\end{equation}
so that $\eta^{2/3}\tau^{m+3}\le1$ for $0<\eta\le1$. Therefore, \eqref{eq:b-factorized} gives $\|b^{(12)}\|_{H^{-1/2}(\R^n)}\le C\tau^{-\alpha_0}=C\eta^{\alpha_0\mu_a/2}$, that is,
\begin{equation}\label{eq:b-Hminus}
\|b^{(12)}\|_{H^{-1/2}(\mathbb R^n)}
\leq
C
\eta^{\nu_b},
\qquad
\nu_b:=\frac{\mu_a}{2(2m+5)}=\frac{2m-n-1}{3(m+3)(2m+1)(2m+5)^{2}} .
\end{equation}
For $\eta>\eta_1$ the estimate holds trivially after adjusting the constant $C$, as in Section~\ref{sec:identities}. Combining \eqref{eq:b-Hminus} with the a priori bound $\|b^{(12)}\|_{H^m(\R^n)}\le2M$, the interpolation inequality gives
\[
\|b^{(12)}\|_{H^{\frac{n+1}{2}}(\R^n)}\le\|b^{(12)}\|^{\frac{2m-n-1}{2m+1}}_{H^{-1/2}(\R^n)}\|b^{(12)}\|^{\frac{n+2}{2m+1}}_{H^{m}(\R^n)}\leq C\eta^{\mu_b} .
\]
Combining the above estimate with the Sobolev embedding $\|b^{(12)}\|_{L^{\infty}(\R^n)}\leq \|b^{(12)}\|_{H^{\frac{n+1}{2}}(\R^n)}$ completes the proof of the Proposition~\ref{Prop:b-stability}.
\end{proof}

\section{Stability for the nonlinear potential.}\label{sec:nonlinear-potential}
We now turn to the stability analysis of the nonlinear potential $q$. In
contrast to the damping coefficient $a$ and the linear potential $b$, which
already appear in the first-order linearized equation, the coefficient $q$
 appears only in the second-order linearization of the nonlinear
DN map. The estimates obtained in the preceding sections
for $a^{(12)}$ and $b^{(12)}$ will therefore play an important role in
controlling the additional terms arising in the present analysis. Throughout this section, we fix $N=m+1$ appearing in Lemmas  \ref{Asymptotic solutions for IBVP} and \ref{Asymptotic solutions}. We also set $\gamma:=\frac{\mu_b}{2}$ and $\kappa:= 9m+12-\frac{n}{2}$.
\medskip

The main result of this section is given by the following proposition.
\begin{proposition}[Stability for the nonlinear potential]\label{prop:q-stability}
  Let $m$ be an integer with $m>n+1$, $M>0$ and  $(a_j,b_j,q_j)\in\mathcal A(M)$ for $j=1,2$. Suppose, in addition, that
$T>\operatorname{diam}(\Omega).$
Then there exist constant $C>0$, depending only on
$\Omega$, $T$ and $M$, such that, for all  $\eta\geq0$,
\begin{equation}\label{eq:q-Linfty}
\|q_{1}- q_{2}\|_{L^{\infty}(\Omega)}\leq C\,\eta^{\mu_q},
\end{equation}
where the H\"older exponent $\mu_q\in(0,1)$ is given by 
\[
\mu_q
:=
\frac{\mu_{b}}
{2(18m+26-n)}.
\]
\end{proposition}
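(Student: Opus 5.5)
The plan is to run the second-order linearization and plug in the asymptotic solutions of Lemmas~\ref{Asymptotic solutions for IBVP} and \ref{Asymptotic solutions} with $N=m+1$. The result is a stability estimate for an attenuated ray transform of $q^{(12)}:=q_1-q_2$. We then localize exactly as in Lemma~\ref{lemma:::id1-GO_2}, strip off the attenuation using Propositions~\ref{prop-damping stability} and \ref{Prop:b-stability}, and close with the X-ray stability and interpolation argument used for $a$.

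\textbf{Step 1: second-order identity.} Write $w^{(j)}$ for the solution of \eqref{eqn:w}, and set $w^{(12)}=w^{(1)}-w^{(2)}$. It solves
\[
\mathcal L_{a_2,b_2}w^{(12)}=-q^{(12)}(v^{(1)})^2-q_2\bigl((v^{(1)})^2-(v^{(2)})^2\bigr)-a^{(12)}\partial_t w^{(1)}-b^{(12)}w^{(1)},
\]
with zero Cauchy and Dirichlet data. Multiplying by the backward solution $v_0$ and integrating by parts as in Lemma~\ref{lem:identity1} gives
\[
\int_{\Omega_T}q^{(12)}(v^{(1)})^2v_0 = -\int_\Sigma \partial_\nu w^{(12)}\,v_0 +\text{(terms involving $v^{(1)}-v^{(2)}$, $a^{(12)}$, $b^{(12)}$)}.
\]
The boundary term is handled like Lemma~\ref{lem:linearized-DN-estimate}. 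Using the symmetric combination $u_{\varepsilon f}+u_{-\varepsilon f}-2u_0$ and Lemma~\ref{lem:second-order-decomposition}, one gets
\[
\|\partial_\nu w^{(12)}\|_{L^2(\Sigma)}\le C(\eta\varepsilon^{-2}+\varepsilon\|f\|^3).
\]
Choosing $\varepsilon$ optimally yields $C\eta^{1/3}\|f\|^2_{H^{m+1}}$.

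The interior error terms are bounded by energy estimates. The difference $v^{(1)}-v^{(2)}$ solves a linear problem with source $-a^{(12)}\partial_tv^{(1)}-b^{(12)}v^{(1)}$. Its $\mathcal E_{m+1}$ norm is therefore $\lesssim(\|a^{(12)}\|_{W^{m,\infty}}+\|b^{(12)}\|_{W^{m,\infty}})\|f\|$, and by interpolation with the $H^K$ bound this is $\lesssim\eta^{\gamma}\tau^{\text{power}}$, where $\gamma=\mu_b/2$. The terms $a^{(12)}\partial_t w^{(1)}$ and $b^{(12)}w^{(1)}$ are bounded in the same way.

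\textbf{Step 2: insert the asymptotic solutions.} Take $v^{(1)}$ with phase $e^{i\tau(t+x\cdot\omega)}$ and $v_0$ with phase $e^{-2i\tau(t+x\cdot\omega)}$. The phases cancel in $(v^{(1)})^2v_0$. The leading term is then
\[
\int_{\Omega_T}q^{(12)}(x)\varphi^3(x+t\omega)\exp\Bigl(-\int_0^t a_1-\tfrac12\int_0^t a_2\Bigr)\,dx\,dt,
\]
where the integrals are along $x+s\omega$. The lower-order amplitudes $m_k,\widetilde m_k$ contribute $O(\tau^{-1})\|\varphi\|^3_{H^{m+2+2N}}$. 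The remainders contribute $O(\tau^{m-N})=O(\tau^{-1})$ in $W^{1,\infty}$, which is why $N=m+1$ is used. The data have $\|f\|_{H^{m+1}}\lesssim\tau^{m+1}\|\varphi\|$ and $\|v_0\|_{L^2(\Sigma)}\lesssim\|\varphi\|$. Altogether,
\[
|\text{attenuated transform}|\le C\bigl(\tau^{-1}+\eta^{\gamma}\tau^{p}+\eta^{1/3}\tau^{2m+2}\bigr)\|\varphi\|^3_{H^{3m+4}}.
\]
The required regularity $3m+4$ plus Sobolev embedding is what forces $K>\tfrac n2+3m+4$.

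\textbf{Step 3: localize and conclude.} Put $\varphi=\chi_h^{2/3}$, or a suitably normalized cube root, so that $\int\varphi^3=1$ with $\|\varphi\|_{H^{3m+4}}^3\lesssim h^{-\kappa'}$; this is where $\kappa=9m+12-\tfrac n2$ enters. A Lipschitz localization as in \eqref{eq:pointwise-split} then bounds the pointwise attenuated ray integral by
\[
C\bigl(h+h^{-\kappa}(\tau^{-1}+\eta^\gamma\tau^p+\eta^{1/3}\tau^{2m+2})\bigr).
\]
Replace the weight by the unattenuated one $e^{-\frac32\int a_2}$; the error is $O(\eta^{\mu_a})$ by Proposition~\ref{prop-damping stability}. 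Removing a known attenuation along a line is not a simple division, because the attenuation depends on $t$. The first thing to try is the transformation $\partial_t$ along the ray, inverting the attenuated ray transform with attenuation $\tfrac32 a_2$. A simpler alternative is to use that $a_2$ has compact support, so the attenuation is constant outside $\Omega$, and control it on small-$T$ pieces.

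The main obstacle is exactly this stable inversion of the attenuated transform together with the bookkeeping of $\tau$-powers. I would first try to reduce to the plain X-ray transform through the weight-removal trick: write $q^{(12)}e^{-\frac32\int_0^t a_2}$ as a function along the line and apply the Natterer $H^{-1/2}$ estimate, with the attenuation absorbed using its a priori $C^1$ bounds. Finally, balance $h=\tau^{-1/(\kappa+1)}$ and $\tau=\eta^{-\gamma/(\dots)}$ to obtain $\eta^{\gamma/(18m+26-n)}$, and interpolate to $L^\infty$ as in the proof of Proposition~\ref{prop-damping stability}.
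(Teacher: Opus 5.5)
The genuine gap is the final step, which you rightly call the main obstacle but do not resolve: getting from bounds on attenuated line integrals to a bound on $q^{(12)}$.

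Your Steps~1--2 and the $L^3$-normalized localization (with the factor $h^{-\kappa}$, $\kappa=9m+12-\frac n2$) follow the paper, apart from a sign slip. In fact $m_0^2\widetilde m_0=\varphi^3(A_1^+)^2A_2^-=\varphi^3\exp\bigl(-\int_0^ta_1+\frac12\int_0^ta_2\bigr)$. Since $(A_1^+)^2A_2^-=A_1^+\cdot(A_1^+A_2^-)$ and $|1-A_1^+A_2^-|\le C\eta^{\mu_a}$ by Proposition~\ref{prop-damping stability}, the effective weight is $A_1^+$. So the attenuation is $\frac12a_1$, not $\frac32a_2$.

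Your proposed fixes for the final step do not work:
\begin{itemize}
\item The weight-removal trick fails because the attenuating factor depends on the line and the position along it. So $q^{(12)}$ times the weight is a function of $(x,\omega)$, not of $x$ alone. Natterer's $H^{-1/2}$ estimate, which concerns $Xf$ for $f=f(x)$, does not apply.
\item A priori $C^1$ bounds cannot absorb the weight perturbatively. The coefficient $a_1$ is only bounded by $M$, not small, so the weight is not close to $1$.
\item Splitting lines into short pieces is not available. The measurements only give integrals over whole chords through $\Omega$ from exterior base points.
\end{itemize}

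The paper closes the argument with the idea you mention only in passing, differentiating along the ray. Define the half-ray transform $\Psi(y,\omega)=\int_0^\infty q^{(12)}(y-t\omega)\exp\bigl(-\frac12\int_0^ta_1(y-\rho\omega)\,d\rho\bigr)\,dt$. Differentiating in the direction $\omega$ and integrating by parts in $t$ gives the transport identity $q^{(12)}=\omega\cdot\nabla_y\Psi+\frac12a_1\Psi$. The Landau--Kolmogorov inequality $\|\nabla\Psi\|_{L^\infty}\le C\|\Psi\|_{L^\infty}^{1/2}\|\Psi\|_{W^{2,\infty}}^{1/2}$, with the a priori $W^{2,\infty}$ bound, then gives $\|q^{(12)}\|_{L^\infty(\Omega)}\le C\eta^{\nu_1/2}=C\eta^{\mu_q}$, where $\nu_1=\mu_b/(18m+26-n)$. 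This halving is exactly where $\mu_q$ comes from. No X-ray inversion or Sobolev interpolation is used for $q$.

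Be aware, however, that this step needs $\|\Psi(\cdot,\omega)\|_{L^\infty(\mathbb R^n)}\le C\eta^{\nu_1}$ \emph{including} base points $y\in\Omega$. The localization only reaches exterior base points. Unlike the full-line transform of $a^{(12)}$ in Lemma~\ref{lemma:::id1-GO_2}, $\Psi(y,\omega)$ is not invariant under sliding $y$ along its line once the slid segment meets $\supp q^{(12)}\cup\supp a_1$. So the paper's appeal to the earlier extension argument does not by itself cover $y\in\Omega$. That is precisely the attenuated-inversion issue you singled out. A complete proof along either route needs a real argument there, for instance a stability estimate for the attenuated X-ray transform with attenuation $\frac12a_1$, uniform over the admissible class.
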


We begin by deriving an integral identity involving the difference
$q^{(12)} := q_{1}- q_{2}$. The precise statement is given by the following lemma.
\begin{lemma}
    Let $(a_j,b_j,q_j)\in\mathcal{A}(M)$ and $v^{(j)}$ solve \eqref{eqn:v} with the same Dirichlet data $f\in\mathcal{K}_{m+1}$, assume $w^{(j)}$ solve \eqref{eqn:w} for $j=1,2$, and $v_{0}$ solve \eqref{eq;:backward}. Then the following identity holds
 \begin{align}\label{eq:q-identity}
 \begin{split}
        \int_{\Omega_{T}}q^{(12)}\big[v^{(1)}\big]^{2}v_{0}\,\mathrm dx\,\mathrm dt
  &=\int_{\Sigma}\big(\partial_{\nu}w^{(1)}-\partial_{\nu}w^{(2)}\big)v_{0}\,\mathrm dS_{x}\,\mathrm dt
  -\int_{\Omega_{T}}a^{(12)}\partial_{t}w^{(1)}v_{0}\,\mathrm dx\,\mathrm dt\\&
  -\int_{\Omega_{T}}b^{(12)}w^{(1)}v_{0}\,\mathrm dx\,\mathrm dt
  -\int_{\Omega_{T}}q_{2}\big(v^{(1)}-v^{(2)}\big)\big(v^{(1)}+v^{(2)}\big)v_{0}\,\mathrm dx\,\mathrm dt,
  \end{split}
    \end{align}
\end{lemma}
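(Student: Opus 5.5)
The plan mirrors the proof of Lemma~\ref{lem:identity1}: write down the equation for the difference $w^{(12)}:=w^{(1)}-w^{(2)}$, multiply by the backward solution $v_0$, and integrate by parts so that only a boundary term survives on one side.

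First, subtract the equations \eqref{eqn:w} for $j=1,2$. We rewrite the operator on $w^{(1)}$ as $\mathcal L_{a_1,b_1}=\mathcal L_{a_2,b_2}+a^{(12)}\partial_t+b^{(12)}$. This gives
\begin{align*}
\mathcal L_{a_2,b_2}w^{(12)}=-a^{(12)}\partial_t w^{(1)}-b^{(12)}w^{(1)}-q_1\big[v^{(1)}\big]^2+q_2\big[v^{(2)}\big]^2 .
\end{align*}
The boundary and initial data of $w^{(12)}$ vanish, i.e. $w^{(12)}=0$ on $\Sigma$ and $w^{(12)}(0,\cdot)=\partial_tw^{(12)}(0,\cdot)=0$.

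Next, we split the source term algebraically as
\begin{align*}
-q_1\big[v^{(1)}\big]^2+q_2\big[v^{(2)}\big]^2=-q^{(12)}\big[v^{(1)}\big]^2-q_2\big(v^{(1)}-v^{(2)}\big)\big(v^{(1)}+v^{(2)}\big).
\end{align*}
This isolates the unknown difference $q^{(12)}$ against the square of a single first-order solution. The remaining piece involves only $v^{(1)}-v^{(2)}$, which is already controlled by Lemma~\ref{lem:linearized-DN-estimate}.

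We then multiply the equation by $v_0$ and integrate over $\Omega_T$. We repeat verbatim the three integrations by parts \eqref{eqn:idt_1}, \eqref{eqn:idt_2} and \eqref{eqn:idt_3}, with $v^{(12)}$ replaced by $w^{(12)}$. The time boundary terms vanish because of the zero initial data of $w^{(12)}$ and the zero final data of $v_0$. The term $w^{(12)}\partial_\nu v_0$ on $\Sigma$ vanishes because $w^{(12)}|_\Sigma=0$. Since $\mathcal L^{\ast}_{a_2,b_2}v_0=0$, the left-hand side reduces to $-\int_\Sigma \partial_\nu w^{(12)}\,v_0\,\mathrm dS_x\,\mathrm dt$. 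Moving the $q^{(12)}$ term to one side and everything else to the other then yields \eqref{eq:q-identity} with the stated signs.

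There is no real obstacle. The only care needed is regularity: $v^{(j)}\in\mathcal E_{m+1}$, and by the algebra property \eqref{eq:Banach_algebra} the sources lie in $\mathcal E_m$, so that $w^{(j)}$ is regular enough to justify Green's formula and the traces $\partial_\nu w^{(j)}$. The other point is bookkeeping of the signs when the terms are rearranged.
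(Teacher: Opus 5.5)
Your proposal is correct and follows the paper's proof essentially verbatim. You subtract the two $w$-equations to get an IBVP for $w^{(1)}-w^{(2)}$ with operator $\mathcal L_{a_2,b_2}$, pair it with $v_0$, integrate by parts exactly as in Lemma~\ref{lem:identity1}, and split $q_1[v^{(1)}]^2-q_2[v^{(2)}]^2$ algebraically; you split before integrating and the paper after, which is immaterial, and your signs check out. One aside that does not affect the identity: Lemma~\ref{lem:linearized-DN-estimate} only bounds $\partial_\nu(v^{(1)}-v^{(2)})$ on $\Sigma$, so the interior term $q_2(v^{(1)}-v^{(2)})(v^{(1)}+v^{(2)})v_0$ is controlled later through the energy estimate \eqref{eq:V-energy} in terms of $\|a^{(12)}\|_{L^\infty(\Omega)}+\|b^{(12)}\|_{L^\infty(\Omega)}$, not by that lemma.
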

\begin{proof}
    Observe that $w^{(1)}-w^{(2)}$ solves the following IBVP
\begin{equation}\label{eq:difference-w}
  \begin{cases}
    \left(\Box+a_{2}\partial_{t}+b_{2}\right)\left(w^{(1)}-w^{(2)}\right)
    =-a^{(12)}\partial_{t}w^{(1)}
    -b^{(12)}w^{(1)}
    -\left(q_{1}\left[v^{(1)}\right]^2-q_{2}\left[v^{(2)}\right]^2\right),
    &\text{in }\Omega_{T},\\[4pt]
    \left(w^{(1)}-w^{(2)}\right)=0, &\text{on }\Sigma,\\[2pt]
    \left(w^{(1)}-w^{(2)}\right)(0,\cdot)=\partial_{t}\left(w^{(1)}-w^{(2)}\right)(0,\cdot)=0, &\text{in }\Omega.
  \end{cases}
\end{equation}
Multiplying the governing equation of the above IBVP \eqref{eq:difference-w} by $v_{0}$,
integrating over $\Omega_{T}$, and proceeding exactly as in the proof of
Lemma~\ref{lem:identity1}, we obtain
\begin{align}\label{eq:integrated-difference}
       \int_{\Omega_{T}}a^{(12)}\partial_{t}w^{(1)}v_{0}\,\mathrm dx\,\mathrm dt
      +\int_{\Omega_{T}}b^{(12)}w^{(1)}v_{0}\,\mathrm dx\,\mathrm dt
      &+\int_{\Omega_{T}}\big(q_{1}[v^{(1)}]^2-q_{2}[v^{(2)}]^2\big)v_{0}\,\mathrm dx\,\mathrm dt
     \\& = \int_{\Sigma}\partial_{\nu}\big(w^{(1)}-w^{(2)}\big)\,v_{0}\,\mathrm dS_{x}\,\mathrm dt.
\end{align}
Finally, combining the above identity with the following algebraic identity
    \[
    q_1[v^{(1)}]^2-q_2[v^{(2)}]^2
  =q^{(12)}[v^{(1)}]^2+q_2\big([v^{(1)}]^2-[v^{(2)}]^2\big)
  =q^{(12)}[v^{(1)}]^2+q_2\big(v^{(1)}-v^{(2)}\big)\big(v^{(1)}+v^{(2)}\big),
    \] 
we get \eqref{eq:q-identity}, which completes the proof of the lemma.
\end{proof}
We next derive an estimate for the boundary term appearing in the preceding
integral identity. This will provide the boundary control needed in the
stability analysis of the nonlinear potential.
\begin{lemma}\label{lem:w-DN}
    Let $m>n+1$, $(a_{j},b_{j},q_{j})\in \mathcal{A}(M)$ for $j=1,2$, and assume $\eta<\delta^{3}$. Then, for every $f\in\mathcal{K}_{m+1}$,
    \begin{equation}\label{q:w-DN}
\bigl\|\partial_\nu\bigl(w^{(1)}-w^{(2)}\bigr)\bigr\|_{L^2(\Sigma)}
\le C\eta^{1/3}\,\|f\|^{2}_{H^{m+1}(\Sigma)} ,
\end{equation}
where $w^{(1)},w^{(2)}$ are the solutions of \eqref{eqn:w} associated
with the same Dirichlet data $f$.
\end{lemma}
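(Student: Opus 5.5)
We mirror the proof of Lemma~\ref{lem:linearized-DN-estimate}, this time isolating the second-order term of the decomposition \eqref{eq:decomposition} instead of the first-order one. The odd part of $u^{(j)}_{\varepsilon f}$ in $\varepsilon$ produces $v^{(j)}$; the even part produces $w^{(j)}$.

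Fix $f\in\mathcal K_{m+1}$ with $f\neq0$, and take $\varepsilon>0$ with $\varepsilon\|f\|_{H^{m+1}(\Sigma)}<\delta$. Then both $\pm\varepsilon f$ lie in $\mathcal D^{\delta}_{m+1}$. Lemma~\ref{lem:second-order-decomposition} gives
\[
u^{(j)}_{\varepsilon f}+u^{(j)}_{-\varepsilon f}=2\varepsilon^{2}w^{(j)}+\mathcal R^{(j)}_{\varepsilon}+\mathcal R^{(j)}_{-\varepsilon}.
\]
The linear terms cancel because $v^{(j)}$ depends linearly on the data. Take normal derivatives on $\Sigma$ and subtract the identities for $j=1$ and $j=2$. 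This yields
\[
\partial_\nu\bigl(w^{(1)}-w^{(2)}\bigr)=\frac{1}{2\varepsilon^{2}}\Bigl[(\Lambda_{a_1,b_1,q_1}-\Lambda_{a_2,b_2,q_2})(\varepsilon f)+(\Lambda_{a_1,b_1,q_1}-\Lambda_{a_2,b_2,q_2})(-\varepsilon f)-\sum_{\pm}\partial_\nu\bigl(\mathcal R^{(1)}_{\pm\varepsilon}-\mathcal R^{(2)}_{\pm\varepsilon}\bigr)\Bigr].
\]

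Next take $L^2(\Sigma)$ norms. The definition \eqref{eq:def-eta-main} bounds the two DN-map differences by $\eta$ each. For the remainders, use \eqref{Decomposition_estimate} together with $\|\cdot\|_{L^2(\Sigma)}\le C\|\cdot\|_{H^m(\Sigma)}$. Together these give
\[
\bigl\|\partial_\nu(w^{(1)}-w^{(2)})\bigr\|_{L^2(\Sigma)}\le C\Bigl(\frac{\eta}{\varepsilon^{2}}+\varepsilon\|f\|^{3}_{H^{m+1}(\Sigma)}\Bigr).
\]

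It remains to optimize in $\varepsilon$. Choose $\varepsilon=\eta^{1/3}/\|f\|_{H^{m+1}(\Sigma)}$, the same choice as in Lemma~\ref{lem:linearized-DN-estimate}. This is admissible because $\varepsilon\|f\|_{H^{m+1}(\Sigma)}=\eta^{1/3}<\delta$ by the assumption $\eta<\delta^3$. With this choice the first term equals $\eta^{1/3}\|f\|^2_{H^{m+1}(\Sigma)}$, and so does the second. This is exactly \eqref{q:w-DN}.

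Two degenerate cases need a separate word. If $f=0$, both sides vanish. If $\eta=0$, we may let $\varepsilon\to0$ in the displayed bound, which gives $\partial_\nu(w^{(1)}-w^{(2)})=0$.

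The main (mild) point to check is that the remainder bound \eqref{Decomposition_estimate} applies uniformly to both signs $\pm\varepsilon$ and controls the $L^2(\Sigma)$ norm of the normal derivative. Both follow directly from Lemma~\ref{lem:second-order-decomposition}, since that lemma allows any real $\varepsilon$ with $|\varepsilon|\,\|f\|_{H^{m+1}(\Sigma)}\le\delta$.
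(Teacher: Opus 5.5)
Your proposal is correct and follows essentially the same route as the paper. Both arguments add the decompositions \eqref{eq:decomposition} for $\pm\varepsilon$ so that the first-order terms cancel, bound the result by $C\bigl(\eta\varepsilon^{-2}+\varepsilon\|f\|^{3}_{H^{m+1}(\Sigma)}\bigr)$, and choose $\varepsilon=\eta^{1/3}/\|f\|_{H^{m+1}(\Sigma)}$. You also treat the degenerate cases $f=0$ and $\eta=0$ separately, where that choice of $\varepsilon$ breaks down; the paper omits this.
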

\begin{proof}
    From the decomposition \eqref{eq:decomposition}, we get two separate equations for $j=1$ and $j=2$, then subtracting the two equations together with the definition \eqref{eq:DN-map} of the DN maps, we get
    \begin{equation}\label{eqn:w-DN-identity}
\varepsilon^{2}\,\partial_\nu\bigl(w^{(1)}-w^{(2)}\bigr)
=\bigl(\Lambda_{a_1,b_1,q_1}-\Lambda_{a_2,b_2,q_2}\bigr)(\varepsilon f)
-\varepsilon\,\partial_\nu\bigl(v^{(1)}-v^{(2)}\bigr)
-\partial_\nu\bigl(\mathcal R^{(1)}_{\varepsilon}
-\mathcal R^{(2)}_{\varepsilon}\bigr).
\end{equation}
Replacing $\varepsilon f$  by $-\varepsilon f$ and adding the two identities, we obtain
 \begin{equation}\label{q:w-DN-identity}
 \begin{split}
2\varepsilon^{2}\,\partial_\nu\bigl(w^{(1)}-w^{(2)}\bigr)
&=\bigl(\Lambda_{a_1,b_1,q_1}-\Lambda_{a_2,b_2,q_2}\bigr)(\varepsilon f)+ \bigl(\Lambda_{a_1,b_1,q_1}-\Lambda_{a_2,b_2,q_2}\bigr)(-\varepsilon f)\\&\qquad \qquad
-\partial_\nu\bigl(\mathcal R^{(1)}_{\varepsilon}+\mathcal R^{(1)}_{-\varepsilon}
-\mathcal R^{(2)}_{\varepsilon}-\mathcal R^{(2)}_{-\varepsilon}\bigr).
 \end{split}
\end{equation}
Using $\varepsilon f\in \mathcal{D}^{\delta}_{m+1}$
together with \eqref{eq:def-eta-main} and
\eqref{Decomposition_estimate}, we obtain
\begin{equation}\label{q:w-DN-three-terms}
\bigl\|\partial_\nu\bigl(w^{(1)}-w^{(2)}\bigr)\bigr\|_{L^2(\Sigma)}
\le
C\Bigl(\frac{\eta}{\varepsilon^{2}}
+\varepsilon\|f\|_{H^{m+1}(\Sigma)}^{3}\Bigr) .
\end{equation}
Choosing $\varepsilon=\eta^{1/3}/\|f\|_{H^{m+1}(\Sigma)}$ makes the two terms
equal to $\eta^{1/3}\|f\|^2_{H^{m+1}(\Sigma)}$, and the choice is
valid because $|\varepsilon|\,\|f\|_{H^{m+1}(\Sigma)}=\eta^{1/3}<\delta$,
which is the hypothesis $\eta<\delta^3$. This proves \eqref{q:w-DN}, which completes the proof of the lemma.
\end{proof}
We next estimate the terms appearing on the right-hand side of the integral
identity~\eqref{eq:q-identity}. For this purpose, we first collect
several auxiliary bounds for the amplitudes and the corresponding solutions.

From the  construction of $m_{k}$, $\widetilde{m}_k$ in \eqref{eqn:m0-explicit},  \eqref{eqn:mk-explicit}, \eqref{eq:tildem0-explicit}, and \eqref{eq:tildemk-explicit}, we have
\begin{equation}\label{eq:mk-bounds}
\|m_k\|_{\mathcal E_{m+2}}+\|\widetilde m_k\|_{\mathcal E_{m+2}}
\le C\|\varphi\|_{H^{m+2+2k}(\mathbb R^n)},\qquad 0\le k\le N.
\end{equation}
Since $K>3m+4+\frac n2$, the Sobolev embedding
$H^{K}(\Omega)\hookrightarrow C^{3m+4}(\overline{\Omega})$ gives
$\|a\|_{C^{3m+4}(\overline{\Omega})}+\|b\|_{C^{3m+4}(\overline{\Omega})}\le CM$, and each step of the
recursions (2.9) and (2.13) applies the second-order operator $L_{a,b}$ once,
so with $N=m+1$ the constant in (5.4) depends only on $M$. Since $m>n+1$ the Sobolev embedding
$\mathcal E_{m+2}\hookrightarrow L^\infty(\Omega_T)$ implies
\begin{equation}
    \|m_k\|_{L^{\infty}(\Omega_{T})}\leq\|m_k\|_{\mathcal{E}_{m+2}}\leq C\|\varphi\|_{H^{m+2+2k}(\mathbb{R}^{n})},\qquad 0\le k\le N.
\end{equation}
Using the asymptotic representation
\eqref{eqn: IVP geom opt sol 1} and applying the
remainder estimate~\eqref{eq: r__{(2)}}, we obtain, for every $\tau\ge1$,
\begin{align*}
\|v^{(1)}\|_{L^\infty(\Omega_T)}
\le
\left(\sum_{k=0}^{N}\tau^{-k}\right)
\max_{0\le k\le N}
\|m_k\|_{L^\infty(\Omega_T)}
+\|R\|_{L^\infty(\Omega_T)}\leq
C\|\varphi\|_{H^{m+2+2N}(\mathbb R^n)}.
\end{align*}
The same argument applied to the backward asymptotic solution $v_0$ gives
\begin{equation}\label{eq:v-w0-Linfty}
\|v^{(1)}\|_{L^\infty(\Omega_T)}
+\|v_0\|_{L^\infty(\Omega_T)}
\le
C\|\varphi\|_{H^{m+2+2N}(\mathbb R^n)}.
\end{equation}
We next estimate the second-order term $w^{(1)}$. Since $w^{(1)}$ solves
\eqref{eqn:w} with $j=1$, the energy estimate
\cite[Lemma~2.2]{bhardwaj2026reconstructionpotentialdampingcoefficients}
yields
\begin{equation}\label{eq:w1-energy}
\|w^{(1)}\|_{\mathcal E_1}
\le C\bigl\|q_1[v^{(1)}]^2\bigr\|_{L^1(0,T;L^2(\Omega))}
\le C\|q_1\|_{L^\infty(\Omega)}\|v^{(1)}\|^2_{L^\infty(\Omega_T)}
\le C\|\varphi\|_{H^{m+2+2N}(\mathbb{R}^{n})}^{2} .
\end{equation}
We shall also need an estimate for the time derivative of $v^{(1)}$.
Differentiating the expansion~\eqref{eqn: IVP geom opt sol 1}, we obtain
\[
\partial_{t}v^{(1)}= \mathrm{i}\tau e^{i\tau\,(t+x \cdot\omega)}\sum_{k=0}^{N}\tau^{-k}m_{k} +e^{i\tau\,(t+x \cdot\omega)}\sum_{k=0}^{N}\tau^{-k}\partial_{t}m_{k}+ \partial_{t}R.
\]
Combining this identity with 
\eqref{eq: r__{(2)}} and \eqref{eq:mk-bounds}, we deduce
\begin{equation}\label{eq:dt-v1-Linfty}
\|\partial_t v^{(1)}\|_{L^\infty(\Omega_T)}
\le
C\tau
\|\varphi\|_{H^{m+2+2N}(\mathbb R^n)},
\qquad
\tau\ge1.
\end{equation}
Finally, the difference $v^{(1)}-v^{(2)}$ satisfies
\eqref{eq:difference}. Applying the same energy estimate and using
\eqref{eq:v-w0-Linfty} and \eqref{eq:dt-v1-Linfty}, we obtain
\begin{equation}\label{eq:V-energy}
\begin{aligned}
\|v^{(1)}-v^{(2)}\|_{\mathcal E_1}
&\le C\bigl\|a^{(12)}\partial_tv^{(1)}+b^{(12)}v^{(1)}\bigr\|_{L^1(0,T;L^2(\Omega))}\\&
\le C\tau\left( \|a^{(12)}\|_{L^\infty(\Omega)}+\|b^{(12)}\|_{L^\infty(\Omega)} \right)\|\varphi\|_{H^{m+2+2N}(\mathbb{R}^{n})} .
\end{aligned}
\end{equation}
The estimates
\eqref{eq:v-w0-Linfty}--\eqref{eq:V-energy}
provide the bounds needed to control the lower-order terms on the
right-hand side of the integral identity~\eqref{eq:q-identity}.  

We first consider the terms involving the already recovered coefficients
$a^{(12)}$ and $b^{(12)}$. By \eqref{eq:v-w0-Linfty} and
\eqref{eq:w1-energy}, we obtain
\begin{equation}\label{eq:q-rhs-ab}
\begin{aligned}
\left|\int_{\Omega_{T}}a^{(12)}\partial_{t}w^{(1)}v_{0}\,dx\,dt
+\int_{\Omega_{T}}b^{(12)}w^{(1)}v_{0}\,dx\,dt\right|
&\leq C\left(\bigl\|a^{(12)}\bigr\|_{L^{\infty}(\Omega)}
+\bigl\|b^{(12)}\bigr\|_{L^{\infty}(\Omega)}\right)
\bigl\|w^{(1)}\bigr\|_{\mathcal{E}_{1}}\|v_{0}\|_{L^{2}(\Omega_T)}\\
&\leq C\left(\bigl\|a^{(12)}\bigr\|_{L^{\infty}(\Omega)}
+\bigl\|b^{(12)}\bigr\|_{L^{\infty}(\Omega)}\right)
\|\varphi\|^{3}_{H^{m+2+2N}(\mathbb{R}^{n})}.
\end{aligned}
\end{equation}
We next estimate the quadratic term. Using $v^{(1)}+v^{(2)}=2v^{(1)}-(v^{(1)}-v^{(2)})$ together with \eqref{eq:v-w0-Linfty} and \eqref{eq:V-energy}, we get
\begin{equation}\label{eq:q-rhs-q2}
\begin{aligned}
&\left|\int_{\Omega_{T}}q_{2}\big(v^{(1)}-v^{(2)}\big)\big(v^{(1)}+v^{(2)}\big)v_{0}\,dx\,dt\right|\\
&\qquad\leq\left|\int_{\Omega_{T}}q_{2}\big(v^{(1)}-v^{(2)}\big)2v^{(1)}v_{0}\,dx\,dt\right|
+\left|\int_{\Omega_{T}}q_{2}\big(v^{(1)}-v^{(2)}\big)^{2}v_{0}\,dx\,dt\right|\\
&\qquad\leq C\bigl\|v^{(1)}-v^{(2)}\bigr\|_{\mathcal{E}_{1}}
\|v^{(1)}\|_{L^{\infty}(\Omega_{T})}\|v_{0}\|_{L^{2}(\Omega_{T})}
+C\bigl\|v^{(1)}-v^{(2)}\bigr\|^{2}_{\mathcal{E}_{1}}\|v_{0}\|_{L^{\infty}(\Omega_T)}\\
&\qquad\leq C\tau\left(\bigl\|a^{(12)}\bigr\|_{L^{\infty}(\Omega)}
+\bigl\|b^{(12)}\bigr\|_{L^{\infty}(\Omega)}\right)
\left[1+\tau\left\{\bigl\|a^{(12)}\bigr\|_{L^{\infty}(\Omega)}
+\bigl\|b^{(12)}\bigr\|_{L^{\infty}(\Omega)}\right\}\right]
\|\varphi\|^{3}_{H^{m+2+2N}(\mathbb{R}^{n})}.
\end{aligned}
\end{equation}
Finally, for the boundary contribution, Lemma~\ref{lem:w-DN}
together with the trace estimates~\eqref{eq:go-traces} gives
\begin{equation}\label{eq:q-rhs-bdry}
\left|\int_\Sigma\partial_\nu\bigl(w^{(1)}-w^{(2)}\bigr)v_{0}\,\mathrm dS_x\,\mathrm dt\right|
\le C\eta^{1/3}\|f\|^2_{H^{m+1}(\Sigma)}\|v_{0}\|_{L^2(\Sigma)}
\le C\eta^{1/3}\tau^{2m+4}\|\varphi\|^{3}_{H^{m+2+2N}(\mathbb{R}^{n})} .
\end{equation}
We now turn to the principal term on the left-hand side of
\eqref{eq:q-identity}. To simplify the notation, we define
\[
P:=e^{\mathrm i\tau(t+x\cdot\omega)}\sum_{k=0}^N\tau^{-k}m_k
\qquad\text{ and }\qquad
Q:=e^{-2\mathrm i\tau(t+x\cdot\omega)}\sum_{k=0}^N\tau^{-k}\widetilde m_k.
\]
With this notation, the asymptotic solutions
\eqref{eqn: IVP geom opt sol 1} and
\eqref{eqn: IBVP geom opt sol 1} take the form
\[
v^{(1)}=P+R,
\qquad\text{ and }\qquad
v_0=Q+R_0.
\]
The leading contribution in the product
$\bigl(v^{(1)}\bigr)^2v_0$ is given by the amplitudes
$m_0$ and $\widetilde m_0$, whereas all remaining terms contain at least
one negative power of $\tau$ or one remainder term. Combining these
observations with the estimates
\eqref{eq:q-rhs-ab}--\eqref{eq:q-rhs-bdry}
leads to the basic weighted estimate for $q^{(12)}$ stated below.
\begin{lemma}\label{lem:q-basic}
 Let $n\ge2$ and let $\Omega\subset\mathbb R^n$ be open, connected,
and bounded with smooth boundary. Suppose that
$T>\operatorname{diam}(\Omega)$, $M>0$, $m>n+1$, and
$(a_j,b_j,q_j)\in\mathcal A(M)$ for $j=1,2$. Assume also that
$\eta<\delta^3$. Then there exists a constant $C>0$, depending only on
$\Omega$, $T$, and $M$, such that for every $\tau\ge1$,
$\omega\in\mathbb S^{n-1}$, and
$\varphi\in C_c^\infty(\mathbb R^n)$ satisfying
$\operatorname{supp}(\varphi)\cap\overline\Omega=\emptyset$,
and
$\bigl(\operatorname{supp}(\varphi)\pm T\omega\bigr)
\cap\overline\Omega=\emptyset$,
\begin{equation}\label{eq:q-after-tau}
\left|
\int_{\Omega_T}
q^{(12)}(x)\,
\varphi^3(x+t\omega)
\bigl(A_1^+(t,x)\bigr)^2
A_2^-(t,x)
\,dx\,dt
\right|
\le
C\eta^\gamma
\|\varphi\|_{H^{m+2+2N}(\mathbb R^n)}^3.
\end{equation}
\end{lemma}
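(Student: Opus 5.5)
We insert the asymptotic solutions into the identity \eqref{eq:q-identity}. Take $v^{(1)}=P+R$ from Lemma~\ref{Asymptotic solutions for IBVP} with $(a,b)=(a_1,b_1)$, $N=m+1$, and $v_0=Q+R_0$ from Lemma~\ref{Asymptotic solutions}. The phases of $P^2$ and $Q$ cancel, so
\[
P^2Q=\sum_{k=0}^{3N}\tau^{-k}c_k(t,x),\qquad c_0=m_0^2\widetilde m_0=\varphi^3(x+t\omega)\bigl(A_1^+\bigr)^2A_2^-,
\]
and the leading term is exactly the integrand in \eqref{eq:q-after-tau}. We then write
\[
\int_{\Omega_T}q^{(12)}[v^{(1)}]^2v_0
=\int_{\Omega_T}q^{(12)}c_0+\mathcal J_1+\mathcal J_2 .
\]
Here $\mathcal J_1$ collects the terms with $k\ge1$. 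By \eqref{eq:mk-bounds}, the $L^\infty$ embedding and $\|q^{(12)}\|_{L^\infty}\le 2M$, we get $|\mathcal J_1|\le C\tau^{-1}\|\varphi\|^3_{H^{m+2+2N}}$. The term $\mathcal J_2$ collects every product containing $R$ or $R_0$. Using \eqref{eq: r__{(2)}}, \eqref{eq: r_{(0)}} with $N=m+1$, we have $\|R\|_{L^\infty}+\|R_0\|_{L^\infty}\le C\tau^{-1}\|\varphi\|_{H^{m+2+2N}}$. Together with \eqref{eq:v-w0-Linfty}, this gives $|\mathcal J_2|\le C\tau^{-1}\|\varphi\|^3_{H^{m+2+2N}}$.

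Next we move everything except the $c_0$-integral to the right-hand side and bound it by \eqref{eq:q-rhs-ab}, \eqref{eq:q-rhs-q2} and \eqref{eq:q-rhs-bdry}. To control $\|a^{(12)}\|_{L^\infty}+\|b^{(12)}\|_{L^\infty}$ we use Propositions~\ref{prop-damping stability} and \ref{Prop:b-stability}. Since $\mu_a\ge\mu_b$, this sum is at most $C\eta^{\mu_b}$ for $\eta\le1$. Collecting terms gives, for all $\tau\ge1$,
\[
\Bigl|\int_{\Omega_T}q^{(12)}c_0\Bigr|\le C\Bigl(\tau^{-1}+\tau\eta^{\mu_b}+\tau^2\eta^{2\mu_b}+\eta^{1/3}\tau^{2m+4}\Bigr)\|\varphi\|^3_{H^{m+2+2N}}.
\]

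It remains to choose $\tau$. Setting $\tau=\eta^{-\mu_b/2}$ (which is $\ge1$ for $\eta\le1$) turns the first three terms into $C\eta^{\mu_b/2}=C\eta^{\gamma}$. The boundary term then equals $\eta^{1/3-(m+2)\mu_b}$. We must check that $(m+2)\mu_b+\gamma\le 1/3$. This holds because $\mu_b\le\frac{1}{3(m+3)(2m+5)^2}$, so $(m+2.5)\mu_b$ is far below $1/3$; hence this term is also at most $\eta^\gamma$. For $\eta\ge\min\{1,\delta^3\}$ the estimate is trivial, since the left side is bounded by $C\|\varphi\|^3_{L^3}\le C\|\varphi\|^3_{H^{m+2+2N}}$; we adjust $C$ as in Section~\ref{sec:identities}.

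\textbf{Main obstacle.} The hardest step is the careful bound of $\mathcal J_2$ and of the quadratic term \eqref{eq:q-rhs-q2}. This requires the $\tau^{m-N}=\tau^{-1}$ remainder decay in $W^{1,\infty}$ and the uniform (in $\tau$) $L^\infty$ bounds on $m_k$, $\widetilde m_k$. It also depends on the $\mathcal E_1$-to-$L^2$ pairing in \eqref{eq:q-rhs-ab}: there $\partial_tw^{(1)}$ is only controlled in $L^2$, so we pair it with $\|v_0\|_{L^2}$ instead of $L^\infty$.
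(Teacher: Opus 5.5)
Your proposal is correct and follows essentially the same route as the paper. You expand $(v^{(1)})^2v_0$ with $v^{(1)}=P+R$ and $v_0=Q+R_0$, isolate $m_0^2\widetilde m_0=\varphi^3(x+t\omega)(A_1^+)^2A_2^-$, bound every other contribution by $C\tau^{-1}\|\varphi\|^3_{H^{m+2+2N}(\mathbb R^n)}$, control the rest of the identity by $C(\tau\eta^{\mu_b}+\tau^2\eta^{2\mu_b}+\eta^{1/3}\tau^{2m+4})\|\varphi\|^3_{H^{m+2+2N}(\mathbb R^n)}$, and take $\tau=\eta^{-\gamma}$ with $\gamma=\mu_b/2$, where your condition $(m+\tfrac52)\mu_b\le\tfrac13$ is exactly the paper's $(2m+5)\gamma\le\tfrac13$.
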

\begin{proof}
    We begin by expanding the product $\bigl(v^{(1)}\bigr)^2v_0$ using $v^{(1)}=P+R$ and $v_0=Q+R_0$. Thus, we have  
    \begin{equation}\label{eq:q-expansion}
    \bigl(v^{(1)}\bigr)^2v_0 = P^2Q+2PRQ+R^2Q+P^2R_0+2PRR_0+R^2R_0. \end{equation}
    From the bounds for the amplitudes~\eqref{eq:v-w0-Linfty}, we have
   \[
\|P\|_{L^\infty(\Omega_T)}
+\|Q\|_{L^\infty(\Omega_T)}
\le
C\|\varphi\|_{H^{m+2+2N}(\mathbb R^n)},
\]
whereas \eqref{eq: r__{(2)}} and \eqref{eq: r_{(0)}}, together with $N=m+1$ give
\[
\|R\|_{L^\infty(\Omega_T)}
+\|R_0\|_{L^\infty(\Omega_T)}
\le
C\tau^{-1}
\|\varphi\|_{H^{m+2+2N}(\mathbb R^n)}.
\]
Consequently, every term on the right-hand side of
\eqref{eq:q-expansion}, except $P^2Q$, is bounded in
$L^\infty(\Omega_T)$ by
$C\tau^{-1}
\|\varphi\|_{H^{m+2+2N}(\mathbb R^n)}^3$.
We next consider the principal term. By the definitions of $P$ and $Q$,
\[
P^2Q
=
\left(
\sum_{k=0}^N\tau^{-k}m_k
\right)^2
\left(
\sum_{k=0}^N\tau^{-k}\widetilde m_k
\right),
\]
since the oscillatory factors cancel. Its leading contribution is therefore
$m_0^2\widetilde m_0$, while all the remaining terms contain at least one
negative power of $\tau$. Using the expressions for $m_0$ and
$\widetilde m_0$, we obtain
\[
m_0^2\widetilde m_0
=
\varphi^3(x+t\omega)
\bigl(A_1^+(t,x)\bigr)^2A_2^-(t,x).
\]
Hence, we have
\begin{equation}\label{eq:q-principal}
\bigl[v^{(1)}\bigr]^2v_0
=
\varphi^3(x+t\omega)
\bigl(A_1^+(t,x)\bigr)^2A_2^-(t,x)
+\mathcal R_q,
\end{equation}
where
\begin{equation}\label{eq:Rq-bound}
\|\mathcal R_q\|_{L^\infty(\Omega_T)}
\le
C\tau^{-1}
\|\varphi\|_{H^{m+2+2N}(\mathbb R^n)}^3.
\end{equation}
Combining the integral identity~\eqref{eq:q-identity} with
\eqref{eq:q-rhs-ab}, \eqref{eq:q-rhs-q2},
\eqref{eq:q-rhs-bdry}, and \eqref{eq:q-principal}, and using the
a priori bound $\|q^{(12)}\|_{L^\infty(\Omega)}\le CM$, we obtain
\begin{equation}\label{eq:q-weighted}
\begin{aligned}
\left|
\int_{\Omega_T}
q^{(12)}(x)\varphi^3(x+t\omega)
\bigl(A_1^+(t,x)\bigr)^2A_2^-(t,x)
\,dx\,dt
\right|
\le
C\left(
\frac1\tau
+\tau\zeta
+\tau^2\zeta^2
+\eta^{1/3}\tau^{2m+2}
\right)
\|\varphi\|_{H^{m+2+2N}(\mathbb R^n)}^3,
\end{aligned}
\end{equation}
where $\zeta:= \|a^{(12)}\|_{L^\infty(\Omega)} + \|b^{(12)}\|_{L^\infty(\Omega)} $. By the stability estimates obtained in the previous sections,
$\zeta\le C\eta^{\mu_b},$ for $0<\eta\leq 1$,
since $\mu_b<\mu_a$. Therefore, we have
\begin{equation}\label{eq:q-weighted-eta}
\begin{aligned}
\left|
\int_{\Omega_T}
q^{(12)}(x)\varphi^3(x+t\omega)
\bigl(A_1^+\bigr)^2A_2^-
\,dx\,dt
\right|
\le
C\left(
\tau^{-1}
+\eta^{\mu_b}\tau
+\eta^{2\mu_b}\tau^2
+\eta^{1/3}\tau^{2m+4}
\right)
\|\varphi\|_{H^{m+2+2N}(\mathbb R^n)}^3.
\end{aligned}
\end{equation}
Choose
$\tau=\eta^{-\gamma}.$
For $0<\eta<1$, this gives
$\tau^{-1}=\eta^\gamma,$
while
$\eta^{\mu_b}\tau
=
\eta^{\mu_b-\gamma}
\le \eta^\gamma,$
provided $2\gamma\le\mu_b$. Similarly,
$\eta^{2\mu_b}\tau^2
=
\eta^{2\mu_b-2\gamma}
\le \eta^\gamma,$
and
$\eta^{1/3}\tau^{2m+4}
=
\eta^{\,1/3-(2m+4)\gamma}
\le \eta^\gamma,$
since
\[(2m+5)\gamma= \frac{(2m+5)\mu_b}{2}\leq \frac{1}{3}\]
Consequently, we obtain \eqref{eq:q-after-tau}. Finally, the explicit value of $\mu_b$ gives $\dfrac{\mu_b}{2}\leq \dfrac{1}{3(2m+3)}$, so that $\gamma=\dfrac{\mu_b}{2}$, which completes the proof.
\end{proof}
We next localize the weighted estimate obtained above to derive a pointwise
bound for an attenuated half-ray transform of $q^{(12)}$. This estimate will
be the starting point for the final stability argument for the nonlinear
potential. The precise result
is stated in the following lemma.
\begin{lemma}\label{lemma:::id1-GO_3}
Let $m>n+1$ and $(a_j,b_j,q_j)\in\mathcal A(M)$ for $j=1,2$.
Assume that $T>\operatorname{diam}(\Omega)$ and $\eta<\delta^3$.
Then there exists a constant $C>0$, depending only on $\Omega$, $T$,
and $M$, such that
    \begin{equation}\label{eq::known-half-attenuated-q}
   \left| \int_0^\infty
    q^{(12)}(y-t\omega)\exp\left(-\frac12\int_0^ta_1(y-\rho\omega)\,d\rho\right)\,dt\right| \le C\eta^{\nu_1},
\end{equation} 
for every $y\in\mathbb{R}^n$ and $\omega\in\mathbb S^{n-1}$, where $ \nu_1 = \dfrac{\mu_b}{18m+26-n} $ and 
$\mu_b>0$
is the exponent determined in the stability of linear potential in the preceding section.
\end{lemma}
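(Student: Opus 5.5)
Starting from Lemma~\ref{lem:q-basic}, I would repeat the localization scheme used in the proof of Lemma~\ref{lemma:::id1-GO_2}, with one extra step: the attenuation weight $A_2^-$ must be replaced by $A_1^-$ at a cost controlled by Proposition~\ref{prop-damping stability}.

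\textbf{Step 1: change of variables and simplification of the weight.} Extend all coefficients by zero and substitute $y=x+t\omega$ in \eqref{eq:q-after-tau}. As in \eqref{eq:exponential-averaged}, after the substitution $\rho=t-s$ the weight $(A_1^+)^2A_2^-$ becomes
\[
\exp\Bigl(-\int_0^t a_1(y-\rho\omega)\,d\rho+\tfrac12\int_0^t a_2(y-\rho\omega)\,d\rho\Bigr).
\]
I would rewrite this as $\exp\bigl(-\frac12\int_0^t a_1(y-\rho\omega)\,d\rho\bigr)$ times $\exp\bigl(-\frac12\int_0^t a^{(12)}(y-\rho\omega)\,d\rho\bigr)$. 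The second factor differs from $1$ by at most $CT\|a^{(12)}\|_{L^\infty}\le C\eta^{\mu_a}$, by the Lipschitz argument of \eqref{eq:b-weight-removal}. Since $\|q^{(12)}\|_{L^\infty}\le 2M$ and $\|\varphi\|_{L^3}$ is controlled by $\|\varphi\|_{H^{m+2+2N}}$, this error is $\le C\eta^{\mu_a}\|\varphi\|^3_{H^{m+2+2N}}$. Because $\mu_a\ge\gamma$, it is absorbed into the $\eta^\gamma$ bound. The result is
\[
\Bigl|\int_{\R^n}\varphi^3(y)\,G(y)\,dy\Bigr|\le C\eta^\gamma\|\varphi\|^3_{H^{m+2+2N}},
\qquad
G(y):=\int_0^T q^{(12)}(y-t\omega)\,e^{-\frac12\int_0^t a_1(y-\rho\omega)d\rho}\,dt .
\]

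\textbf{Step 2: localization.} Take $y_0\in\Omega_r$ and choose $\varphi=\psi_h$ with $\psi_h(y)=h^{-n/3}\chi((y-y_0)/h)$, normalized so that $\int\psi_h^3=1$. Then $\|\psi_h\|_{H^{k}}\le Ch^{-k+n/2-n/3}=Ch^{-k+n/6}$ with $k=m+2+2N=3m+4$. The support conditions follow exactly as in \eqref{eq:supp-cond-1}--\eqref{eq:supp-cond-2}.

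$G$ is Lipschitz in $y$ with constant depending only on $M,T$, via Sobolev embedding for $q^{(12)}$ and $a_1$. Hence $|G(y_0)|\le Ch+C\eta^\gamma h^{-3(3m+4)+n/2}$. Since $3(3m+4)-n/2=9m+12-n/2=\kappa$, balancing gives $h=\eta^{\gamma/(\kappa+1)}$, so
\[
|G(y_0)|\le C\eta^{\gamma/(\kappa+1)}=C\eta^{\mu_b/(2\kappa+2)}=C\eta^{\mu_b/(18m+26-n)}=C\eta^{\nu_1}.
\]
This choice requires $h<r/4$, i.e.\ $\eta$ small. For larger $\eta$ (and for $\eta\ge\delta^3$) the bound holds trivially by enlarging $C$, as in \eqref{eq:ray-transform-a-eta_2}.

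\textbf{Step 3: extension to all $y\in\R^n$.} For $y_0\in\Omega_r$ and $t\ge T$ we have $y_0-t\omega\notin\overline\Omega$, so the integral over $[0,T]$ equals the integral over $[0,\infty)$.

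For general $y\in\R^n$, pick a point on the backward line. If the line $\{y-t\omega\}$ misses $\Omega$, the integral vanishes. If $y$ lies beyond $\Omega$, i.e.\ the half-ray has not yet met $\Omega$, take $y'=y-s\omega\in\Omega_r$ on the entry side; then the integrals starting at $y$ and at $y'$ coincide, since $a_1=q^{(12)}=0$ along the segment between them.

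\textbf{Main obstacle.} The delicate case is when $y$ lies inside $\Omega$ or past the entry region. There the half-ray integral is genuinely a partial attenuated integral and cannot be reached by translating to $\Omega_r$.

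I would first try to handle it by running the localization at base points $y_0$ inside $\Omega$. These are not admissible for Lemma~\ref{lem:q-basic}, however, since $\varphi$ must be supported outside $\overline\Omega$. The most likely fix is to interpret the claim for $y$ such that the relevant half-line starts outside $\overline\Omega$, which is what is needed downstream, and then to use the translation invariance along lines together with vanishing of the coefficients outside $\Omega$. If the full statement is genuinely required, I would instead obtain it from the estimate on $\Omega_r$ in both directions $\pm\omega$, via the attenuated ray transform identity, in the next step of the paper.
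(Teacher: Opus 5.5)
Your Steps 1 and 2, and the exterior part of Step 3, are the paper's argument essentially verbatim. The weight $(A_1^+)^2A_2^-$ is reduced to $\exp\bigl(-\frac12\int_0^t a_1(y-\rho\omega)\,d\rho\bigr)$ at cost $C\eta^{\mu_a}\le C\eta^{\gamma}$. The cubic normalization $h^{-n/3}$ gives $\|\chi_h\|^3_{H^{3m+4}}\le Ch^{-\kappa}$, and the choice $h=\eta^{\gamma/(\kappa+1)}$ yields $\nu_1$ on $\Omega_r$. (A small point: if $\operatorname{dist}(y,\Omega)\le r/2$ you may have to translate in the $+\omega$ direction to reach $\Omega_r$, which is harmless as long as the connecting segment avoids $\overline\Omega$.)

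The gap is the case you flag yourself: points $y$ for which both half-lines $\{y-t\omega\}_{t>0}$ and $\{y+t\omega\}_{t>0}$ meet $\Omega$. This covers every $y\in\Omega$, and also exterior points lying between two pieces of $\Omega$ on their line when $\Omega$ is not convex. There $\Psi(y,\omega)$ integrates only part of the chord, with attenuation normalized at $y$. It is therefore not a translate of any quantity available on $\Omega_r$, and your proposal leaves the lemma unproved for these $y$. The paper does not supply this step either. It asserts the extension to all of $\mathbb{R}^n$ by ``repeating the argument'' of Lemma~\ref{lemma:::id1-GO_2}. That argument, however, rested on the invariance of the full-line integral $\int_{\mathbb R}a^{(12)}(z+s\omega)\,ds$ under $z\mapsto z+t\omega$, which has no analogue for attenuated half-ray integrals.

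Neither of your fallbacks closes the gap. Restricting the claim to base points outside $\overline\Omega$ is not what is used downstream. The proof of Proposition~\ref{prop:q-stability} applies the Landau--Kolmogorov inequality to $\|\Psi\|_{L^\infty(\mathbb{R}^n)}$ and reads off $q^{(12)}=\omega\cdot\nabla_y\Psi+\frac12a_1\Psi$ at points of $\Omega$. That step is vacuous if $\Psi$ is controlled only outside $\overline\Omega$, and for the same reason, using this transport identity to produce the interior bound would be circular.

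Combining the exterior bounds for $\pm\omega$ does not help either, since they only control the full attenuated X-ray transform of $q^{(12)}$ along each line. Already for $a_1=0$, smallness of $\Psi(y,\omega)+\Psi(y,-\omega)=\int_{\mathbb R}q^{(12)}(y+s\omega)\,ds$ says nothing about the two half-ray integrals separately.

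What is genuinely missing is a quantitative stability estimate for inverting the attenuated X-ray transform with known attenuation $\frac12a_1$, followed by interpolation against the $H^K$ bound. Such an estimate could come, for instance, from Novikov's inversion formula in the plane together with slicing by two-planes when $n\ge3$. That route makes $\|q^{(12)}\|_{L^\infty(\Omega)}$ small, hence $|\Psi(y,\omega)|\le C\|q^{(12)}\|_{L^\infty(\Omega)}$ for every $y$. The exponent, however, is degraded by the inversion and interpolation rather than equal to $\nu_1$. Along that route the interior part of this lemma is no longer needed at all.
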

\begin{proof}
We begin with the weighted estimate obtained in
Lemma~\ref{lem:q-basic}. Introducing the change of variables $y=x+t\omega$ and extending $q^{(12)}$ by zero, then the above estimates \eqref{eq:q-after-tau} becomes
\begin{equation}\label{eq:q-after-change}
\left|\int_0^T\!\!\int_{\mathbb R^n}q^{(12)}(y-t\omega)
\bigl(A_1^{+}(t,y-t\omega)\bigr)^2A_2^{-}(t,y-t\omega)\,\varphi^3(y)
\,\mathrm dy\,\mathrm dt\right|\le C\eta^{\gamma}\|\varphi\|^{3}_{H^{m+2+2N}(\mathbb{R}^{n})} .
\end{equation}
Observe that $A_1^{+}A_2^{-}=\exp\Bigl(-\frac12\int_0^ta^{(12)}(y-\rho\omega)\mathrm d\rho\Bigr)$ together with
$A_1^{+}=A_1^{+}\bigl(1-A_1^{+}A_2^{-}\bigr)+\bigl(A_1^{+}\bigr)^2A_2^{-}$ and
triangle inequality, we get
\begin{align*}
&\left|\int_0^T\!\!\int_{\mathbb R^n}q^{(12)}(y-t\omega)\,
A_1^{+}(t,y-t\omega)\,\varphi^3(y)\,\mathrm dy\,\mathrm dt\right|\\
&\le\left|\int_0^T\!\!\int_{\mathbb R^n}q^{(12)}(y-t\omega)\,
A_1^{+}(t,y-t\omega)\bigl(1-\left(A_1^{+}A_2^{-}\right)(t,y-t\omega)\bigr)\varphi^3(y)\,\mathrm dy\,\mathrm dt\right|\\
&\qquad+\left|\int_0^T\!\!\int_{\mathbb R^n}q^{(12)}(y-t\omega)\,
\bigl(A_1^{+}(t,y-t\omega)\bigr)^2A_2^{-}(t,y-t\omega)\,\varphi^3(y)
\,\mathrm dy\,\mathrm dt\right| .
\end{align*}
Next, we simplify the right hand side of the above estimate. For the second
term, we use the estimate \eqref{eq:q-after-change}, and for the first
term, we first observe that
\begin{equation}\label{eq:A1A2-close-to-one}
\left|1-\left(A_1^{+}A_2^{-}\right)(t,y-t\omega)\right|
\le C\left|\int_0^ta^{(12)}(y-\rho\omega)\,\mathrm d\rho\right|
\le CT\|a^{(12)}\|_{L^\infty(\mathbb R^n)}\le C\eta^{\mu_a}\le C\eta^{\gamma},
\end{equation}
in the last step, we use $\gamma<\mu_b<\mu_a$ for $\eta\leq 1$. Now using the preceding
observation together with $\|A_1^{+}\|_{L^{\infty}(\Omega_T)}\le
e^{MT/2}$ and $\|q^{(12)}\|_{L^{\infty}(\mathbb R^n)}\le CM$, we can write
the first term as
\begin{align*}
&\left|\int_0^T\!\!\int_{\mathbb R^n}q^{(12)}(y-t\omega)\,
A_1^{+}(t,y-t\omega)\bigl(1-\left(A_1^{+}A_2^{-}\right)(t,y-t\omega)\bigr)\varphi^3(y)\,\mathrm dy\,\mathrm dt\right|\\
&\le C\eta^{\gamma}\,T\,\|q^{(12)}\|_{L^{\infty}(\mathbb R^n)}
e^{MT/2}\,\|\varphi\|^3_{L^{3}(\mathbb R^n)}
\le C\eta^{\gamma}\|\varphi\|^{3}_{H^{m+2+2N}(\mathbb{R}^{n})},
\end{align*}
where the last step follows from the Sobolev embedding
$H^{m+2+2N}(\mathbb R^n)\hookrightarrow L^3(\mathbb R^n)$. Therefore,
combining the preceding estimates
\begin{equation}\label{eq:q-A1-weighted}
\left|\int_0^T\!\!\int_{\mathbb R^n}q^{(12)}(y-t\omega)\,
\exp\Bigl(-\frac12\int_0^ta_1(y-\rho\omega)\,\mathrm d\rho\Bigr)
\varphi^3(y)\,\mathrm dy\,\mathrm dt\right|\le C\eta^{\gamma} \|\varphi\|^{3}_{H^{m+2+2N}(\mathbb{R}^{n})}.
\end{equation}
For a fixed $\omega\in\mathbb S^{n-1}$, define
\begin{equation}\label{eq:def-Psi-q}
\Psi(y,\omega):=\int_0^T q^{(12)}(y-t\omega)\exp\left(-\frac12\int_0^ta_1(y-\rho\omega)\,d\rho\right)\,dt.
\end{equation}
Then the estimate \eqref{eq:q-A1-weighted} can be written as
\begin{equation}\label{eq:Psi-q-distribution}
\left|
\int_{\mathbb R^n}
\Psi(y,\omega)\varphi^3(y)\,dy
\right|
\leq
C\eta^\gamma
\|\varphi\|_{H^{m+2+2N}(\mathbb R^n)}^3.
\end{equation}
Next, we localize the preceding estimate. Fix
$\chi\in C_c^\infty(B_1(0))$ with $0\le\chi\le1$ and
$\int_{\mathbb R^n}\chi^3\,dx=1$, and for $y_0\in\Omega_r$ and $0<h< \frac{r}{4}$ set
\begin{equation}\label{eq:def-chi-h-3}
\chi_h(y):=h^{-n/3}\chi\Bigl(\frac{y-y_0}{h}\Bigr)
\qquad\text{so that}\qquad
\supp\chi_h\subset B_h(y_0),\qquad\text{ and }\quad\int_{\mathbb R^n}\chi_h^3\,dx=1 .
\end{equation}
The support of $\chi_h$ is the same as in \eqref{Phi_delta} and $\|\chi_h\|^3_{H^{3m+4}(\mathbb R^n)}\le Ch^{-\left( 9m+12-\frac n2\right)}$ , so choosing $\varphi=\chi_{h}$, we get
\begin{equation}\label{eq:loc-split}
\Psi(y_0,\omega)=\int_{\mathbb R^n}\Psi(y,\omega)\chi_h^3(y)\,\mathrm dy
+\int_{\mathbb R^n}\bigl(\Psi(y_0,\omega)-\Psi(y,\omega)\bigr)\chi_h^3(y)\,\mathrm dy ,
\end{equation}
for $y_{0}\in \Omega_{r}$. The first integral is estimated by \eqref{eq:Psi-q-distribution}. For the second, the mean value theorem gives
\begin{equation}\label{eq:loc-second}
\bigl|\Psi(y_0,\omega)-\Psi(y,\omega)\bigr|
\le\|\nabla_y\Psi(\cdot,\omega)\|_{L^\infty(\mathbb R^n)}\,|y-y_0|\le Ch
\qquad\text{for }y\in\supp\chi_h\subset B_h(y_0),
\end{equation}
so that second integral is at most $Ch\int_{\mathbb R^n}\chi_h^3=Ch$.
Substituting \eqref{eq:Psi-q-distribution} and \eqref{eq:loc-second} into
\eqref{eq:loc-split}, we get
\begin{equation}\label{eq:loc-two-terms}
|\Psi(y_0,\omega)|\le C\bigl(h+\eta^{\gamma}h^{-\left( 9m+12-\frac n2\right)}\bigr)
\qquad \text{ for } 0<h<\tfrac r4 .
\end{equation}
The two terms are equal when $h^{9m+13-\frac n2}=\eta^{\gamma}$, that is for
$h=\eta^{\gamma/\left(9m+13-\frac n2\right)}=\eta^{\frac{\gamma}{\kappa+1}}$, and this choice satisfies
$h<\frac r4$ precisely because $\eta\le\eta_3:=(r/4)^{\frac{\kappa+1}{\gamma}}$. With it,
both terms equal $\eta^{\frac{\gamma}{\kappa+1}}$ and \eqref{eq:loc-two-terms} gives
\begin{equation}\label{eq:q-localized}
\left|\Psi(y_0,\omega)\right|\le C\eta^{\nu_1}
\qquad\text{ for }\qquad
\nu_1:=\frac{\gamma}{\kappa+1}=\frac{\mu_b}{18m+26-n}.
\end{equation}
From the preceding localization argument, for every
$y_0\in\Omega_r$ and $\omega\in\mathbb S^{n-1}$, we have the following estimate
\begin{equation}\label{eq:known-half-attenuated-q}
   \left| \int_0^\infty
    q^{(12)}(y_0-t\omega)\exp\left(-\frac12\int_0^ta_1(y_0-\rho\omega)\,d\rho\right)\,dt\right| \le C\eta^{\nu_1}
\end{equation}
By repeating the argument used in the stability analysis of the damping
coefficient $a$ and the linear potential $b$, we extend the preceding
estimate from the exterior region to the whole space. Hence, for every
$y\in\mathbb{R}^n$, we obtain the stability estimate for the attenuated half ray transform.
\begin{equation}\label{eq::;known-half-attenuated-q}
   \left| \int_0^\infty
    q^{(12)}(y-t\omega)\exp\left(-\frac12\int_0^ta_1(y-\rho\omega)\,d\rho\right)\,dt\right| \le C\eta^{\nu_1}
\end{equation} 
\end{proof}
We are now ready to complete the proof of
Proposition~\ref{prop:q-stability}. The estimate obtained in
Lemma~\ref{lemma:::id1-GO_3} provides control of the attenuated half-ray
integrals of $q^{(12)}$ from exterior starting points. We first use this
estimate to control the complete attenuated ray transform of $q^{(12)}$.
The stability of this transform, together with the a priori regularity of the
coefficients and Sobolev interpolation, then yields the desired
$L^\infty(\Omega)$-stability estimate.
 \begin{proof}[Proof of Proposition~\ref{prop:q-stability}]
With the preceding extension, we may regard $\Psi$ as defined for
every $y\in\mathbb{R}^n$ by
\begin{equation}\label{eq:Psi-global}
\Psi(y)
=
\int_0^\infty
q^{(12)}(y-t\omega)
\exp\left(
-\frac12
\int_0^t
a_1(y-\rho\omega)\,d\rho
\right)
\,dt,
\quad \text{for} \ 
y\in\mathbb{R}^n.
\end{equation}
Now to prove stability estimate for $q^{(12)}$,  we closely follow the analysis as used in \cite{bhardwaj2026reconstructionpotentialdampingcoefficients} along with previously established bound for $\Psi$.  To do so, we first define 
\begin{align}
G(t,y)
:=
q^{(12)}(y-t\omega) \ \text{  and  }\
H(t,y)
:=
\exp\left(
-\frac12\int_0^t a_1(y-\rho\omega)\,d\rho
\right),\, \mbox{for  $(t,y)\in (0,\infty)\times \mathbb{R}^n$},
\end{align}
so that $\Psi(y)= \int_0^\infty G(t,y)H(t,y)\,\mathrm dt$. We now differentiate $\Psi$ in the direction $\omega$ and obtain the following identity
\begin{align}
-\omega\cdot\nabla_y\Psi(y,\omega)
=
\int_0^\infty
\partial_tG(t,y)H(t,y)\,dt
-\frac12
\int_0^\infty
G(t,y)
\left(
a_1(y-t\omega)-a_1(y)
\right)
H(t,y)\,dt.
\label{eq:Psi-q-directional-derivative}
\end{align}
Integration by parts together with the fact that
$\partial_tH(t,y)
=
-\dfrac12
a_1(y-t\omega)H(t,y)$, we obtain
\begin{align}
\begin{split}\label{eq:Psi-q-IBP}
\int_0^\infty
\partial_tG(t,y)H(t,y)\,dt
&=
\left[
G(t,y)H(t,y)
\right]_{t=0}^{t=\infty}
-
\int_0^\infty
G(t,y)\partial_tH(t,y)\,dt\\
&=
-q^{(12)}(y)
+
\frac12
\int_0^\infty
G(t,y)a_1(y-t\omega)H(t,y)\,dt.
\end{split}
\end{align}
Above we have used the compact support of $q^{(12)}$ to obtain
$\lim_{t\to\infty}G(t,y)H(t,y)=0.$
Next, substituting \eqref{eq:Psi-q-IBP} into
\eqref{eq:Psi-q-directional-derivative}, the terms containing
$a_1(y-t\omega)$ cancel, and we obtain
\begin{align}
-\omega\cdot\nabla_y\Psi(y,\omega)
=
-q^{(12)}(y)
+
\frac12 a_1(y)\Psi(y,\omega).
\end{align}
Equivalently, we have
\begin{equation}\label{eq:q12-transport-correct}
q^{(12)}(y)
=
\omega\cdot\nabla_y\Psi(y,\omega)
+
\frac12a_1(y)\Psi(y,\omega).
\end{equation}

By the a priori regularity of the coefficients,
$\|\Psi(\cdot,\omega)\|_{W^{2,\infty}(\mathbb R^n)}
\leq C.$
Using the Landau-Kolmogorov inequality, we have
\[
\|\nabla\Psi\|_{L^\infty (\mathbb R^n)}
\leq
C
\|\Psi\|_{L^\infty (\mathbb R^n)}^{1/2}
\|\Psi\|_{W^{2,\infty}(\mathbb R^n)}^{1/2}
\]
therefore yields
\begin{equation}\label{eq:grad-Psi-correct}
\|\nabla\Psi\|_{L^\infty(\mathbb R^n)}
\leq
C
\eta^{\frac{\gamma}{2(\kappa+1)}}.
\end{equation}
Using \eqref{eq:q12-transport-correct} and
\eqref{eq:grad-Psi-correct}, we obtain
\begin{align}
\|q_1-q_2\|_{L^\infty(\Omega)}
&\leq
C
\eta^{\frac{\gamma}{2(\kappa+1)}}
+
C
\eta^{\frac{\gamma}{\kappa+1}}.
\end{align}
Since $0<\eta<1$, the first term dominates the second one.
Consequently, we have
\begin{align}\label{eq:q-final-Linfty-stability}
\|q_1-q_2\|_{L^\infty(\Omega)}
\leq
C
\eta^{\frac{\gamma}{2(\kappa+1)}}.
\end{align}
Thus, we have
\begin{equation}\label{eq:q-final-Linfty-stability-correct}
\|q_1-q_2\|_{L^\infty(\Omega)}
\leq
C\eta^{\mu_q},
\end{equation}
where
\begin{equation}\label{eq:def-gamma-q-correct}
\mu_q
:=
\frac{\gamma}
{2\left(3m+7+6N-\frac n2\right)}
=
\frac{\mu_b}
{2(18m+26-n)}
>0,
\end{equation}
the last inequality given by $\gamma=\dfrac{\mu_b}{2}$ and $N=m+1$, so that $\mu_q=\nu_1/2$. For $\eta>1$ the estimate follows from the a priori bounds and enlarging $C$. This proves the H\"older-type stability estimate for the nonlinear
 potential. 
 \end{proof}
\begin{proof}[Proof of Theorem~\ref{thm:main-stability}] 
The stability estimates established in the preceding three sections together
yield the simultaneous recovery of all three coefficients with H\"older-type
stability. Therefore, Theorem~\ref{thm:main-stability} follows, completing the
proof of the main result.
\end{proof}
\section*{Acknowledgments}

	 M.~Kumar acknowledges the support of PMRF (Prime Minister's Research Fellowship) from the government of India for his research.
	 M.~Vashisth work was supported by the ISIRD project 9--551/2023/IITRPR/10229 from IIT Ropar and  ARG-MATRICS grant ANRF/ARGM/2025/002368/MTR, from ANRF, Govt. of India.
     This work was partially supported by the FIST program of the Department of Science and Technology, Government of India, Reference No. SR/FST/MS-I/2018/22(C).\\

     \noindent\textbf{Data availability statement.} \ Data sharing is not applicable to this paper, as no datasets were generated or analyzed during the current study.\\
    
 \noindent\textbf{Conflict of interest.} \
The authors declared that they have no potential conflicts of interest with respect to the research, authorship, and/or publication of this paper.
	 \bibliography{math} 
	
	 \bibliographystyle{alpha}

\end{document}